\documentclass[final]{siamart1116}
\usepackage{amssymb}
\usepackage{stmaryrd}
\usepackage{mathrsfs}
\usepackage{psfrag}

\newcommand{\esf}{\mathsf{e}}
\newcommand{\ysf}{\boldsymbol{\mathsf{y}}}
\newcommand{\psf}{\mathsf{p}}
\newcommand{\wsf}{\boldsymbol{\mathsf{w}}}
\newcommand{\qsf}{\mathsf{q}}
\newcommand{\usf}{\boldsymbol{\mathsf{u}}}
\newcommand{\fsf}{\boldsymbol{\mathsf{f}}}
\newcommand{\csf}{\boldsymbol{\mathsf{c}}}
\newcommand{\vsf}{\boldsymbol{\mathsf{v}}}
\newcommand{\aasf}{\boldsymbol{\mathsf{a}}}
\newcommand{\bbsf}{\boldsymbol{\mathsf{b}}}
\newcommand{\xisf}{\boldsymbol{\mathsf{\xi}}}
\newcommand{\zetasf}{\boldsymbol{\mathsf{\zeta}}}
\newcommand{\T}{\mathscr{T}}
\newcommand{\norm}[1]{{{|\!|\!|} #1 {|\!|\!|}}}
\newcommand{\normv}[1]{{{|\!|\!|} #1 {|\!|\!|}}}
\newcommand{\normp}[1]{{{|\!|\!|} #1 {|\!|\!|}}}
\newcommand{\bia}{\mathcal{A}}
\newcommand{\bib}{\mathcal{B}}
\newcommand{\bic}{\mathcal{C}}
\newcommand{\bis}{\mathcal{S}}


\newsiamremark{expl}{Example}
\newtheorem{example}{Example}
\newsiamremark{remark}{Remark}
\numberwithin{equation}{section}
\numberwithin{theorem}{section}

\headers{Error estimation for PDE--constrained optimization}{A. Allendes, E. Ot\'arola, and R. Rankin}

\title{A posteriori error estimation for finite element approximations of a PDE--constrained optimization problem in fluid dynamics\thanks{The first author's work was supported by USM grant 116.12.1. The second author's work was supported by CONICYT through FONDECYT project 3160201. 
}}

\author{Alejandro Allendes\thanks{Departamento de Matem\'atica, Universidad T\'ecnica Federico Santa Mar\'ia, Valpara\'iso, Chile.
\texttt{alejandro.allendes@usm.cl}}
\and
Enrique Ot\'arola\thanks{Departamento de Matem\'atica, Universidad T\'ecnica Federico Santa Mar\'ia, Valpara\'iso, Chile.
\texttt{enrique.otarola@usm.cl}}
\and
Richard Rankin\thanks{Departamento de Matem\'atica, Universidad T\'ecnica Federico Santa Mar\'ia, Valpara\'iso, Chile.
\texttt{richard.rankin@usm.cl}}
}

\pagestyle{myheadings}
\thispagestyle{plain}

\date{Draft version of \today.}

\begin{document}

\maketitle

\begin{abstract}
We derive globally reliable a posteriori error estimators for a PDE--constrained optimization problem involving linear models in fluid dynamics as state equation; control constraints are also considered. The corresponding local error indicators are locally efficient. The assumptions under which we perform the analysis are such that they can be satisfied for a wide variety of stabilized finite element methods as well as for standard finite element methods. When stabilized methods are considered, no a priori relation between the stabilization terms for the state and adjoint equations is required. If a lower bound for the inf--sup constant is available, a posteriori error estimators that are fully computable and provide guaranteed upper bounds on the norm of the error can be obtained. We illustrate the theory with numerical examples.
\end{abstract}

\begin{keywords}
linear--quadratic optimal control problems; generalized Oseen equations; Brinkman equations; Stokes equations; a posteriori error estimators; stabilized adaptive finite element methods.
\end{keywords}

\begin{AMS}
49K20,    
49M25,    
65K10,    
65N15,    
65N30,    
65N50,    
65Y20.    
\end{AMS}


\section{Introduction}

In this work we shall be interested in the design and analysis of computable a posteriori error estimators for a linear--quadratic optimal control problem involving linear models in fluid dynamics as state equation; control constraints are considered. To make matters precise, let $\Omega\subset\mathbb{R}^{d}$, with $d \in \{2,3\}$, be an open and bounded polytopal domain with Lipschitz boundary $\partial \Omega$ and $\fsf\in L^{2}(\Omega)^{d}$. Given a regularization parameter $\vartheta > 0$ and a desired state $\ysf_{\Omega}\in L^{2}(\Omega)^{d}$, we define
\begin{equation}\label{cost_functional}
J(\ysf,\usf) = \frac{1}{2} \|\ysf-\ysf_{\Omega}\|_{L^{2}(\Omega)^{d}}^{2} + \frac{\vartheta}{2} \|\usf\|_{L^{2}(\Omega)^{d}}^{2}.
\end{equation}
We will be interested in the following PDE--constrained optimization problem: Find
\begin{equation}
 \min J(\ysf,\usf)
 \label{eq:min}
\end{equation}
subject to the \emph{generalized Oseen equations}
\begin{equation}\label{cont_optimal_control}
\left\{
\begin{array}{rcll}
-\varepsilon\Delta\ysf + \left(\csf \cdot\nabla\right)\ysf + \kappa\ysf +\nabla \psf & = & \fsf + \usf & \textrm{in}~\Omega,\\
\nabla\cdot\ysf & = & 0 & \textrm{in}~\Omega,\\
\ysf & = & \mathbf{0} & \textrm{on}~\partial\Omega,\\
\end{array}
\right.
\end{equation}
and the control constraints
\begin{equation}\label{constraint}
\aasf\leq \usf \leq \bbsf \quad\textrm{a.e. in } \Omega,
\end{equation}
with $\aasf,\bbsf\in\mathbb{R}^{d}$ satisfying $\aasf<\bbsf$; the previous vector inequalities being understood componentwise. In \eqref{cont_optimal_control}, $\varepsilon,\kappa\in\mathbb{R}$ and are such that $\varepsilon > 0$ and $\kappa \geq 0$ and $\csf \in \mathbf{W}^{1,\infty}(\Omega)$ is a solenoidal field. The generalized Oseen equations describe the low--Reynolds--number flow in porous media in situations where velocity gradients are non--negligible; they provide a unified approach to model flows of viscous fluids in a cavity and a porous media. Our analysis allows for choices of the terms $\csf$ and $\kappa$ that yield different flow models:
\begin{equation}\label{differential_D}
\left\{
\begin{array}{cl}
\csf = \mathbf{0}, \kappa = 0&: -\varepsilon\Delta\ysf +\nabla \psf \quad (\textrm{Stokes}), \\
\csf = \mathbf{0}&: -\varepsilon\Delta\ysf + \kappa\ysf +\nabla \psf  \quad (\textrm{Brinkman}), \\
\kappa = 0&: -\varepsilon\Delta\ysf + \left(\csf\cdot\nabla\right)\ysf+\nabla \psf \quad (\textrm{Oseen}). \\
\end{array}
\right.
\end{equation}

The design of numerical techniques for approximating the solution to \eqref{cont_optimal_control} has two major difficulties: first, in view of the so--called inf--sup condition \cite{MR2050138,MR851383}, arbitrary finite element methods are not allowed, and second, considering standard finite element methods produces poor approximation results when convection--dominated regimes are considered \cite{MR2454024}. In order to overcome such difficulties, a variety of finite element techniques have been proposed and analyzed in the literature: the family of \textit{stabilized finite element methods.} We refer the reader to \cite{MR2454024} for an extensive overview.

In the PDE--constrained optimization context, a usual alternative for approximating the solution to the optimal control problem \eqref{eq:min}--\eqref{constraint} is based on the so-called \emph{optimize--then--discretize} approach. This technique discretizes the associated optimality system: the state equations \eqref{cont_optimal_control}, the adjoint equations and a variational inequality that characterizes the optimal control $\bar \usf$. Consequently, the difficulties presented in the discretization of \eqref{cont_optimal_control} are also present in the numerical approximation of the solution to \eqref{eq:min}--\eqref{constraint}. In addition, \eqref{eq:min}--\eqref{constraint} is intrinsically nonlinear and, if $\csf \neq \mathbf{0}$, presents a crosswind phenomena; the convection field of the adjoint equations is the negative of the one appearing in \eqref{cont_optimal_control}. The latter further motives the development of an efficient solution technique that, in convection--dominated regimes, properly treats the oscillatory behaviors that occur when approximating $\bar \ysf$ and its adjoint variable $\bar \wsf$ and resolves interior or boundary layers exhibited by both variables. Failure to resolve boundary layers can pollute the numerical solution in the entire domain; see \cite{MR2595051} for results involving the scalar version of  \eqref{eq:min}--\eqref{constraint}. However, numerical schemes based only on stabilized techniques are not sufficient to approximate the solution to \eqref{eq:min}--\eqref{constraint}: in addition to the efficient resolution of either interior or boundary layers, some possible geometric singularities must be resolved. This motivates the methods that we will use in this work: \emph{stabilized adaptive finite element methods.}

Adaptive finite element methods (AFEMs) are iterative methods that improve the quality of the finite element approximation to a partial differential equation (PDE) on the basis of an essential ingredient: an a posteriori error estimator. The a posteriori error analysis for the standard finite element approximation of elliptic problems has a solid foundation \cite{AObook,MR2648380,MR3059294}. When stabilized approximations are considered, several estimators have been introduced and analyzed in the literature; see, for instance, \cite{MR3057330,MR3556402,MR2398346,MR3407239,MR2871298}. However, in the PDE--constrained optimization context, the theory has not been fully developed. The main source of difficulty is its inherent nonlinear feature, which appears due to the control constraints. An attempt to unify the available results has been carried out recently in \cite{MR3212590} where the authors derive an important relationship between the error in optimal control problems and estimators, that satisfy a set of suitable assumptions, for problems associated with the state and adjoint equations  \cite[Theorem 3.2]{MR3212590}.

In the current work, the assumptions under which we perform the analysis are such that they can be satisfied for a wide variety of stabilized finite element methods as well as for standard finite element methods. This includes using a different stabilization method to approximate the state equation from that used to approximate the adjoint equation. We derive a posteriori error estimators that are globally reliable. Moreover, if a lower bound for the inf--sup constant is available, we can obtain a posteriori error estimators that are fully computable and provide guaranteed upper bounds on the norm of the error. Consequently, the estimators can be used as a stopping criterion in adaptive algorithms. The local error indicators that can be used to adaptively refine the mesh are locally efficient. Furthermore, we observe that they can be used to efficiently resolve boundary layers.

The outline of this paper is as follows. In section \ref{primeraaa} we introduce some terminology used throughout this work. In section \ref{control_optimo} we study the optimal control problem \eqref{eq:min}--\eqref{constraint} and obtain the associated optimality system. In section \ref{FEM_optimal_control} we give the general form of the finite element methods that we consider for approximating the solution to \eqref{eq:min}--\eqref{constraint}. The core of our work is section \ref{A_posteriori}, where we devise a family of a posteriori error estimators. Under suitable assumptions, we obtain abstract reliability results in section \ref{reliability} and local efficiency of the corresponding error indicators in section \ref{efficiency}. In section \ref{particular} we consider the estimators that we can obtain for a particular approximation method in more detail. Finally, in section \ref{sec:numex} we present a series of numerical examples to illustrate the theory.


\section{Preliminaries}\label{primeraaa}
\subsection{Notation}
For a bounded domain $A \subset \mathbb{R}^t$, $t \in \{1,2,3\}$, $L^{2}(A)$ and $H^{1}(A)$ denote the standard Lebesgue and Sobolev spaces, respectively; $L_{0}^{2}(A)$ is the subspace of $L^{2}(A)$ containing functions with zero mean value on $A$, and $H_{0}^{1}(A)$ is the subspace of $H^{1}(A)$ containing functions whose trace is zero on $\partial A$. We use bold letters to denote the vector--valued counterparts of the aforementioned spaces and an extra under accent for their matrix--valued counterparts. For instance, for $d \in \{ 2,3\}$, we denote $\mathbf{L}^{2}(A)=L^{2}(A)^d$ and $\underset{\approx}{\boldsymbol{L}}^{2}(A)=L^{2}(A)^{d\times d}$.

We now proceed to define notation associated with the discretization of the domain. Let $\T = \{ K \}$ be a conforming partition of $\bar \Omega$ into simplical elements $K$ \cite{MR1930132,MR2050138}. We assume that $\T$ is a member of a shape regular family of partitions. Let $\mathcal{F}$ denote the set of all element edges(2D)/faces(3D) and $\mathcal{F}_{I}\subset \mathcal{F}$ denote the set of interior edges(2D)/faces(3D).

\noindent For an element $K\in\T$, let:
\begin{itemize}
\item $\mathbb{P}_{n}(K)$ denote the space of polynomials on $K$ of total degree at most $n$;
\item $\mathcal{F}_{K}\subset \mathcal{F}$ denote the set containing the individual edges(2D)/faces(3D) of $K$;
\item $h_{K}$ denote the diameter of $K$;
\item $\boldsymbol{n}_{\gamma}^{K}$ denote the unit exterior normal vector to the edge(2D)/face(3D) $\gamma\in\mathcal{F}_{K}$.
\end{itemize}
For an edge(2D)/face(3D) $\gamma\in\mathcal{F}$, let:
\begin{itemize}
\item $\mathbb{P}_{n}(\gamma)$ denote the space of polynomials on $\gamma$ of total degree at most $n$;
\item $\Omega_{\gamma}=\{K\in\T:~\gamma\in\mathcal{F}_{K}\}$;
\item $h_{\gamma}$ denote the diameter of the edge(2D)/face(3D) $\gamma$.
\end{itemize}

To simplify the exposition of the material, we define $\mathbf{V} = \boldsymbol{H}_{0}^{1}(\Omega)$ and $Q = L_0^2(\Omega)$ with norms $\norm{\cdot}_{\mathbf{V},\Omega}$ and $\norm{\cdot}_{Q,\Omega}$ defined, for all $\xisf \in \mathbf{V}$ and $\phi \in Q$, by
\begin{equation}
 \label{eq:energy_norm_intro}
 \norm{\xisf}_{\mathbf{V},\Omega}^2:=\sum_{K\in\T}\norm{\xisf}_{\mathbf{V},K}^2
\mbox{ and }
\normp{\phi}_{Q,\Omega}^{2}:=\sum_{K\in\T}\normp{\phi}_{Q,K}^{2}
\end{equation}
where
\begin{equation}
\label{eq:local_energy_norm}
 \norm{\xisf}_{\mathbf{V},K}^2:= \varepsilon \| \nabla \xisf \|^2_{\underset{\approx}{\boldsymbol{L}}^{2}(K)} + \kappa \|\xisf \|^2_{\boldsymbol{L}^{2}(K)}\mbox{ and }\normp{\phi}_{Q,K}^{2}:=\|\phi\|_{L^{2}(K)}^{2}.
\end{equation}

The relation $a \lesssim b$ indicates that there exists a constant $C$ such that $a \leq C b$. The constant $C$ may be different at each occurence but is independent of $a$, $b$ and the size of the elements in the mesh.

\subsection{Inequalities}
For $K\in \T$ and nonnegative integers $l$, we denote by $\Pi_{K,l}$ the $\boldsymbol{L}^{2}(K)$--orthogonal projection operator onto $\mathbb{P}_{l}(K)^{d}$. This operator is defined as
\begin{equation}\label{projectionL2K}
\Pi_{K,l}:\boldsymbol{L}^{2}(K)\rightarrow\mathbb{P}_{l}(K)^{d}, \qquad
\left(\mathbf{t}-\Pi_{K,l}(\mathbf{t}),\mathbf{v}\right)_{\boldsymbol{L}^{2}(K)}=0\quad\forall \mathbf{v}\in\mathbb{P}_{l}(K)^{d}.
\end{equation}

Throughout the manuscript we will frequently make use of the following inequalities. First, if  $K \in \T$ and $\xisf\in \mathbf{V}$, we have the Poincar\'e inequalities \cite{MR2036927,MR853915,MR0117419}
\begin{equation}\label{Poincare}
\|\xisf\|_{\boldsymbol{L}^{2}(\Omega)} 
 \leq 
\mathtt{C}_{P,\Omega}\|\nabla\xisf\|_{\underset{\approx}{\boldsymbol{L}}^{2}(\Omega)}
\mbox{ and }
\|\xisf-\Pi_{K,0}(\xisf)\|_{\boldsymbol{L}^{2}(K)} 
\leq  
\dfrac{h_K}{\pi}
\|\nabla\xisf\|_{\underset{\approx}{\boldsymbol{L}}^{2}(K)},
\end{equation}
where
\begin{equation}
\mathtt{C}_{P,\Omega}=\frac{1}{\pi}\left(\sum_{i=1}^d\frac{1}{\left|l_i\right|^2}\right)^{-1/2}
\end{equation}
with $\left|l_1\right|,\ldots,\left|l_d\right|$ being the sides of a $d$-dimensional box containing $\Omega$. We immediately comment that these inequalities imply that, for $\xisf \in \mathbf{V}$ and $K \in \T$,
\begin{equation}\label{C_Omega}
\|\xisf\|_{\boldsymbol{L}^{2}(\Omega)} \leq \texttt{C}_{\Omega}
\norm{\xisf}_{\mathbf{V},\Omega}
\mbox{ and }
\|\xisf-\Pi_{K,0}(\xisf)\|_{\boldsymbol{L}^{2}(K)} \leq \texttt{C}_{K}
\norm{\xisf}_{\mathbf{V},K},
\end{equation}
where
\begin{equation}\label{CO}
\texttt{C}_{\Omega}=\left\{
\begin{array}{ll}
\frac{\texttt{C}_{P,\Omega}}{\sqrt{\varepsilon}}, & \textrm{if}~\kappa=0, \\
\min\left\{\frac{\texttt{C}_{P,\Omega}}{\sqrt{\varepsilon}},\frac{1}{\sqrt{\kappa}}\right\}, & \textrm{if}~\kappa\neq 0,
\end{array}
\right.
\end{equation}
and
\begin{equation}\label{CK}
\texttt{C}_{K}=\left\{
\begin{array}{ll}
\frac{h_K}{\pi\sqrt{\varepsilon}}, & \textrm{if}~\kappa=0, \\
\min\left\{\frac{h_K}{\pi\sqrt{\varepsilon}},\frac{1}{\sqrt{\kappa}}\right\}, & \textrm{if}~\kappa\neq 0.
\end{array}
\right.
\end{equation}

We define $\bia: \mathbf{V} \times \mathbf{V} \rightarrow \mathbb{R}$, $\bib: \mathbf{V} \times Q \rightarrow \mathbb{R}$ and $\bic: \mathbf{V} \times \mathbf{V} \rightarrow \mathbb{R}$ by
\begin{equation}\label{bilinear_form_B}
\left\{
\begin{array}{l}
\bia(\xisf,\zetasf):=
\varepsilon(\nabla\xisf,\nabla\zetasf)_{\underset{\approx}{\boldsymbol{L}}^{2}(\Omega)} + (\kappa\xisf + \left(\csf \cdot\nabla\right)\xisf ,\zetasf)_{\boldsymbol{L}^{2}(\Omega)},
\\
\bib(\zetasf,\phi):=(\phi,\nabla\cdot\zetasf)_{L^{2}(\Omega)},
\vspace{0.1cm}
\\
\bic(\xisf,\zetasf):= \varepsilon(\nabla \xisf,\nabla\zetasf)_{\underset{\approx}{\boldsymbol{L}}^{2}(\Omega)} +
(\kappa \xisf-\left(\csf \cdot\nabla\right)\xisf,\zetasf)_{\boldsymbol{L}^{2}(\Omega)}.
\end{array}
\right.
\end{equation}
The fact that $\csf$ is a solenoidal vector field and integration by parts implies that
\begin{equation}\label{ac_link}
\bia(\xisf,\zetasf)=\bic(\zetasf,\xisf) \quad \forall \xisf,\zetasf \in \mathbf{V}.
\end{equation}
Moreover, for all $\xisf \in \mathbf{V}$,
\begin{equation}
 \label{eq:a_coercive}
\bia(\xisf,\xisf)=\bic(\xisf,\xisf) = \norm{\xisf}_{\mathbf{V},\Omega}^2
\end{equation}
and, for all $\xisf,\zetasf \in \mathbf{V}$,
\begin{equation}
 \label{eq:a_continuous}
\bia(\xisf,\zetasf) \leq \mathtt{C}_{\textsf{ct}} \norm{\xisf}_{\mathbf{V},\Omega}\norm{\zetasf}_{\mathbf{V},\Omega},
\qquad
\bic(\xisf,\zetasf) \leq \mathtt{C}_{\textsf{ct}} \norm{\xisf}_{\mathbf{V},\Omega}\norm{\zetasf}_{\mathbf{V},\Omega},
\end{equation}
where
\begin{equation}
 \label{eq:Cct}
\mathtt{C}_{\textsf{ct}}=
1 + \frac{\mathtt{C}_{\Omega}}{\sqrt{\varepsilon}} \| |\csf| \|_{L^{\infty}(\Omega)},
\end{equation}
with $\| |\csf| \|_{L^{\infty}(\Omega)}$ being the $L^{\infty}(\Omega)$ norm of $|\csf|$ and $\mathtt{C}_{\Omega}$ being given by \eqref{CO}.

We now recall the standard inf--sup condition \cite{MR2050138,MR851383}: there exists a positive constant $\beta$ such that
\begin{equation}\label{inf-sup}
\beta\|\phi\|_{L^{2}(\Omega)} \leq \sup_{\xisf\in\mathbf{V}\setminus\{\boldsymbol{0}\}}
\frac{\bib(\xisf,\phi)}{\|\nabla\xisf\|_{\underset{\approx}{\boldsymbol{L}}^{2}(\Omega)}}
\quad \forall \phi \in Q.
\end{equation}
Notice that, in view of $\norm{\xisf}_{\mathbf{V},\Omega}^2 \leq (\varepsilon + \kappa \texttt{C}_{P,\Omega}^2) \| \nabla \xisf\|^2_{\underset{\approx}{\boldsymbol{L}}^{2}(\Omega)}$, we have that
\begin{equation}\label{inf-sup_2}
\norm{\phi}_{Q,\Omega}\leq \texttt{C}_{\textsf{is}}\sup_{\xisf\in \mathbf{V} \setminus\{\boldsymbol{0}\}}
\frac{\bib(\xisf,\phi)}{\norm{\xisf}_{\mathbf{V},\Omega}} \quad \forall \phi \in Q,
\end{equation}
where
\begin{equation}\label{Cis}
\texttt{C}_{\textsf{is}}=
\frac{\sqrt{\varepsilon+\kappa\mathtt{C}_{P,\Omega}^{2}}}{\beta}.
\end{equation}


\section{Optimal control problem: optimize}\label{control_optimo}
In this section we briefly analyze the optimal control problem \eqref{eq:min}--\eqref{constraint}. To accomplish this task, we begin by introducing the following weak version of the state equations \eqref{cont_optimal_control}: Find $(\ysf,\psf) \in \mathbf{V} \times Q$ such that
\begin{equation}\label{state_constraint}
\left\{
\begin{array}{rcll}
\bia(\ysf,\xisf) - \bib(\xisf,\psf) & = & (\fsf+\usf,\xisf)_{\boldsymbol{L}^{2}(\Omega)}
& \forall~\xisf\in \mathbf{V},\\
\bib(\ysf,\phi) & = & 0
& \forall~\phi\in Q,
\end{array}
\right.
\end{equation}
where the bilinear forms $\bia$ and $\bib$ are defined by \eqref{bilinear_form_B} and we recall that $\varepsilon > 0$, $\kappa \geq 0$, $\csf \in \mathbf{W}^{1,\infty}(\Omega)$ is a solenoidal field, $\fsf \in \mathbf{L}^2(\Omega)$, $\mathbf{V} = \boldsymbol{H}_{0}^{1}(\Omega)$ and $Q=\boldsymbol{L}_0^2(\Omega)$. In view of the fact that $\bia$ satisfies \eqref{eq:a_coercive} and \eqref{eq:a_continuous} and $\bib$ satisfies the inf--sup conditions \eqref{inf-sup} and \eqref{inf-sup_2}, we conclude the well--posedness of problem \eqref{state_constraint} \cite{MR2050138,MR851383}.
We also mention that, due to de Rham's Theorem (see Section 4.1.3 and Theorem B73 in \cite{MR2050138}), we can consider the following equivalent formulation of problem \eqref{state_constraint}: Find $\ysf \in \mathbf{V}_0$ such that
\begin{equation}\label{optimal_control_constrained}
\bia(\ysf,\xisf)  =  (\fsf+\usf,\xisf)_{\boldsymbol{L}^{2}(\Omega)}
\quad \forall\xisf\in \mathbf{V}_0,
\end{equation}
where $\mathbf{V}_0 :=\{\vsf\in\boldsymbol{H}_{0}^{1}(\Omega):~\nabla\cdot\vsf=0\}$. 

To analyze our optimal control problem, we follow \cite{MR0271512,MR2583281} and introduce the so--called control to state map $S:\boldsymbol{L}^{2}(\Omega)\rightarrow \mathbf{V}_0$ which,  given a control $\usf$, associates to it the state $\ysf$ that solves \eqref{optimal_control_constrained}. In addition, we define, for $\mathbf{a}, \mathbf{b} \in \mathbb{R}^d$ with $\mathbf{a} < \mathbf{b}$, the set
\begin{equation}\label{eq:Uad}
\mathbf{U}_{ad}:=\{\vsf\in \mathbf{L}^{2}(\Omega):~
\aasf\leq \vsf \leq \bbsf \quad\textrm{a.e. in } \Omega \};
\end{equation}
the vector inequalities being understood componentwise. The set $\mathbf{U}_{ad}$ is a bounded, convex, closed and nonempty subset of $\mathbf{L}^{2}(\Omega)$ and consequently weakly sequentially compact. Thus, in view of the fact that the reduced cost functional
\begin{equation*}
f(\usf):=\frac{1}{2}\|S(\usf)-\ysf_{\Omega}\|_{\boldsymbol{L}^{2}(\Omega)}^{2}+
\frac{\vartheta}{2}\|\usf\|_{\boldsymbol{L}^{2}(\Omega)}^{2}
\end{equation*}
is weakly lower semicontinuous and strictly convex $(\vartheta > 0)$, we conclude the existence and uniqueness of an optimal control $\bar \usf $ and an optimal state $\bar{\ysf}$ that satisfy \eqref{optimal_control_constrained}, or equivalently \eqref{state_constraint}; see Theorem 2.14 in \cite{MR2583281}. The existence of $\bar \psf$ such that $(\bar \ysf,\bar \psf)$ solves \eqref{state_constraint} follows from de Rham's Theorem. In addition, we have that $\bar \usf$ satisfies the first--order optimality condition
\begin{equation}
\label{variational_inequality}
f'(\bar{\usf})(\usf-\bar{\usf})\geq 0 \quad\forall~\usf\in\mathbf{U}_{ad};
\end{equation}
see \cite[Lemma 2.21]{MR2583281}. To explore this variational inequality, and to obtain optimality conditions, we define, on the basis of the formal Lagrange method (see \cite[Section 3.3]{MR3308473} and \cite[Section 2.10]{MR2583281}), the adjoint state $(\wsf,\qsf)$ as the unique solution to the following weak problem: Find $(\wsf,\qsf) \in \mathbf{V} \times Q$ such that
\begin{equation}
\label{adjoint_state}
\left\{
\begin{array}{rcll}
\bic(\wsf,\zetasf) + \bib(\zetasf,\qsf) & = & (\ysf - \ysf_{\Omega},\zetasf)_{\boldsymbol{L}^{2}(\Omega)}
& \forall~\zetasf\in \mathbf{V},\\
\bib(\wsf,\psi) & = & 0
& \forall~\psi\in Q.
\end{array}
\right.
\end{equation} 
With this adjoint state at hand, the variational inequality \eqref{variational_inequality} can be rewritten as
\begin{equation}
\label{variational_inequality2} 
(\bar \wsf+\vartheta \bar \usf,\usf- \bar \usf)_{\boldsymbol{L}^{2}(\Omega)} \geq  0 \quad \forall~\usf\in\mathbf{U}_{ad}.
\end{equation}
We have thus arrived at the following optimality system: $(\bar \ysf,\bar \psf,\bar \usf) \in \mathbf{V} \times Q \times \mathbf{U}_{ad}$ is optimal for the PDE--constrained optimization problem \eqref{eq:min}--\eqref{constraint} if and only if $(\bar \ysf,\bar \psf,\bar \wsf,\bar \qsf,\bar \usf) \in \mathbf{V} \times Q \times \mathbf{V} \times Q \times \mathbf{U}_{ad}$ solves
\begin{equation}\label{optimal_weak_system}
\left\{
\begin{array}{rcll}
\bia(\bar{\ysf},\xisf) -\bib(\xisf,\bar{\psf})  & = & (\fsf+\bar{\usf},\xisf)_{\boldsymbol{L}^{2}(\Omega)},
& \forall~\xisf\in \mathbf{V},\\
\bib(\bar{\ysf},\phi) & = & 0,
& \forall~\phi\in Q,\\
\bic(\bar{\wsf},\zetasf) +\bib(\zetasf,\bar{\qsf}) & = & (\bar{\ysf}-\ysf_{\Omega},\zetasf)_{\boldsymbol{L}^{2}(\Omega)},
& \forall~\zetasf\in \mathbf{V},\\
\bib(\bar{\wsf},\psi) & = & 0,
& \forall~\psi\in Q,\\
(\bar{\wsf}+\vartheta\bar{\usf},\usf-\bar{\usf})_{\boldsymbol{L}^{2}(\Omega)} & \geq & 0,
& \forall~\usf\in\mathbf{U}_{ad};
\end{array}
\right.
\end{equation}
see also \cite[Section 2]{MR2263034} and \cite[Section 2]{MR2803865} for similar results when the state equations \eqref{cont_optimal_control} are the Stokes equations.

We finally recall the projection formula for the optimal control variable: the variational inequality in \eqref{variational_inequality2} can be equivalently written as \cite[Chapter 2]{MR2583281}
\begin{equation}\label{projection_formula}
\bar{\usf}=\Pi_{[\aasf,\bbsf]}\left(-\frac{1}{\vartheta}\bar{\wsf}\right)
\quad\textrm{a.e. in }\Omega,
\end{equation}
where $\Pi_{[\aasf,\bbsf]}\left(\zetasf \right)(\boldsymbol{x}):=\min\left\{\bbsf,\max\left\{\aasf,\zetasf(\boldsymbol{x})\right\}\right\}$ and it is understood componentwise. We note that
\begin{equation}\label{eq:projproperty}
\left\| \Pi_{[\aasf,\bbsf]} (\xisf)-\Pi_{[\aasf,\bbsf]} (\zetasf)  \right\|_{\boldsymbol{L}^2(K)}
\leq
\left\|\xisf-\zetasf\right\|_{\boldsymbol{L}^2(K)} \quad \forall \xisf,\zetasf \in \mathbf{V}.
\end{equation}

\section{Finite element discretization}\label{FEM_optimal_control}
We follow the \emph{optimize--then--discretize} approach and introduce a numerical scheme to approximate the solution to \eqref{optimal_weak_system}. The scheme allows for the incorporation of stabilization terms into the standard Galerkin discretizations of the state and adjoint equations; no a priori relation between the stabilized terms is required. We refer the reader to Remark \ref{rk:otd} below for a discussion regarding the advantages of the proposed approach when solving \eqref{eq:min}--\eqref{constraint}.

The stabilized scheme reads as follows: Find $(\bar{\ysf}_{\T},\bar{\psf}_{\T},\bar{\wsf}_{\T},\bar{\qsf}_{\T},\bar{\usf}_{\T}) \in \mathbf{V}(\T) \times Q(\T) \times \mathbf{V}(\T) \times Q(\T) \times \mathbf{U}_{ad}(\T)$ such that
\begin{equation}\label{fem_control}
\left\{
\begin{array}{rcl}
\bia(\bar{\ysf}_{\T},\xisf) - \bib(\xisf,\bar{\psf}_{\T}) + \bis(\bar{\ysf}_{\T},\bar{\psf}_{\T},\fsf+\bar{\usf}_{\T};\xisf)
& = & (\fsf+\bar{\usf}_{\T},\xisf)_{\boldsymbol{L}^{2}(\Omega)},
\\
\bib(\bar{\ysf}_{\T},\phi) + \mathcal{H}(\bar{\ysf}_{\T},\bar{\psf}_{\T},\fsf+\bar{\usf}_{\T};\phi) 
& = & 0,
\\
\bic(\bar{\wsf}_{\T},\zetasf) + \bib(\zetasf,\bar{\qsf}_{\T})
+ \mathcal{Q}(\bar{\wsf}_{\T},\bar{\qsf}_{\T},\bar{\ysf}_{\T}-\ysf_{\Omega};\zetasf)
& = & (\bar{\ysf}_{\T}-\ysf_{\Omega},\zetasf)_{\boldsymbol{L}^{2}(\Omega)},
\\
\bib(\bar{\wsf}_{\T},\psi) + \mathcal{K}(\bar{\wsf}_{\T},\bar{\qsf}_{\T},\bar{\ysf}_{\T}-\ysf_{\Omega};\psi)  & = & 0,
\\
(\bar{\wsf}_{\T}+\vartheta\bar{\usf}_{\T},\usf-\bar{\usf}_{\T})_{\boldsymbol{L}^{2}(\Omega)} & \geq & 0,
\end{array}
\right.
\end{equation}
for all $(\xisf,\phi,\zetasf,\psi,\usf)\in \mathbf{V}(\T) \times Q(\T) \times \mathbf{V}(\T) \times Q(\T) \times \mathbf{U}_{ad}(\T)$; the bilinear forms $\bia, \bib$ and $\bic$ being defined as in \eqref{bilinear_form_B}. We consider the setting where the discrete spaces $\mathbf{V}(\T)$ and $Q(\T)$ are subspaces of $\mathbf{V}$ and $Q$, respectively, and the discrete set $\mathbf{U}_{ad}(\T)$ is a subset of $\mathbf{U}_{ad}$. Hence, $\mathbf{V}(\T)\subset \mathbf{V}$, $Q(\T)\subset Q$ and $\mathbf{U}_{ad}(\T)\subset \mathbf{U}_{ad}$. The terms $\bis$ and $\mathcal{H}$, and $\mathcal{Q}$ and $\mathcal{K}$ in \eqref{fem_control}, correspond to stabilization terms for the state and adjoint equations, respectively. Finally, we assume that $\mathbf{V}(\T)$, $Q(\T)$, $\mathbf{U}_{ad}(\T)$, $\bis$, $\mathcal{H}$, $\mathcal{Q}$ and $\mathcal{K}$ are such that at least one solution to \eqref{fem_control} exists.

\begin{remark}[optimize--then--discretize approach]\rm
In this work, we consider the \emph{optimize--then--discretize} approach because it allows for the incorporation of different stabilization terms into the discrete state and adjoint equations. The purpose of the latter is twofold: first, the use of low--order methods, and second, the efficient resolution of \eqref{fem_control}, by appropriately tuning some associated stabilization parameters, in convection--dominated regimes. The latter is especially important since, as is observed in \cite{MR2595051} for the scalar case, the failure to resolve boundary layers exhibited by the solution of \eqref{optimal_weak_system} can pollute the numerical solution in the entire domain. In contrast, the use of the \emph{discretize--then--optimize} approach imposes a relationship between the stabilization terms. To be precise, for a given stabilization terms $\bis$ in the state equations, the aforementioned approach imposes that the stabilization term $\mathcal{Q}$ is its adjoint counterpart \cite{collis2002analysis}. This could lead to an unnatural stabilization term in the adjoint equations delivering oscillatory solutions and therefore poor approximation results in convection--dominated regimes \cite{collis2002analysis}.
\label{rk:otd}
\end{remark}

Before proceeding with the analysis of our method, it is instructive to comment on those advocated in the literature. Regarding the a priori theory, in the absence of control constraints, the design and analysis of numerical techniques for solving \eqref{eq:min}--\eqref{cont_optimal_control}, with $\mathbf{c} = \mathbf{0}$ and $\kappa =0$, have been investigated in several papers; see \cite{MR2107381,MR2497339,MR3347461} and references therein. To the best of our knowledge, and again, for $\mathbf{c} = \mathbf{0}$ and $\kappa = 0$, the first work that incorporates control--constraints and analyzes stabilized schemes for \eqref{eq:min}--\eqref{constraint} is \cite{MR2263034}; the optimal control is discretized by using piecewise constant functions. The authors, on the basis of postprocessing techniques, provide a quadratic error estimate for the approximation of the optimal control variable \cite[Theorem 2.8]{MR2263034}. Subsequently, the authors of \cite{MR2803865} extend the results of \cite{MR2263034} and analyze nonconforming schemes for the discretization of the state and adjoint equations; in contrast to \cite{MR2263034}, the vector field is not assumed to be in $\mathbf{H}^2(\Omega) \cap \mathbf{W}^{1,\infty}(\Omega)$. In addition, \cite{MR2803865} analyzes an anisotropic scheme for approximating the solution to \eqref{eq:min}--\eqref{constraint} when $\Omega$ is not convex; a domain with a reentrant edge ($d=3$) is considered. We conclude this paragraph by mentioning the reference \cite{MR2472877}, where the authors investigate numerical techniques for solving a modification of problem \eqref{eq:min}--\eqref{constraint} that, in addition, includes constraints on the state variable.

Regarding the a posteriori error analysis, to the best of our knowledge, the first work to propose an error estimator for \eqref{eq:min}--\eqref{constraint}, with $\mathbf{c} = \mathbf{0}$ and $\kappa =0$, is \cite{MR1950625}. In this work, the authors follow the discretize--then--optimize approach and obtain a discrete optimality system with no stabilization terms \cite[equation (2.9)]{MR1950625}. They propose an error estimator in a two--dimensional setting and analyze its reliability properties \cite[Theorem 3.1]{MR1950625}. However, there is no efficiency analysis. Later, an asymptotically exact ZZ--type a posteriori error estimator was proposed in \cite{MR2387126}. The authors derive upper and lower bounds for the error in terms of the proposed estimator \cite[Theorem 5.1]{MR2387126} that relies on an error non--degeneracy condition \cite[inequality (2.24)]{MR2387126} and strong regularity assumptions on $(\bar \ysf,\bar \psf)$: it is assumed to belong to $\mathbf{H^3}(\Omega) \cap \mathbf{V} \times \mathbf{H}^1(\Omega) \cap Q $ \cite[Lemma 4.2]{MR2387126}. In \cite{MR2968744}, the authors propose an a posteriori error estimator for \eqref{eq:min}--\eqref{constraint} but with the state equations \eqref{cont_optimal_control} replaced by a Stokes-Darcy system: they study the reliability and efficiency properties of the proposed estimator. We also mention \cite{MR2912614}, where a similar PDE--constrained optimization problem has been analyzed but with the control--constraint \eqref{constraint} replaced by the state--constraint $\| \ysf \|_{\mathbf{L}^2(\Omega)} \leq \gamma$, where $\gamma >0$: an error estimator is proposed and its reliability and efficiency properties are investigated. All the aforementioned references consider plain Galerkin discretizations for the state and adjoint equations, \emph{i.e.}, no stabilization terms are considered. We conclude this paragraph by mentioning the so--called dual weighted residual method  (DWR)  \cite{MR1430239} and its applications to the optimal control of flow problems \cite{MR1867885,MR2275762}.

Recently, the authors of \cite{MR3212590} propose and analyze an a posteriori error estimator for problem \eqref{eq:min}--\eqref{constraint} when $\kappa =0$ \cite[Section 5]{MR3212590}. The associated discrete optimal system incorporates stabilized terms, into the state and adjoint equations, that are based on the streamline--diffusion finite element method (SDFEM). On the basis of proposed and analyzed a posteriori error estimators for the state and adjoint equations, the authors derive an estimator for \eqref{eq:min}--\eqref{constraint}. We comment that the obtained upper bound for the error, in terms of the a posteriori error estimator, is not computable.

In this work we analyze a family of a posteriori error estimators in a unifying framework that incorporates a wide variety of standard and stabilized finite element methods.


\section{A posteriori error analysis}\label{A_posteriori}
In this section we derive and analyze a posteriori error estimators for the solution to the discretization \eqref{fem_control} of the optimal control problem \eqref{optimal_weak_system}.

\subsection{Reliability analysis}
\label{reliability}

We begin this section by introducing the following notation. Let $\boldsymbol{\esf}_{\ysf}:=\bar{\ysf}-\bar{\ysf}_{\T}$, $\esf_{\psf}:=\bar{\psf}-\bar{\psf}_{\T}$, $\boldsymbol{\esf}_{\wsf}:=\bar{\wsf}-\bar{\wsf}_{\T}$, $\esf_{\qsf}:=\bar{\qsf}-\bar{\qsf}_{\T}$ and $\boldsymbol{\esf}_{\usf}:=\bar{\usf}-\bar{\usf}_{\T}$, where $(\bar{\ysf},\bar{\psf},\bar{\wsf},\bar{\qsf},\bar{\usf}) \in \mathbf{V} \times Q \times \mathbf{V} \times Q \times \mathbf{U}_{ad}$ is the solution to the optimality system \eqref{optimal_weak_system} and $(\bar{\ysf}_{\T},\bar{\psf}_{\T},\bar{\wsf}_{\T},\bar \qsf_{\T},\bar{\usf}_{\T}) \in \mathbf{V}(\T) \times Q(\T) \times \mathbf{V}(\T) \times Q(\T) \times \mathbf{U}_{\textrm{ad}}(\T)$ is its numerical approximation given as the solution to \eqref{fem_control}. The goal of this section is to obtain an upper bound for
\begin{equation}
\label{eq:global_error}
\norm{(\boldsymbol{\esf}_{\ysf},\esf_{\psf},\boldsymbol{\esf}_{\wsf},\esf_{\qsf},\boldsymbol{\esf}_{\usf})}_{\Omega}^2:=\sum_{K\in\T}\norm{(\boldsymbol{\esf}_{\ysf},\esf_{\psf},\boldsymbol{\esf}_{\wsf},\esf_{\qsf},\boldsymbol{\esf}_{\usf})}_{K}^2
\end{equation}
where
\[
\norm{(\boldsymbol{\esf}_{\ysf},\esf_{\psf},\boldsymbol{\esf}_{\wsf},\esf_{\qsf},\boldsymbol{\esf}_{\usf})}_{K}^2:=
\normv{\boldsymbol{\esf}_{\ysf}}_{\mathbf{V},K}^{2}
+\varrho\normp{\esf_{\psf}}_{Q,K}^{2}
+\normv{\boldsymbol{\esf}_{\wsf}}_{\mathbf{V},K}^{2}
+\varrho\normp{\esf_{\qsf}}_{Q,K}^{2}
+\|\boldsymbol{\esf}_{\usf}\|_{\boldsymbol{L}^2(K)}^{2}.
\]
The norms $\norm{\cdot}_{\mathbf{V},K}$ and  $\normp{\cdot}_{Q,K}$ are defined as in \eqref{eq:local_energy_norm} and the parameter $\varrho$ is a nonnegative constant that will be arbitrary in the analysis but fixed in the numerical experiments of Section \ref{sec:numex}. 

The upper bound for the error \eqref{eq:global_error} that we obtain is constructed using upper bounds on the error between the solution to the discretization \eqref{fem_control} and auxilliary variables that we define in what follows. Let $(\hat{\ysf},\hat{\psf}) \in \mathbf{V} \times Q$ be the solution to
\begin{equation}
\label{eq:y_hat-p_hat}
\left\{
\begin{array}{rcll}
\bia(\hat{\ysf},\xisf)-\bib(\xisf,\hat{\psf})
& = &
(\fsf+\bar{\usf}_{\T},\xisf)_{\boldsymbol{L}^{2}(\Omega)} &
\forall~\xisf\in \mathbf{V},\\
\bib(\hat{\ysf},\phi) & = & 0 &
\forall~\phi\in Q.
\end{array}
\right.
\end{equation}
We notice that, in view of \eqref{fem_control}, we have that $(\bar \ysf_{\T},\bar \psf_{\T})\in \mathbf{V}(\T)\times Q(\T)$ satisfies
\begin{equation}\label{fem_control_state}
\left\{
\begin{array}{rcll}
\bia(\bar{\ysf}_{\T},\xisf) -\bib(\xisf,\bar{\psf}_{\T}) + \mathcal{S}(\bar{\ysf}_{\T},\bar{\psf}_{\T},\fsf+\bar{\usf}_{\T};\xisf)
& = & (\fsf+\bar{\usf}_{\T},\xisf)_{\boldsymbol{L}^{2}(\Omega)} &
\\
\bib(\bar{\ysf}_{\T},\phi) + \mathcal{H}(\bar{\ysf}_{\T},\bar{\psf}_{\T},\fsf+\bar{\usf}_{\T};\phi) 
& = & 0
\end{array}
\right.
\end{equation}
for all $\xisf\in \mathbf{V}(\T)$ and $\phi\in Q(\T)$. Consequently, $(\bar \ysf_{\T},\bar \psf_{\T})$ can be seen as a finite element approximation of the solution to \eqref{eq:y_hat-p_hat}. We thus make the following assumption:

\textbf{Assumption 1.} There exist quantities $\eta_{\ysf}$ and $\eta_{\psf}$ which are such that
\begin{equation}
\normv{\hat{\ysf}-\bar{\ysf}_{\T}}_{\mathbf{V},\Omega} \leq \eta_{\ysf}
\mbox{ and }
\normp{\hat{\psf}-\bar{\psf}_{\T}}_{Q,\Omega} \leq \eta_{\psf}.
\end{equation}

Let $(\hat{\wsf},\hat{\qsf}) \in \mathbf{V} \times Q$ be the solution to 
\begin{equation}\label{eq:w_hat-q_hat}
\left\{
\begin{array}{rcll}
\bic(\hat{\wsf}, \zetasf)+\bib(\zetasf,\hat{\qsf}) & = & (\bar{\ysf}_{\T}-\ysf_{\Omega},\zetasf)_{\boldsymbol{L}^{2}(\Omega)} &
\forall~\zetasf\in \mathbf{V},\\
\bib(\hat{\wsf},\psi) & = & 0 &
\forall~\psi\in Q.
\end{array}
\right.
\end{equation}
We notice that, again in view of \eqref{fem_control}, $(\bar{\wsf}_{\T},\bar{\qsf}_{\T})\in\mathbf{V}(\T)\times Q(\T)$ satisfies
\begin{equation}\label{fem_control_adjoint}
\left\{
\begin{array}{rcll}
\bic(\bar{\wsf}_{\T},\zetasf)+\bib(\zetasf,\bar{\qsf}_{\T})
+ \mathcal{Q}(\bar{\wsf}_{\T},\bar{\qsf}_{\T},\bar{\ysf}_{\T}-\ysf_{\Omega};\zetasf)
& = & (\bar{\ysf}_{\T}-\ysf_{\Omega},\zetasf)_{\boldsymbol{L}^{2}(\Omega)}, 
\\
\bib(\bar{\wsf}_{\T},\psi) + \mathcal{K}(\bar{\wsf}_{\T},\bar{\qsf}_{\T},\bar{\ysf}_{\T}-\ysf_{\Omega};\psi)  & = & 0, 
\end{array}
\right.
\end{equation}
for all $\zetasf\in\mathbf{V}(\T)$ and $\psi\in Q(\T)$, and hence $(\bar{\wsf}_{\T},\bar{\qsf}_{\T})$ corresponds to a finite element approximation of the solution to \eqref{eq:w_hat-q_hat}. We thus make the following assumption:

\textbf{Assumption 2.} There exist quantities $\eta_{\wsf}$ and $\eta_{\qsf}$ which are such that
\begin{equation}
\normv{\hat{\wsf}-\bar{\wsf}_{\T}}_{\mathbf{V},\Omega} \leq \eta_{\wsf}
\mbox{ and }
\normp{\hat{\qsf}-\bar{\qsf}_{\T}}_{Q,\Omega} \leq \eta_{\qsf}.
\end{equation}

We introduce the auxiliary control variable 
\begin{equation}\label{eq:u_tilde}
\tilde{\usf}=\Pi_{[\aasf,\bbsf]}\left(-\tfrac{1}{\vartheta}\bar{\wsf}_{\T}\right).
\end{equation}
We define the error between this auxilliary control variable and $\bar \usf_{\T}$ as follows:
\begin{equation}
 \label{eq:indicator_control}
 \eta_{\usf}:= \left( \sum_{K \in \T} \eta_{\textsf{ct},K}^2 \right)^{1/2},
\mbox{ with } 
  \eta_{\usf,K} := \| \tilde{\usf} - \bar \usf_{\T} \|_{\boldsymbol{L}^2(K)}.
\end{equation}

We also define
\begin{equation}
\label{eq:Cy}
\mathfrak{C}_{\ysf}=2
+2\mu\mathtt{C}_{\Omega}^{6}
+4(1+\varrho\omega)(\mathtt{C}_{\Omega}^{4}+\mu\mathtt{C}_{\Omega}^{8}+2\mu\mathtt{C}_{\Omega}^{12}),
\end{equation}
\begin{equation}
\label{eq:Cw}
\mathfrak{C}_{\wsf}=2
+\mu\mathtt{C}_{\Omega}^{2}
+2\mu(1+\varrho\omega)(\mathtt{C}_{\Omega}^{4}+2\mathtt{C}_{\Omega}^{8}),
\end{equation}
and
\begin{equation}
\label{eq:Cu}
\mathfrak{C}_{\usf}=2
+2\mu\mathtt{C}_{\Omega}^{8}
+4(1+\varrho\omega)(\mathtt{C}_{\Omega}^{2}+2\mathtt{C}_{\Omega}^{6}+\mu\mathtt{C}_{\Omega}^{10}+2\mu\mathtt{C}_{\Omega}^{14}),
\end{equation}
with $\mu = 4\vartheta^{-2}$ and $\omega = \mathtt{C}_{\textsf{is}}^2 ( 1 + \mathtt{C}_{\textsf{ct}})^2$.

We now present the analysis through which we obtain an upper bound for the total error.
\begin{theorem}[global reliability]
\label{th:global_reliability}
If \textbf{Assumptions 1} and \textbf{2} hold, then 
\begin{equation}
\label{eq:reliability}
\norm{(\boldsymbol{\esf}_{\ysf},\esf_{\psf},\boldsymbol{\esf}_{\wsf},\esf_{\qsf},\boldsymbol{\esf}_{\usf})}_{\Omega}^2
\leq 
\Upsilon^{2}
\end{equation}
where
\begin{equation}
\Upsilon^{2}:=
\mathfrak{C}_{\ysf} 
\eta_{\ysf}^{2} +
2\varrho
\eta_{\psf}^{2} +
\mathfrak{C}_{\wsf} 
\eta_{\wsf}^{2} +
2\varrho
\eta_{\qsf}^{2} +
\mathfrak{C}_{\usf} 
\eta_{\usf}^{2},
\end{equation}
and $\mathfrak{C}_{\ysf}$, $\mathfrak{C}_{\wsf}$ and $\mathfrak{C}_{\usf}$ are defined by \eqref{eq:Cy}, \eqref{eq:Cw} and \eqref{eq:Cu}, respectively.
\end{theorem}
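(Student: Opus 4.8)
The plan is to estimate each of the five error components by inserting the auxiliary variables $(\hat{\ysf},\hat{\psf})$, $(\hat{\wsf},\hat{\qsf})$ and $\tilde{\usf}$ and applying the triangle inequality, so that every error splits as (continuous $-$ auxiliary) $+$ (auxiliary $-$ discrete). By construction the second summand is exactly what \textbf{Assumptions 1} and \textbf{2} and the definition \eqref{eq:indicator_control} of $\eta_{\usf}$ control, so the work lies in bounding the first summand. First I would subtract \eqref{eq:y_hat-p_hat} from the state block of \eqref{optimal_weak_system}: the difference $\bar{\ysf}-\hat{\ysf}$ lies in $\mathbf{V}_0$ and is driven solely by $\boldsymbol{\esf}_{\usf}$. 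Testing with $\bar{\ysf}-\hat{\ysf}$ annihilates the pressure, so coercivity \eqref{eq:a_coercive} and \eqref{C_Omega} give $\normv{\bar{\ysf}-\hat{\ysf}}_{\mathbf{V},\Omega}\le\mathtt{C}_{\Omega}\|\boldsymbol{\esf}_{\usf}\|_{\boldsymbol{L}^{2}(\Omega)}$; the identical argument on \eqref{eq:w_hat-q_hat} yields $\normv{\bar{\wsf}-\hat{\wsf}}_{\mathbf{V},\Omega}\le\mathtt{C}_{\Omega}\|\boldsymbol{\esf}_{\ysf}\|_{\boldsymbol{L}^{2}(\Omega)}$. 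The two pressure differences I would control through the inf--sup inequality \eqref{inf-sup_2} together with \eqref{eq:a_continuous} and \eqref{C_Omega}; this is where the factor $\omega=\mathtt{C}_{\textsf{is}}^{2}(1+\mathtt{C}_{\textsf{ct}})^{2}$ appearing in \eqref{eq:Cy}--\eqref{eq:Cu} is produced.

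The estimates above couple the velocity errors cyclically ($\boldsymbol{\esf}_{\ysf}$ through $\boldsymbol{\esf}_{\usf}$, $\boldsymbol{\esf}_{\wsf}$ through $\boldsymbol{\esf}_{\ysf}$, and $\boldsymbol{\esf}_{\usf}$ through $\boldsymbol{\esf}_{\wsf}$), and the heart of the proof is to break this cycle without a smallness hypothesis. The device is a duality identity: since $\bar{\ysf}-\hat{\ysf}$ and $\bar{\wsf}-\hat{\wsf}$ are both solenoidal, I would test the state-difference equation with $\bar{\wsf}-\hat{\wsf}$, the adjoint-difference equation with $\bar{\ysf}-\hat{\ysf}$, and invoke \eqref{ac_link} to obtain
\[
(\boldsymbol{\esf}_{\usf},\bar{\wsf}-\hat{\wsf})_{\boldsymbol{L}^{2}(\Omega)}=(\boldsymbol{\esf}_{\ysf},\bar{\ysf}-\hat{\ysf})_{\boldsymbol{L}^{2}(\Omega)}.
\]

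Next I would treat the control. Taking $\usf=\tilde{\usf}$ in the variational inequality of \eqref{optimal_weak_system} and $\usf=\bar{\usf}$ in the analogous inequality satisfied by $\tilde{\usf}$ (a consequence of the projection definition \eqref{eq:u_tilde}) and adding the two yields $\vartheta\|\bar{\usf}-\tilde{\usf}\|_{\boldsymbol{L}^{2}(\Omega)}^{2}\le-(\boldsymbol{\esf}_{\wsf},\bar{\usf}-\tilde{\usf})_{\boldsymbol{L}^{2}(\Omega)}$. I would then write $\boldsymbol{\esf}_{\wsf}=(\bar{\wsf}-\hat{\wsf})+(\hat{\wsf}-\bar{\wsf}_{\T})$ and $\bar{\usf}-\tilde{\usf}=\boldsymbol{\esf}_{\usf}-(\tilde{\usf}-\bar{\usf}_{\T})$, and apply the duality identity to the cross term $(\bar{\wsf}-\hat{\wsf},\boldsymbol{\esf}_{\usf})_{\boldsymbol{L}^{2}(\Omega)}$. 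Crucially, this equals $(\bar{\ysf}-\hat{\ysf},\boldsymbol{\esf}_{\ysf})_{\boldsymbol{L}^{2}(\Omega)}=\|\bar{\ysf}-\hat{\ysf}\|_{\boldsymbol{L}^{2}(\Omega)}^{2}+(\bar{\ysf}-\hat{\ysf},\hat{\ysf}-\bar{\ysf}_{\T})_{\boldsymbol{L}^{2}(\Omega)}$, whose leading term carries a favourable sign and can be discarded; every surviving term is then a product of an error by an estimator. A round of Young's inequality absorbs the $\|\bar{\usf}-\tilde{\usf}\|_{\boldsymbol{L}^{2}(\Omega)}$ factors and delivers a bound on $\|\bar{\usf}-\tilde{\usf}\|_{\boldsymbol{L}^{2}(\Omega)}^{2}$ purely in terms of $\eta_{\ysf}$, $\eta_{\wsf}$ and $\eta_{\usf}$, the weight $\mu=4\vartheta^{-2}$ arising precisely from this absorption.

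With the cycle broken, I would back-substitute along the now acyclic chain, bounding first $\|\boldsymbol{\esf}_{\usf}\|_{\boldsymbol{L}^{2}(\Omega)}^{2}$, then $\normv{\boldsymbol{\esf}_{\ysf}}_{\mathbf{V},\Omega}^{2}$, then $\normv{\boldsymbol{\esf}_{\wsf}}_{\mathbf{V},\Omega}^{2}$, add the two pressure bounds, and collect terms; each triangle/Young step contributes a factor $2$ and a power of $\mathtt{C}_{\Omega}$, which is what assembles the explicit constants \eqref{eq:Cy}, \eqref{eq:Cw}, \eqref{eq:Cu} and the coefficients $2\varrho$ on the pressure estimators. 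I expect the control step to be the main obstacle: it is there that the duality identity must be deployed to trade the genuinely circular dependence for error-times-estimator products, and that the Young constants must be chosen so that all error contributions absorb for \emph{every} $\vartheta>0$; the high powers of $\mathtt{C}_{\Omega}$ (up to $14$ in \eqref{eq:Cu}) are the unavoidable price of chaining the Poincar\'e-type bound \eqref{C_Omega} through the state, adjoint and control couplings.
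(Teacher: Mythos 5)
Your proposal is correct and reproduces the paper's overall architecture --- triangle inequalities through the auxiliary pairs $(\hat{\ysf},\hat{\psf})$, $(\hat{\wsf},\hat{\qsf})$ and $\tilde{\usf}$, the two variational inequalities tested with $\usf=\tilde{\usf}$ and $\usf=\bar{\usf}$, the inf--sup treatment of the two pressures, and back-substitution along the chain control $\to$ state $\to$ adjoint --- but it departs from the paper in the one step that matters, the bound on $\|\bar{\usf}-\tilde{\usf}\|_{\boldsymbol{L}^2(\Omega)}$. The paper introduces two further auxiliary problems: $(\tilde{\ysf},\tilde{\psf})$ solving the state equation with control $\tilde{\usf}$, and $(\tilde{\wsf},\tilde{\qsf})$ solving the adjoint equation with data $\tilde{\ysf}-\ysf_{\Omega}$; splitting $\bar{\wsf}-\bar{\wsf}_{\T}$ through $\tilde{\wsf}$ and $\hat{\wsf}$, the duality computation gives $(\tilde{\usf}-\bar{\usf},\bar{\wsf}-\tilde{\wsf})_{\boldsymbol{L}^2(\Omega)}=-\|\bar{\ysf}-\tilde{\ysf}\|_{\boldsymbol{L}^2(\Omega)}^2\le 0$, so the dangerous term vanishes outright, the surviving pieces $\|\tilde{\wsf}-\hat{\wsf}\|_{\boldsymbol{L}^2(\Omega)}$ and $\|\hat{\wsf}-\bar{\wsf}_{\T}\|_{\boldsymbol{L}^2(\Omega)}$ are controlled purely by estimators, and a single Young step against $\tfrac{\vartheta}{2}\|\bar{\usf}-\tilde{\usf}\|_{\boldsymbol{L}^2(\Omega)}^2$ finishes. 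Your duality identity $(\boldsymbol{\esf}_{\usf},\bar{\wsf}-\hat{\wsf})_{\boldsymbol{L}^2(\Omega)}=(\boldsymbol{\esf}_{\ysf},\bar{\ysf}-\hat{\ysf})_{\boldsymbol{L}^2(\Omega)}$ is correct (test each hat-difference system with the other's solenoidal solution and invoke \eqref{ac_link}) and buys the economy of never introducing $(\tilde{\ysf},\tilde{\psf},\tilde{\wsf},\tilde{\qsf})$; the price is that after exploiting the sign of $-\|\bar{\ysf}-\hat{\ysf}\|_{\boldsymbol{L}^2(\Omega)}^2$ you are still left with cross terms such as $(\bar{\wsf}-\hat{\wsf},\tilde{\usf}-\bar{\usf}_{\T})_{\boldsymbol{L}^2(\Omega)}$ that contain genuine errors ($\|\boldsymbol{\esf}_{\ysf}\|_{\boldsymbol{L}^2(\Omega)}$ and hence $\|\bar{\usf}-\tilde{\usf}\|_{\boldsymbol{L}^2(\Omega)}$ again), so additional Young absorptions are needed and must be checked to close for every $\vartheta>0$ --- they do, but the bookkeeping is heavier than your sketch acknowledges. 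The one substantive caveat: the theorem asserts the bound with the \emph{specific} constants \eqref{eq:Cy}--\eqref{eq:Cu}, which are the output of the paper's tilde-based decomposition; your route will generally produce different coefficients in front of $\eta_{\ysf}^2$, $\eta_{\wsf}^2$ and $\eta_{\usf}^2$, so as written you obtain a reliability estimate of the stated form but not verbatim the stated $\Upsilon^2$. To land exactly on \eqref{eq:Cy}--\eqref{eq:Cu} you should either follow the tilde route or carry your constant bookkeeping to completion and verify that your constants are no larger.
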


\begin{proof}
We proceed in 6 steps.
~\\
\noindent Step 1. The goal of this step is to control the term $\|\boldsymbol{\esf}_{\usf}\|_{{\boldsymbol{L}^2(\Omega)}}$. We begin with a simple application of the triangle inequality to write
\begin{equation}\label{u-uh-utilde}
 \|\boldsymbol{\esf}_{\usf}\|_{\boldsymbol{L}^2(\Omega)}^{2} \leq 2 \|\bar{\usf}-\tilde{\usf}\|_{\boldsymbol{L}^2(\Omega)}^{2}+2\|\tilde{\usf}-\bar{\usf}_{\T}\|_{\boldsymbol{L}^2(\Omega)}^{2}=2\|  \bar{\usf} -\tilde{\usf} \|_{\boldsymbol{L}^2(\Omega)}^{2} + 2\eta_{\usf}^{2},
\end{equation}
where $\tilde{\usf} = \Pi_{[\aasf,\bbsf]} \left(- \tfrac{1}{\vartheta} \bar{\wsf}_{\T}\right)$ and $\eta_{\usf}$ is defined as in \eqref{eq:indicator_control}. 

Let us now bound the first term on the right hand side of \eqref{u-uh-utilde}. To accomplish this task we first observe a key property that the auxiliary control variable $\tilde{\usf}$ satisfies:
\begin{equation}\label{tildeu}
(\bar{\wsf}_{\T}+\vartheta \tilde{\usf}, \usf - \tilde{\usf})_{\boldsymbol{L}^2(\Omega)} \geq 0 \quad \forall \usf \in \mathbf{U}_{\textrm{ad}};
\end{equation}
see Lemma 2.26 and Theorem 2.28 in \cite{MR2583281}. Set $\usf = \tilde \usf$ in the variational inequality of \eqref{optimal_weak_system} and $\usf = \bar \usf$ in \eqref{tildeu}. We thus obtain that
\begin{equation*}
(\bar{\wsf}+\vartheta\bar{\usf},\tilde{\usf}-\bar{\usf})_{\boldsymbol{L}^{2}(\Omega)} \geq 0, \quad
(\bar{\wsf}_{\T}+\vartheta \tilde{\usf}, \bar{\usf} - \tilde{\usf})_{\boldsymbol{L}^2(\Omega)} \geq 0,
\end{equation*}
and, consequently, that
\begin{equation}\label{eq:inter}
\vartheta\| \bar{\usf} - \tilde{\usf} \|^2_{\boldsymbol{L}^2(\Omega)} \leq (\bar{\wsf} - \bar{\wsf}_{\T}, \tilde{\usf} - \bar{\usf})_{\boldsymbol{L}^2(\Omega)}.
\end{equation}

In order to bound the right hand side of \eqref{eq:inter}, we first define $(\tilde{\ysf},\tilde{\psf})\in \mathbf{V} \times Q$ as the solution to
\begin{equation}\label{eq:y_tilde,p_tilde}
\left\{
\begin{array}{rcll}
\bia(\tilde{\ysf},\xisf)-\bib(\xisf,\tilde{\psf}) 
& = &
\left(\fsf+\tilde{\usf},\xisf\right)_{\boldsymbol{L}^{2}(\Omega)} &
\forall~\xisf\in \mathbf{V},\\
\bib(\tilde{\ysf},\phi) & = & 0 &
\forall~\phi\in Q.
\end{array}
\right.
\end{equation}
In addition, we define $(\tilde{\wsf},\tilde{\qsf}) \in \mathbf{V} \times Q$ as the solution to
\begin{equation}\label{eq:w_tilde,q_tilde}
\left\{
\begin{array}{rcll}
\bic(\tilde{\wsf},\zetasf)+\bib(\zetasf,\tilde{\qsf}) & = & (\tilde{\ysf}-\ysf_{\Omega},\zetasf)_{\boldsymbol{L}^{2}(\Omega)} &
\forall~\zetasf\in\mathbf{V},\\
\bib(\tilde{\wsf},\psi) & = & 0 &
\forall ~\psi\in Q.
\end{array}
\right.
\end{equation}
Utilizing the states $\hat \wsf$ and $\tilde \wsf$ defined as the solutions to \eqref{eq:w_hat-q_hat} and \eqref{eq:w_tilde,q_tilde}, respectively, we arrive at
\begin{multline*}
\vartheta\| \bar{\usf} - \tilde{\usf} \|^2_{\boldsymbol{L}^2(\Omega)}
\leq 
(\bar{\wsf}  - \tilde{\wsf}, \tilde{\usf} - \bar{\usf})_{\boldsymbol{L}^2(\Omega)} + 
(\tilde{\wsf}  - \hat{\wsf}, \tilde{\usf} - \bar{\usf})_{\boldsymbol{L}^2(\Omega)} +
(\hat{\wsf} - \bar{\wsf}_{\T}, \tilde{\usf} - \bar{\usf})_{\boldsymbol{L}^2(\Omega)} \\
\leq 
(\bar{\wsf}  - \tilde{\wsf}, \tilde{\usf} - \bar{\usf})_{\boldsymbol{L}^2(\Omega)} + 
\tfrac{1}{\vartheta}\|\tilde{\wsf}  - \hat{\wsf}\|_{\boldsymbol{L}^{2}(\Omega)}^{2} +
\tfrac{1}{\vartheta}\|\hat{\wsf} - \bar{\wsf}_{\T}\|_{\boldsymbol{L}^{2}(\Omega)}^{2} +
\tfrac{\vartheta}{2}
\|\bar{\usf} - \tilde{\usf}\|_{\boldsymbol{L}^2(\Omega)}^{2}
\end{multline*}
upon using Cauchy--Schwarz and Young's inequalities. Hence,
\begin{equation}
\| \bar{\usf} - \tilde{\usf} \|^2_{\boldsymbol{L}^2(\Omega)} \leq 
\tfrac{2}{\vartheta}(\bar{\wsf}  - \tilde{\wsf}, \tilde{\usf} - \bar{\usf})_{\boldsymbol{L}^2(\Omega)} + 
\tfrac{2}{\vartheta^2}\left( \|\tilde{\wsf}  - \hat{\wsf}\|_{\boldsymbol{L}^{2}(\Omega)}^{2} +
\|\hat{\wsf} - \bar{\wsf}_{\T}\|_{\boldsymbol{L}^{2}(\Omega)}^{2} \right).
\label{eq:first_estimate_u}
\end{equation}

We proceed to bound $(\bar{\wsf}  - \tilde{\wsf}, \tilde{\usf} - \bar{\usf})_{\boldsymbol{L}^2(\Omega)}$. To accomplish this task, we first notice that, since $(\bar{\wsf},\bar \qsf)$ solves the adjoint problem of the optimality system \eqref{optimal_weak_system} and $(\tilde \wsf, \tilde \qsf)$ solves \eqref{eq:w_tilde,q_tilde}, the fact that $\bar \psf - \tilde \psf \in Q$ implies that $\bib(\bar{\wsf}-\tilde{\wsf},\tilde{\psf}-\bar{\psf})=0$. Thus, since $(\bar{\ysf},\bar \psf)$ and $(\tilde{\ysf},\tilde \psf)$ solve \eqref{optimal_weak_system} and \eqref{eq:y_tilde,p_tilde}, respectively, we arrive at
\[
 ( \bar{\usf} - \tilde{\usf},\bar{\wsf}  - \tilde{\wsf} )_{\boldsymbol{L}^2(\Omega)} = \bia(\bar{\ysf} - \tilde{\ysf},\bar{\wsf}  - \tilde{\wsf}). 
\]
We now invoke \eqref{ac_link} and, again, the fact that $(\bar \wsf, \bar \qsf)$ and $(\tilde \wsf, \tilde \qsf)$ solve \eqref{optimal_weak_system} and \eqref{eq:w_tilde,q_tilde}, respectively, to obtain that
\begin{equation}
 (\tilde{\usf} - \bar{\usf},\bar{\wsf}  - \tilde{\wsf} )_{\boldsymbol{L}^2(\Omega)} = \bia(\tilde{\ysf} -\bar{\ysf},\bar{\wsf}  - \tilde{\wsf}) = \bic(\bar{\wsf}-\tilde{\wsf},\tilde{\ysf}-\bar{\ysf}) = - \|\bar{\ysf}-\tilde{\ysf}\|_{\boldsymbol{L}^{2}(\Omega)}^{2} \leq 0,
\label{eq:leq0}
\end{equation}
upon noticing that, since $(\bar{\ysf},\bar \psf)$ solves the state equations of the optimality system \eqref{optimal_weak_system} and $(\tilde \ysf, \tilde \psf)$ solves \eqref{eq:y_tilde,p_tilde}, the fact that $\bar \qsf - \tilde \qsf \in Q$ implies that $\bib(\bar{\ysf}-\tilde{\ysf},\bar \qsf - \tilde \qsf)=0$.

Using the previous estimate in \eqref{eq:first_estimate_u} we obtain that
\begin{equation}\label{eq:usf-tildeusf_2}
\| \bar{\usf} - \tilde{\usf} \|^2_{\boldsymbol{L}^2(\Omega)} \leq  
\tfrac{2}{\vartheta^2}\|\tilde{\wsf}  - \hat{\wsf}\|_{\boldsymbol{L}^{2}(\Omega)}^{2} +
\tfrac{2}{\vartheta^2}\|\hat{\wsf} - \bar{\wsf}_{\T}\|_{\boldsymbol{L}^{2}(\Omega)}^{2}.
\end{equation}

The control of the second term on the right hand side of \eqref{eq:usf-tildeusf_2} follows from \eqref{C_Omega} and \textbf{Assumption 2}:
\begin{equation*}
\|\hat{\wsf} - \bar{\wsf}_{\T}\|_{\boldsymbol{L}^{2}(\Omega)}^{2} 
\leq \texttt{C}_{\Omega}^{2}\eta_{\wsf}^{2}.
\end{equation*}

We now turn our attention to bounding the term $\|\tilde{\wsf} - \hat{\wsf}\|_{\boldsymbol{L}^{2}(\Omega)}$. Applying similar arguments to the ones that lead to \eqref{eq:leq0} we obtain that
\begin{equation}
\begin{aligned}
\norm{\tilde{\wsf}-\hat{\wsf}}_{\mathbf{V},\Omega}^{2} 
& = 
\bic(\tilde{\wsf}-\hat{\wsf},\tilde{\wsf}-\hat{\wsf})
= 
(\tilde{\ysf}-\bar{\ysf}_{\T},\tilde{\wsf}-\hat{\wsf})_{\boldsymbol{L}^{2}(\Omega)}
 \\
&
 \leq \texttt{C}_{\Omega}
\|\tilde{\ysf}-\bar{\ysf}_{\T}\|_{\boldsymbol{L}^{2}(\Omega)}
\normv{\tilde{\wsf}-\hat{\wsf}}_{\mathbf{V},\Omega},
\label{eq:aux_estimate}
\end{aligned}
\end{equation}
where we have also used \eqref{C_Omega}. Consequently,
$
\|\tilde{\wsf}  - \hat{\wsf}\|_{\boldsymbol{L}^{2}(\Omega)}^{2} \leq 
\texttt{C}_{\Omega}^{4}
\|\tilde{\ysf}-\bar{\ysf}_{\T}\|_{\boldsymbol{L}^{2}(\Omega)}^{2},
$
upon using, again, \eqref{C_Omega}.
It thus suffices to bound $\|\tilde{\ysf}-\bar{\ysf}_{\T}\|_{\boldsymbol{L}^{2}(\Omega)}$. We proceed as follows:
\begin{equation*}
\|\tilde{\ysf}-\bar{\ysf}_{\T}\|_{\boldsymbol{L}^{2}(\Omega)}^{2}
\leq
2
\|\tilde{\ysf}-\hat{\ysf}\|_{\boldsymbol{L}^{2}(\Omega)}^{2}
+
2\|\hat{\ysf}-\bar{\ysf}_{\T}\|_{\boldsymbol{L}^{2}(\Omega)}^{2}.
\end{equation*}
To control the second term on the right hand side of the previous expression, we invoke \textbf{Assumption 1} and \eqref{C_Omega}. We thus conclude that
\begin{equation*}
\|\hat{\ysf}-\bar{\ysf}_{\T}\|_{\boldsymbol{L}^{2}(\Omega)}^{2}
\leq \texttt{C}_{\Omega}^{2}
\eta_{\ysf}^{2}.
\end{equation*}
To bound the first term, we employ that $(\hat \ysf, \hat \psf)$ and $(\tilde \ysf, \tilde \psf)$ solve \eqref{eq:y_hat-p_hat} and \eqref{eq:y_tilde,p_tilde}, respectively. This, on the basis of $\nabla \cdot \csf = 0$ and \eqref{C_Omega}, yields
\begin{equation}
\begin{aligned}
\norm{\tilde{\ysf}-\hat{\ysf}}_{\mathbf{V},\Omega}^{2}
& =
\bia(\tilde{\ysf}-\hat{\ysf},\tilde{\ysf}-\hat{\ysf}) 
=
(\tilde{\usf}-\bar{\usf}_{\T},\tilde{\ysf}-\hat{\ysf})_{\boldsymbol{L}^{2}(\Omega)} \\
& \leq 
\texttt{C}_{\Omega} \|\tilde{\usf}-\bar{\usf}_{\T}\|_{\boldsymbol{L}^{2}(\Omega)}\norm{\tilde{\ysf}-\hat{\ysf}}_{\mathbf{V},\Omega},
\end{aligned}
\label{eq:similar_arguments}
\end{equation}
which allows us to conclude, in view of \eqref{eq:indicator_control} and \eqref{C_Omega}, that 
\begin{equation*}
\|\tilde{\ysf}-\hat{\ysf}\|_{\boldsymbol{L}^{2}(\Omega)}^{2}
\leq 
\texttt{C}_{\Omega}^{4}
\eta_{\usf}^{2}.
\end{equation*}

On the basis of \eqref{u-uh-utilde} and \eqref{eq:usf-tildeusf_2}, we combine our previous findings 
and arrive at
\begin{align}\label{control_error_bound}
\|\boldsymbol{\esf}_{\usf}\|_{\boldsymbol{L}^{2}(\Omega)}^{2} 
& \leq
2\mu
 \texttt{C}_{\Omega}^{6}
\eta_{\ysf}^{2} 
+ 
\mu
\texttt{C}_{\Omega}^{2}
\eta_{\wsf}^{2} 
+
\left(2 + 2\mu
\texttt{C}_{\Omega}^{8}
\right)
\eta_{\usf}^{2},
\end{align}
where $\mu = 4 \vartheta^{-2}$.
~\\
\noindent Step 2. The goal of this step is to bound $\norm{\boldsymbol{\esf}_{\ysf}}_{\mathbf{\mathbf{V}},\Omega}$. To accomplish this task, we apply the triangle inequality and invoke \textbf{Assumption 1}. In fact,
\begin{equation}\label{triangle_y}
\normv{\boldsymbol{\esf}_{\ysf}}_{\mathbf{\mathbf{V}},\Omega}^{2} 
\leq 
2\normv{\bar{\ysf}-\hat{\ysf}}_{\mathbf{\mathbf{V}},\Omega}^{2} 
+
2\normv{\hat{\ysf}-\bar{\ysf}_{\T}}_{\mathbf{\mathbf{V}},\Omega}^{2} 
\leq 
2\normv{\bar{\ysf}-\hat{\ysf}}_{\mathbf{\mathbf{V}},\Omega}^{2} 
+
2\eta_{\ysf}^{2}
.
\end{equation}
To control the remaining term we employ similar ideas to the ones that lead to \eqref{eq:similar_arguments}. These arguments reveal that
\begin{equation}
\normv{\bar{\ysf}-\hat{\ysf}}_{\mathbf{\mathbf{V}},\Omega}^2
\leq \texttt{C}_{\Omega}^2
\|\bar{\usf}-\bar{\usf}_{\T}\|^2_{\boldsymbol{L}^{2}(\Omega)},
\label{bary-haty}
\end{equation}
which combined with \eqref{control_error_bound} and \eqref{triangle_y}, implies the error estimate
\begin{equation}
\begin{aligned}
\normv{\boldsymbol{\esf}_{\ysf}}_{\mathbf{\mathbf{V}},\Omega}^{2}
\leq
2\left(
2\mu
\texttt{C}_{\Omega}^{8}
+ 1
\right)\eta_{\ysf}^{2}
+ 
2\mu
\texttt{C}_{\Omega}^{4}
\eta_{\wsf}^{2}
+
2\texttt{C}_{\Omega}^{2}\left(2+2\mu
\texttt{C}_{\Omega}^{8}
\right)
\eta_{\usf}^{2}.
\label{state_error_bound}
\end{aligned}
\end{equation}
\noindent Step 3. We now bound the term $\normv{\boldsymbol{\esf}_{\wsf}}_{\mathbf{V},\Omega}$. To accomplish this task, we use, again, the triangle inequality and \textbf{Assumption 2} to obtain that
\begin{equation}\label{triangle_w}
\normv{\boldsymbol{\esf}_{\wsf}}_{\mathbf{V},\Omega}^{2} 
\leq 
2\normv{\bar{\wsf}-\hat{\wsf}}_{\mathbf{V},\Omega}^{2} 
+
2\eta_{\wsf}^{2}.
\end{equation}
To bound $\normv{\bar{\wsf}-\hat{\wsf}}_{\mathbf{V},\Omega}^{2}$ we invoke  the optimality system \eqref{optimal_weak_system} and \eqref{eq:w_hat-q_hat}. In fact, the arguments that allow us to obtain \eqref{eq:aux_estimate} immediately yield
\begin{align*}
\normv{\bar{\wsf}-\hat{\wsf}}_{\mathbf{V},\Omega}^{2} & = \bic(\bar{\wsf}-\hat{\wsf},\bar{\wsf}-\hat{\wsf}) 
= (\bar{\ysf}-\bar{\ysf}_{\T},\bar{\wsf}-\hat{\wsf})_{\boldsymbol{L}^{2}(\Omega)} 
\\
& \leq \|\bar{\ysf}-\bar{\ysf}_{\T}\|_{\boldsymbol{L}^{2}(\Omega)}\|\bar{\wsf}-\hat{\wsf}\|_{\boldsymbol{L}^{2}(\Omega)}
\end{align*}
upon using a Cauchy--Schwarz inequality. In view of \eqref{C_Omega}, we conclude that
\begin{equation}\label{bar(w)-hat(w)}
\normv{\bar{\wsf}-\hat{\wsf}}_{\mathbf{V},\Omega}^{2} \leq
\texttt{C}_{\Omega}^{4}\normv{\bar{\ysf}-\bar{\ysf}_{\T}}_{\mathbf{V},\Omega}^{2},
\end{equation}
which, combined with the estimates \eqref{state_error_bound} and \eqref{triangle_w}, yields 
\begin{equation}
\begin{aligned}
\normv{\boldsymbol{\esf}_{\wsf}}_{\mathbf{V},\Omega}^{2}
& \leq
4\texttt{C}_{\Omega}^{4}
\left(
2\mu
\texttt{C}_{\Omega}^{8}
+ 1
\right)
\eta_{\ysf}^{2} 
+ 2 \left(
2\mu
\texttt{C}_{\Omega}^{8}
+1\right) 
\eta_{\wsf}^{2}
+ 4\texttt{C}_{\Omega}^{6}
\left(2 + 2\mu
\texttt{C}_{\Omega}^{8}
\right)
\eta_{\usf}^{2}.
\end{aligned}
\label{adjoint_error_bound}
\end{equation}
\noindent Step 4. We now bound $\norm{\esf_{\psf}}_{Q,\Omega}$. We start with a simple application of the triangle inequality and \textbf{Assumption 1}:
\[
\normp{\esf_{\psf}}_{Q,\Omega}^{2}
\leq 2
\normp{\bar{\psf}-\hat{\psf}}_{Q,\Omega}^{2} + 2\normp{\hat{\psf}-\bar{\psf}_{\T}}_{Q,\Omega}^{2}
\leq 
2\normp{\bar{\psf}-\hat{\psf}}_{Q,\Omega}^{2} + 2\eta_{\psf}^{2};
\]
we recall that $( \hat \ysf, \hat \psf)$ solves \eqref{eq:y_hat-p_hat}. To control the first term on the right hand side of the previous expression, we utilize the inf-sup condition \eqref{inf-sup_2}:
\begin{equation}
\label{inf-sup3}
\norm{\bar{\psf}-\hat{\psf}}_{Q,\Omega}\leq \texttt{C}_{\textsf{is}}\sup_{\xisf\in \mathbf{V}\setminus\{\boldsymbol{0}\}}
\frac{\bib(\xisf,\bar{\psf}-\hat{\psf})}{\norm{\xisf}_{\mathbf{V},\Omega}}.
\end{equation}
Since $(\bar \ysf, \bar \psf)$ and $(\hat \ysf, \hat \psf)$ solve \eqref{optimal_weak_system} and \eqref{eq:y_hat-p_hat}, respectively, we conclude that
\begin{align*}
\bib(\xisf,\bar{\psf}-\hat{\psf})
& =
\bia(\bar{\ysf}-\hat{\ysf},\xisf)
-
(\bar{\usf}-\bar{\usf}_{\T},\xisf)_{\boldsymbol{L}^{2}(\Omega)}
 \\
& \leq 
\left(
\mathtt{C}_{\textsf{ct}}\normv{\bar{\ysf}-\hat{\ysf}}_{\mathbf{V},\Omega}
+
\texttt{C}_{\Omega} \|\bar{\usf}-\bar{\usf}_{\T}\|_{\boldsymbol{L}^{2}(\Omega)}
\right)
\normv{\xisf}_{\mathbf{V},\Omega},
\end{align*}
upon using \eqref{C_Omega} and \eqref{eq:a_continuous}. In view of \eqref{bary-haty} we thus arrive at
\begin{equation*}
\bib(\xisf,\bar{\psf}-\hat{\psf}) \leq \texttt{C}_{\Omega} ( 1 + \mathtt{C}_{\textsf{ct}} )
 \|\bar{\usf}-\bar{\usf}_{\T}\|_{\boldsymbol{L}^{2}(\Omega)} \normv{\xisf}_{\mathbf{V},\Omega}.
\end{equation*}
This and \eqref{inf-sup3} imply that
$
\normp{\bar{\psf}-\hat{\psf}}_{Q,\Omega}\leq \texttt{C}_{\textsf{is}}\texttt{C}_{\Omega} ( 1 + \mathtt{C}_{\textsf{ct}}) \|\bar{\usf}-\bar{\usf}_{\T}\|_{\boldsymbol{L}^{2}(\Omega)}.
$
Thus,
\begin{equation}
\normp{\esf_{\psf}}_{Q,\Omega}^{2} \leq 2\omega \texttt{C}_{\Omega}^2
\|\bar{\usf}-\bar{\usf}_{\T}\|_{\boldsymbol{L}^{2}(\Omega)}^2 + 2\eta_{\psf}^{2},
 \label{eq:p-pt}
\end{equation}
where $\omega =\texttt{C}_{\textsf{is}}^2 ( 1 + \mathtt{C}_{\textsf{ct}})^2 $.
We conclude the estimate for $\normp{\esf_{\psf}}_{Q,\Omega}^{2}$ by invoking 
\eqref{control_error_bound}:
\begin{equation}\label{pfinal_error_bound}
\begin{aligned}
\normp{\esf_{\psf}}_{Q,\Omega}^{2}  
\leq &
4\mu
\omega
 \texttt{C}_{\Omega}^{8}
\eta_{\ysf}^{2}
+ 
2\mu
\omega
\texttt{C}_{\Omega}^{4}
\eta_{\wsf}^{2}
+2\omega\texttt{C}_{\Omega}^{2}
\left(2 + 2\mu
\texttt{C}_{\Omega}^{8}
\right)
\eta_{\usf}^{2}
+
2\eta_{\psf}^{2}.
\end{aligned}
\end{equation}
\noindent Step 5. We bound $\norm{\esf_{\qsf}}_{Q,\Omega}$. Similar arguments to the ones employed in the previous step yield
\[
\normp{\esf_{\qsf}}_{Q,\Omega}^{2}
\leq 2
\normp{\bar{\qsf}-\hat{\qsf}}_{Q,\Omega}^{2} + 2\normp{\hat{\qsf}-\bar{\qsf}_{\T}}_{Q,\Omega}^{2}
\leq 
2\normp{\bar{\qsf}-\hat{\qsf}}_{Q,\Omega}^{2} + 2\eta_{\qsf}^{2}
\]
and
\begin{align*}
\normp{\bar{\qsf}-\hat{\qsf}}_{Q,\Omega}
& \leq \texttt{C}_{\textsf{is}}
\sup_{\zetasf\in\mathbf{V}\setminus\{\boldsymbol{0}\}}
\frac{
(\bar{\ysf}-\bar{\ysf}_{\T},\zetasf)_{\boldsymbol{L}^{2}(\Omega)}
-
\bic(\bar{\wsf}-\hat{\wsf},\zetasf)
}{\normv{\zetasf}_{\mathbf{V},\Omega}} \\
& \leq \texttt{C}_{\textsf{is}}
\left(
\texttt{C}_{\Omega}^{2}\normv{\bar{\ysf}-\bar{\ysf}_{\T}}_{\mathbf{V},\Omega}
+
\mathtt{C}_{\textsf{ct}} \normv{\bar{\wsf}-\hat{\wsf}}_{\mathbf{V},\Omega}
\right).
\end{align*} 
We finally use \eqref{bar(w)-hat(w)}, and conclude that
$
\normp{\bar{\qsf}-\hat{\qsf}}_{Q,\Omega} \leq \texttt{C}_{\textsf{is}}
\texttt{C}_{\Omega}^{2}\left(
1 +  \mathtt{C}_{\textsf{ct}} 
\right)
\normv{\bar{\ysf}-\bar{\ysf}_{\T}}_{\mathbf{V},\Omega},
$
and then that
\begin{equation}
\normp{\esf_{\qsf}}_{Q,\Omega}^{2} \leq 2 \omega
\texttt{C}_{\Omega}^{4}
\normv{\bar{\ysf}-\bar{\ysf}_{\T}}_{\mathbf{V},\Omega}^2
+
2\eta_{\qsf}^{2},
 \label{eq:q-qt}
\end{equation}
where, we recall that, $\omega =\texttt{C}_{\textsf{is}}^2 ( 1 + \mathtt{C}_{\textsf{ct}})^2$. Consequently,
\begin{equation}\label{qfinal_error_bound}
\begin{aligned}
\normp{\esf_{\qsf}}_{Q,\Omega}^{2}
& \leq
4 \omega\texttt{C}_{\Omega}^{4}
\left(
2\mu
\texttt{C}_{\Omega}^{8}
+ 1
\right)
\eta_{\ysf}^{2}
+ 
4\mu\omega
\texttt{C}_{\Omega}^{8}
\eta_{\wsf}^{2} 
+
4
\omega
\texttt{C}_{\Omega}^{6}\left(2+
2\mu
\texttt{C}_{\Omega}^{8}
\right)
\eta_{\usf}^{2}
+
2\eta_{\qsf}^{2}
.
\end{aligned}
\end{equation}
\noindent Step 6. Combining \eqref{control_error_bound}, \eqref{state_error_bound}, \eqref{adjoint_error_bound}, \eqref{pfinal_error_bound} and \eqref{qfinal_error_bound} allows us to arrive at \eqref{eq:reliability}.
\end{proof}


It is important in a posteriori error analysis to have an upper bound for the error that is in terms of local error indicators, so that it can be used to adaptively refine the mesh. Such a bound follows from Theorem \ref{th:global_reliability} under the following two assumptions.

\textbf{Assumption 3.} There exist quantities $\eta_{\ysf,K}$ and $\eta_{\psf,K}$ that are such that
\begin{equation}
\normv{\hat{\ysf}-\bar{\ysf}_{\T}}_{\mathbf{V},\Omega}^2 \leq \sum_{K\in\T}\eta_{\ysf,K}^2\mbox{ and }\normp{\hat{\psf}-\bar{\psf}_{\T}}_{Q,\Omega}^2 \leq \sum_{K\in\T}\eta_{\psf,K}^2.
\end{equation}

\textbf{Assumption 4.} There exist quantities $\eta_{\wsf,K}$ and $\eta_{\qsf,K}$ that are such that
\begin{equation}
\normv{\hat{\wsf}-\bar{\wsf}_{\T}}_{\mathbf{V},\Omega}^2 \leq \sum_{K\in\T}\eta_{\wsf,K}^2\mbox{ and }
\normp{\hat{\qsf}-\bar{\qsf}_{\T}}_{Q,\Omega}^2 \leq \sum_{K\in\T}\eta_{\qsf,K}^2.
\end{equation}

\begin{theorem}[global reliability]
\label{th:global_reliability2}
If \textbf{Assumptions 3} and \textbf{4} hold, then 
\begin{equation}
\label{eq:reliability2}
\norm{(\boldsymbol{\esf}_{\ysf},\esf_{\psf},\boldsymbol{\esf}_{\wsf},\esf_{\qsf},\boldsymbol{\esf}_{\usf})}_{\Omega}^2
\leq 
\sum_{K\in\T}\Upsilon_K^{2}
\end{equation}
where
\begin{equation}
\label{eq:indicator}
\Upsilon_K^{2}:=
\mathfrak{C}_{\ysf} 
\eta_{\ysf,K}^{2} +
2\varrho
\eta_{\psf,K}^{2} +
\mathfrak{C}_{\wsf} 
\eta_{\wsf,K}^{2} +
2\varrho
\eta_{\qsf,K}^{2} +
\mathfrak{C}_{\usf} 
\eta_{\usf,K}^{2},
\end{equation}
and $\mathfrak{C}_{\ysf}$, $\mathfrak{C}_{\wsf}$ and $\mathfrak{C}_{\usf}$ are defined by \eqref{eq:Cy} \eqref{eq:Cw}, and \eqref{eq:Cu}, respectively.
\end{theorem}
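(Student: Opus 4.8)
The plan is to obtain Theorem \ref{th:global_reliability2} as an immediate corollary of Theorem \ref{th:global_reliability}, by recognizing that \textbf{Assumptions 3} and \textbf{4} are nothing more than localized versions of \textbf{Assumptions 1} and \textbf{2}, and that the coefficients $\mathfrak{C}_{\ysf}$, $2\varrho$, $\mathfrak{C}_{\wsf}$, $2\varrho$, $\mathfrak{C}_{\usf}$ multiplying the squared quantities in $\Upsilon^2$ do not depend on the element $K$.

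First, I would set the global quantities of Theorem \ref{th:global_reliability} to be the square roots of the sums of the local indicators. Specifically, \textbf{Assumption 3} guarantees that \textbf{Assumption 1} holds with the choices $\eta_{\ysf}:=(\sum_{K\in\T}\eta_{\ysf,K}^2)^{1/2}$ and $\eta_{\psf}:=(\sum_{K\in\T}\eta_{\psf,K}^2)^{1/2}$, since by construction
\[
\normv{\hat{\ysf}-\bar{\ysf}_{\T}}_{\mathbf{V},\Omega}^2 \leq \sum_{K\in\T}\eta_{\ysf,K}^2 = \eta_{\ysf}^2,
\qquad
\normp{\hat{\psf}-\bar{\psf}_{\T}}_{Q,\Omega}^2 \leq \sum_{K\in\T}\eta_{\psf,K}^2 = \eta_{\psf}^2.
\]
In the same way, \textbf{Assumption 4} yields \textbf{Assumption 2} with $\eta_{\wsf}:=(\sum_{K\in\T}\eta_{\wsf,K}^2)^{1/2}$ and $\eta_{\qsf}:=(\sum_{K\in\T}\eta_{\qsf,K}^2)^{1/2}$. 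For the control, no new identification is needed: the global indicator $\eta_{\usf}$ already satisfies $\eta_{\usf}^2=\sum_{K\in\T}\eta_{\usf,K}^2$ directly from its definition \eqref{eq:indicator_control}.

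Next, I would apply Theorem \ref{th:global_reliability} with these choices. Since its hypotheses are now verified, it delivers
$\norm{(\boldsymbol{\esf}_{\ysf},\esf_{\psf},\boldsymbol{\esf}_{\wsf},\esf_{\qsf},\boldsymbol{\esf}_{\usf})}_{\Omega}^2 \leq \Upsilon^2$. Substituting the above expressions for $\eta_{\ysf}^2,\eta_{\psf}^2,\eta_{\wsf}^2,\eta_{\qsf}^2,\eta_{\usf}^2$ into the definition of $\Upsilon^2$ and using that each global squared quantity is a finite sum over $\T$, I would regroup the terms: because the coefficients $\mathfrak{C}_{\ysf},2\varrho,\mathfrak{C}_{\wsf},2\varrho,\mathfrak{C}_{\usf}$ are constants independent of $K$, the linear combination commutes with the summation, giving
\[
\Upsilon^2 = \sum_{K\in\T}\left(\mathfrak{C}_{\ysf}\,\eta_{\ysf,K}^2 + 2\varrho\,\eta_{\psf,K}^2 + \mathfrak{C}_{\wsf}\,\eta_{\wsf,K}^2 + 2\varrho\,\eta_{\qsf,K}^2 + \mathfrak{C}_{\usf}\,\eta_{\usf,K}^2\right) = \sum_{K\in\T}\Upsilon_K^2,
\]
which is exactly \eqref{eq:reliability2} with $\Upsilon_K^2$ as in \eqref{eq:indicator}.

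There is no genuine analytic obstacle in this argument, as the entire difficulty has already been absorbed into Theorem \ref{th:global_reliability}; the present statement is a reformulation that makes the bound usable for mesh adaptation. The only point demanding care is purely bookkeeping, namely verifying that the element-wise coefficients are indeed constant in $K$ so that the interchange of the fixed linear combination with the finite sum over elements is legitimate.
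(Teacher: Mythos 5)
Your proposal is correct and is exactly the paper's argument: the paper's proof simply states that the result "follows from a simple application of Theorem \ref{th:global_reliability}" under \textbf{Assumptions 3} and \textbf{4}, which is precisely the identification $\eta_{\ysf}^2=\sum_{K\in\T}\eta_{\ysf,K}^2$ (and analogues) followed by regrouping the constant coefficients inside the finite sum. Nothing further is needed.
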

\begin{proof}
In view of \textbf{Assumptions 3} and \textbf{4}, the proof follows from a simple application of the result of Theorem \ref{th:global_reliability}.
\end{proof}

Theorem \ref{th:global_reliability2} can be used to obtain guaranteed upper bounds on the error if the value of a $\beta$ satisfying \eqref{inf-sup} is known and the quantities $\eta_{\ysf,K}$, $\eta_{\psf,K}$, $\eta_{\wsf,K}$ and $\eta_{\qsf,K}$ are computable. If this is not the case then Theorem \eqref{th:global_reliability2} can still be used to arrive at an a posteriori error estimator under the following assumption.

\textbf{Assumption 5.} There exist computable quantities $\tilde{\eta}_{\ysf,K}$, $\tilde{\eta}_{\psf,K}$, $\tilde{\eta}_{\wsf,K}$ and $\tilde{\eta}_{\qsf,K}$ which are such that $\eta_{\ysf,K}\lesssim\tilde{\eta}_{\ysf,K}$, $\eta_{\psf,K}\lesssim\tilde{\eta}_{\psf,K}$, $\eta_{\wsf,K}\lesssim\tilde{\eta}_{\wsf,K}$ and $\eta_{\qsf,K}\lesssim\tilde{\eta}_{\qsf,K}$ for all $K\in\T$.

\begin{corollary}[global reliability]
\label{th:global_reliability3}
If \textbf{Assumptions 3}, \textbf{4} and \textbf{5} hold, then
\begin{equation}
\label{eq:reliability3}
\norm{(\boldsymbol{\esf}_{\ysf},\esf_{\psf},\boldsymbol{\esf}_{\wsf},\esf_{\qsf},\boldsymbol{\esf}_{\usf})}_{\Omega}^2
\lesssim\tilde{\Upsilon}
:=\sum_{K\in\T}\tilde{\Upsilon}_K^{2}
\end{equation}
where
\begin{equation}\label{eq:indicator2}
\tilde{\Upsilon}_K^{2}:=
\tilde{\eta}_{\ysf,K}^{2} +
\tilde{\eta}_{\psf,K}^{2} + 
\tilde{\eta}_{\wsf,K}^{2} +
\tilde{\eta}_{\qsf,K}^{2} +
\tilde{\eta}_{\usf,K}^{2}.
\end{equation}
\end{corollary}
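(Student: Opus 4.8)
The plan is to deduce \eqref{eq:reliability3} by chaining the bound already established in Theorem \ref{th:global_reliability2} with the computable majorants guaranteed by \textbf{Assumption 5}; the argument is purely algebraic and introduces no new analytical ingredient. First I would invoke Theorem \ref{th:global_reliability2}, whose hypotheses (\textbf{Assumptions 3} and \textbf{4}) are among the present hypotheses, to write
\[
\norm{(\boldsymbol{\esf}_{\ysf},\esf_{\psf},\boldsymbol{\esf}_{\wsf},\esf_{\qsf},\boldsymbol{\esf}_{\usf})}_{\Omega}^2
\leq
\sum_{K\in\T}\Upsilon_K^{2},
\]
with $\Upsilon_K^{2}$ given by \eqref{eq:indicator}. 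The task then reduces to dominating each of the five local contributions in $\Upsilon_K^{2}$ by the corresponding term of $\tilde{\Upsilon}_K^{2}$ up to a mesh-independent constant, and summing over $K\in\T$.

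For the four contributions associated with the state and adjoint variables I would use \textbf{Assumption 5}. Since all the quantities involved are nonnegative, the bounds $\eta_{\ysf,K}\lesssim\tilde{\eta}_{\ysf,K}$, $\eta_{\psf,K}\lesssim\tilde{\eta}_{\psf,K}$, $\eta_{\wsf,K}\lesssim\tilde{\eta}_{\wsf,K}$ and $\eta_{\qsf,K}\lesssim\tilde{\eta}_{\qsf,K}$ may be squared to yield $\eta_{\ysf,K}^2\lesssim\tilde{\eta}_{\ysf,K}^2$, and analogously for the remaining three. The coefficients $\mathfrak{C}_{\ysf}$, $\mathfrak{C}_{\wsf}$ and $2\varrho$ appearing in \eqref{eq:indicator} are fixed numbers determined through \eqref{eq:Cy}--\eqref{eq:Cu} by $\vartheta$, $\varrho$, $\mathtt{C}_{\Omega}$, $\mathtt{C}_{\textsf{ct}}$ and $\mathtt{C}_{\textsf{is}}$; crucially, none of them depends on the mesh size, so the products $\mathfrak{C}_{\ysf}\eta_{\ysf,K}^2$, $2\varrho\,\eta_{\psf,K}^2$, $\mathfrak{C}_{\wsf}\eta_{\wsf,K}^2$ and $2\varrho\,\eta_{\qsf,K}^2$ are respectively $\lesssim\tilde{\eta}_{\ysf,K}^2$, $\tilde{\eta}_{\psf,K}^2$, $\tilde{\eta}_{\wsf,K}^2$ and $\tilde{\eta}_{\qsf,K}^2$.

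The remaining control contribution $\mathfrak{C}_{\usf}\eta_{\usf,K}^2$ requires a separate but trivial observation, since \textbf{Assumption 5} supplies no tilde counterpart for the control. The indicator $\eta_{\usf,K}=\|\tilde{\usf}-\bar{\usf}_{\T}\|_{\boldsymbol{L}^2(K)}$ of \eqref{eq:indicator_control} is already fully computable from the discrete solution, because $\tilde{\usf}=\Pi_{[\aasf,\bbsf]}(-\tfrac{1}{\vartheta}\bar{\wsf}_{\T})$ depends only on $\bar{\wsf}_{\T}$. One therefore simply sets $\tilde{\eta}_{\usf,K}:=\eta_{\usf,K}$, so that $\mathfrak{C}_{\usf}\eta_{\usf,K}^2\lesssim\tilde{\eta}_{\usf,K}^2$ holds with constant $\mathfrak{C}_{\usf}$. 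Summing the five local estimates over $K\in\T$ and comparing with \eqref{eq:indicator2} then yields \eqref{eq:reliability3}.

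I do not anticipate a genuine obstacle, as the statement is a direct corollary; the only point deserving care is the bookkeeping of the implied constants, namely checking that the fixed coefficients inherited from Theorem \ref{th:global_reliability2} and the mesh-independent constants concealed in the $\lesssim$ symbols of \textbf{Assumption 5} combine into a single constant independent of the mesh, in accordance with the convention fixed in Section \ref{primeraaa}.
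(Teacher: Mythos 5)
Your proposal is correct and follows the same route as the paper, which simply invokes Theorem \ref{th:global_reliability2} together with \textbf{Assumption 5}; you merely spell out the squaring of the local bounds, the mesh-independence of the coefficients $\mathfrak{C}_{\ysf}$, $\mathfrak{C}_{\wsf}$, $\mathfrak{C}_{\usf}$ and $2\varrho$, and the identification $\tilde{\eta}_{\usf,K}:=\eta_{\usf,K}$, all of which are implicit in the paper's one-line argument. No gap.
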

\begin{proof}
Upon invoking \textbf{Assumptions 3}, \textbf{4} and \textbf{5}, the estimate \eqref{eq:reliability3} is a consequence of Theorem \ref{th:global_reliability2}.
\end{proof}

\subsection{Efficiency analysis}
\label{efficiency}

In this section we prove the local efficiency of the a posteriori error indicators $\Upsilon_K$ and $\tilde{\Upsilon}_K$ defined by \eqref{eq:indicator} and \eqref{eq:indicator2}, respectively. In what follows we will assume that \textbf{Assumptions 3}, \textbf{4} and \textbf{5} are satisfied and that $\varrho\ne0$. In addition, we make two further assumptions. To state them, we first define, for nonnegative integers $l$, the discrete space
\begin{equation}
\mathbb{P}_l(\T)=\left\{\vsf \in\mathbf{L}^{2}(\Omega):\,\vsf_{|K}\in\mathbb{P}_l(K)^d\mbox{ for all }K\in\T\right\}.
\end{equation}

Our first additional assumption reads as follows:

\textbf{Assumption 6.} The spaces $\mathbf{V}(\T)$ and $Q(\T)$ and the set $\mathbf{U}_{ad}(\T)$ are such that
\begin{itemize}
\item $\mathbf{V}(\T)=\mathbf{V}\cap\mathbb{P}_{l_\mathbf{V}}(\T)$ for some positive integer $l_\mathbf{V}$,
\item $Q(\T)=Q\cap\mathbb{P}_{l_Q}(\T)$ for some nonnegative integer $l_Q$
or
$Q(\T)=Q\cap\mathbb{P}_{l_Q}(\T)\cap H^1(\Omega)$ for some positive integer $l_Q$,
\item $\mathbf{U}_{ad}(\T)=\mathbf{U}_{ad}\cap\mathbb{P}_{l_\mathbf{U}}(\T)$ for some nonnegative integer $l_\mathbf{U}$
or
$\mathbf{U}_{ad}(\T)=\mathbf{U}_{ad}\cap\mathbb{P}_{l_\mathbf{U}}(\T)\cap H^1(\Omega)$ for some positive integer $l_\mathbf{U}$.
\end{itemize}

For $K\in\T$, we define the following residuals and oscillation terms:
\begin{equation}
\label{eq:int_res_st}
\boldsymbol{\mathcal{R}}_{K}^{\textsf{st}}:=  
\Pi_{K,m}(\fsf)+\bar{\usf}_{\T|K}+\varepsilon\Delta\ysf_{\T|K}-
\Pi_{K,m}(\left(\csf\cdot\nabla\right)\bar{\ysf}_{\T|K})-\kappa\bar{\ysf}_{\T|K}-\nabla\bar{\psf}_{\T|K},
\end{equation}
\begin{equation}
\boldsymbol{\mathcal{R}}_{K}^{\textsf{ad}}:=  
\bar{\ysf}_{\T|K}-\Pi_{K,m}(\ysf_{\Omega})+\varepsilon\Delta\bar{\wsf}_{\T|K}+
\Pi_{K,m}(\left(\csf\cdot\nabla\right)\bar{\wsf}_{\T|K})-\kappa\bar{\wsf}_{\T|K}+\nabla\bar{\qsf}_{\T|K},
\end{equation}
\begin{equation}
\label{eq:osc_st}
\textbf{\textsf{osc}}_{K}^{\textsf{st}}:= 
\fsf-\Pi_{K,m}(\fsf)-(\left(\csf\cdot\nabla\right)\bar{\ysf}_{\T|K}-\Pi_{K,m}(\left(\csf\cdot\nabla\right)\bar{\ysf}_{\T|K})),
\end{equation}
and
\begin{equation}
\textbf{\textsf{osc}}_{K}^{\textsf{ad}}:= 
-(\ysf_{\Omega}-\Pi_{K,m}(\ysf_{\Omega}))+(\left(\csf\cdot\nabla\right)\bar{\wsf}_{\T|K}-\Pi_{K,m}(\left(\csf\cdot\nabla\right)\bar{\wsf}_{\T|K})),
\end{equation}
where $m=\max\left\{l_\mathbf{V},l_Q-1,l_\mathbf{U}\right\}$. We recall that the operator $\Pi_{K,m}$ is defined as in \eqref{projectionL2K}, and notice that, in view of the choice of $m$, we have the following invariance property: $\Pi_{K,m}(\boldsymbol{\mathcal{R}}_{K}^{\textsf{st}})= \boldsymbol{\mathcal{R}}_{K}^{\textsf{st}}$ and $\Pi_{K,m}(\boldsymbol{\mathcal{R}}_{K}^{\textsf{ad}})= \boldsymbol{\mathcal{R}}_{K}^{\textsf{ad}}$. For $\gamma\in\mathcal{F}_{I}$, we define

\begin{equation}\label{eq:jump_st}
\llbracket \boldsymbol{\mathcal{R}}_{\gamma}^{\textsf{st}} \rrbracket:= 
\sum_{K\in\Omega_\gamma}
\boldsymbol{\mathcal{R}}_{\gamma,K}^{\textsf{st}}
\quad\textrm{with}\quad
\boldsymbol{\mathcal{R}}_{\gamma,K}^{\textsf{st}}:=
-\varepsilon\left(\boldsymbol{n}_{\gamma}^{K}\cdot\nabla\right)\bar{\ysf}_{\T|K}+\bar{\psf}_{\T|K}\boldsymbol{n}_{\gamma}^{K},
\end{equation}
and
\begin{equation}
\llbracket \boldsymbol{\mathcal{R}}_{\gamma}^{\textsf{ad}} \rrbracket:=
\sum_{K\in\Omega_\gamma}\boldsymbol{\mathcal{R}}_{\gamma,K}^{\textsf{ad}}
\quad\textrm{with}\quad
\boldsymbol{\mathcal{R}}_{\gamma,K}^{\textsf{ad}}:=
-\varepsilon\left(\boldsymbol{n}_{\gamma}^{K}\cdot\nabla\right)\bar{\wsf}_{\T|K}-\bar{\qsf}_{\T|K}\boldsymbol{n}_{\gamma}^{K}.
\end{equation}

We now state our final assumption.

\textbf{Assumption 7.} For all $K\in\T$, the computable quantities $\tilde{\eta}_{\ysf,K}$, $\tilde{\eta}_{\psf,K}$ $\tilde{\eta}_{\wsf,K}$, and $\tilde{\eta}_{\qsf,K}$, introduced in \textbf{Assumption 5}, are such that
\begin{align}
\tilde{\Upsilon}_K^2 \lesssim &
\|\nabla\cdot\bar\ysf_\T\|_{L^{2}(K)}^{2}+\|\nabla\cdot\bar\wsf_\T\|_{L^{2}(K)}^{2}
+
\sum_{K'\in\hat{\T}_K}h_K^{2}\left(\|\boldsymbol{\mathcal{R}}_{K'}^{\textsf{st}}\|_{\boldsymbol{L}^{2}(K')}^{2}+\|\boldsymbol{\mathcal{R}}_{K'}^{\textsf{ad}}\|_{\boldsymbol{L}^{2}(K')}^{2}\right) \nonumber
\\
&
+\sum_{\gamma\in\hat{\mathcal{F}}_K}h_K\left(\|\llbracket \boldsymbol{\mathcal{R}}_{\gamma}^{\textsf{st}} \rrbracket\|_{\boldsymbol{L}^{2}(\gamma)}^{2}+\|\llbracket \boldsymbol{\mathcal{R}}_{\gamma}^{\textsf{ad}} \rrbracket\|_{\boldsymbol{L}^{2}(\gamma)}^{2}\right) \nonumber
\\
&
+\sum_{K'\in\hat{\T}_K}h_K^{2}\left(\|\textbf{\textsf{osc}}_{K'}^{\textsf{st}}\|_{\boldsymbol{L}^{2}(K')}^{2}+\|\textbf{\textsf{osc}}_{K'}^{\textsf{ad}}\|_{\boldsymbol{L}^{2}(K')}^{2}\right)+\eta_{\usf,K}^2
 \label{ass2lb}
\end{align}
where $\hat{\T}_K\subset\T$ and $\hat{\mathcal{F}}_K\subset\mathcal{F}_I$.

Under \textbf{Assumptions 3}, \textbf{4}, \textbf{5}, \textbf{6} and \textbf{7} we present an efficiency analysis. We start by noting that, since $\Upsilon_K\lesssim\tilde{\Upsilon}_K$, we only need to bound terms that appear on the right hand side of \eqref{ass2lb}. 

We first invoke integration by parts and \eqref{state_constraint} to conclude that
\begin{equation*}
\begin{split}
&\sum_{K\in\T}
(\boldsymbol{\mathcal{R}}_{K}^{\textsf{st}} ,\xisf)_{\boldsymbol{L}^{2}(K)}
+\sum_{\gamma\in\mathcal{F}_{I}}
(\llbracket \boldsymbol{\mathcal{R}}_{\gamma}^{\textsf{st}} \rrbracket,\xisf)_{\boldsymbol{L}^{2}(\gamma)}
\\
=&
\bia(\boldsymbol{\esf}_{\ysf},\xisf)+\bib(\xisf,\esf_{\psf})
-(\boldsymbol{\esf}_{\usf},\xisf)_{\boldsymbol{L}^{2}(\Omega)}-\sum_{K\in\T}(\textbf{\textsf{osc}}_{K}^{\textsf{st}},\xisf)_{\boldsymbol{L}^{2}(K)} \quad \forall \xisf \in\mathbf{V}.
\end{split}
\end{equation*}
We now apply standard bubble function arguments \cite{AObook,MR3059294} to this equation to obtain
\begin{equation}\label{stelres}
\|\boldsymbol{\mathcal{R}}_{K}^{\textsf{st}} \|_{\boldsymbol{L}^{2}(K)}^2
\lesssim
h_K^{-2}\left(\normv{\boldsymbol{\esf}_{\ysf}}_{\mathbf{V},K}^{2}
+\varrho\normp{\esf_{\psf}}_{Q,K}^{2}\right) + 
\|\boldsymbol{\esf}_{\usf}\|_{\boldsymbol{L}^{2}(K)}^2 +
\|\textbf{\textsf{osc}}_{K}^{\textsf{st}}\|_{\boldsymbol{L}^{2}(K)}^2
\end{equation}
for $K\in\T$, and that, for $\gamma\in\mathcal{F}_I$,
\begin{align}
\|\llbracket \boldsymbol{\mathcal{R}}_{\gamma}^{\textsf{st}} \rrbracket\|_{\boldsymbol{L}^{2}(\gamma)}^2
\lesssim &
\sum_{K'\in\Omega_{\gamma}}
\Big(
h_{K'}^{-1}\left(\normv{\boldsymbol{\esf}_{\ysf}}_{\mathbf{V},K'}^{2}
+\varrho\normp{\esf_{\psf}}_{Q,K'}^{2}\right)\nonumber
\\
&+ h_{K'}\left(\|\boldsymbol{\esf}_{\usf}\|_{\boldsymbol{L}^{2}(K')}^2 +
\|\textbf{\textsf{osc}}_{K'}^{\textsf{st}}\|_{\boldsymbol{L}^{2}(K')}^2\right) \Big).
\label{stefres}
\end{align}

On the other hand, using \eqref{adjoint_state} and, again, integration by parts we obtain that
\begin{equation*}
\begin{split}
&\sum_{K\in\T}
(\boldsymbol{\mathcal{R}}_{K}^{\textsf{ad}} ,\xisf)_{\boldsymbol{L}^{2}(K)}
+\sum_{\gamma\in\mathcal{F}_{I}}
(\llbracket \boldsymbol{\mathcal{R}}_{\gamma}^{\textsf{ad}} \rrbracket,\xisf)_{\boldsymbol{L}^{2}(\gamma)} 
\\
=& \bic(\boldsymbol{\esf}_{\wsf},\xisf)-\bib(\xisf,\esf_{\qsf})
-(\boldsymbol{\esf}_{\ysf},\xisf)_{\boldsymbol{L}^{2}(\Omega)}-\sum_{K\in\T}(\textbf{\textsf{osc}}_{K}^{\textsf{ad}},\xisf)_{\boldsymbol{L}^{2}(K)} \quad \forall \xisf \in\mathbf{V}.
\end{split}
\end{equation*}
Applying standard bubble function arguments, again, to this equation yields
\begin{equation}\label{adelres}
\|\boldsymbol{\mathcal{R}}_{K}^{\textsf{ad}} \|_{\boldsymbol{L}^{2}(K)}^2
\lesssim
h_K^{-2}\left(\normv{\boldsymbol{\esf}_{\wsf}}_{\mathbf{V},K}^{2}
+\varrho\normp{\esf_{\qsf}}_{Q,K}^{2}\right) + 
\|\boldsymbol{\esf}_{\ysf}\|_{\boldsymbol{L}^{2}(K)}^2 +
\|\textbf{\textsf{osc}}_{K}^{\textsf{ad}}\|_{\boldsymbol{L}^{2}(K)}^2
\end{equation}
for $K\in\T$, and, for $\gamma\in\mathcal{F}_I$,
\begin{align}
\|\llbracket \boldsymbol{\mathcal{R}}_{\gamma}^{\textsf{ad}} \rrbracket\|_{L^{2}(\gamma)}^2
\lesssim&
\sum_{K'\in\Omega_{\gamma}}
\Big(
h_{K'}^{-1}\left(\normv{\boldsymbol{\esf}_{\wsf}}_{\mathbf{V},K'}^{2}
+\varrho\normp{\esf_{\qsf}}_{Q,K'}^{2}\right) \nonumber
\\
& + h_{K'}\left(\|\boldsymbol{\esf}_{\ysf}\|_{\boldsymbol{L}^{2}(K')}^2 +
\|\textbf{\textsf{osc}}_{K'}^{\textsf{ad}}\|_{\boldsymbol{L}^{2}(K')}^2\right)
\Big). 
\label{adefres}
\end{align}

We now proceed to bound the terms $\|\nabla\cdot\bar\ysf_\T\|_{L^{2}(K)}^{2}$ and $\|\nabla\cdot\bar\wsf_\T\|_{L^{2}(K)}^{2}$ in \eqref{ass2lb}. To accomplish this task, we notice that $\nabla\cdot\xisf\in Q$ for all $\xisf\in\mathbf{V}$. Then, it follows from the second equation of \eqref{optimal_weak_system} that $\nabla\cdot \bar\ysf = 0$, and thus that
\begin{equation}\label{divyTbound}
\|\nabla\cdot\bar\ysf_\T\|_{L^{2}(K)}^{2}=\|\nabla\cdot\boldsymbol{\esf}_{\ysf}\|_{L^{2}(K)}^{2}\lesssim\normv{\boldsymbol{\esf}_{\ysf}}_{\mathbf{V},K}^{2}.
\end{equation}
Similarly, it follows from the fourth equation of \eqref{optimal_weak_system} that
\begin{equation}\label{divwTbound}
\|\nabla\cdot\bar\wsf_\T\|_{L^{2}(K)}^{2}=\|\nabla\cdot\boldsymbol{\esf}_{\wsf}\|_{L^{2}(K)}^{2}\lesssim\normv{\boldsymbol{\esf}_{\wsf}}_{\mathbf{V},K}^{2}.
\end{equation}

We conclude with an estimate for the term $\eta_{\usf,K}$ defined by \eqref{eq:indicator_control}:
\begin{equation*}
\eta_{\usf,K}
 \leq 
\left\| \boldsymbol{\esf}_{\usf}  \right\|_{\boldsymbol{L}^2(K)} + 
\left\| \Pi_{[\aasf,\bbsf]} (-\tfrac{1}{\vartheta}\bar{\wsf})-\Pi_{[\aasf,\bbsf]} (-\tfrac{1}{\vartheta}\bar{\wsf}_{\T})  \right\|_{\boldsymbol{L}^2(K)} 
 \leq 
\left\| \boldsymbol{\esf}_{\usf}  \right\|_{\boldsymbol{L}^2(K)} + 
\tfrac{1}{\vartheta}
\|\boldsymbol{\esf}_{\wsf}\|_{\boldsymbol{L}^{2}(K)}
\end{equation*}
upon invoking the triangle inequality, \eqref{projection_formula}, and \eqref{eq:projproperty}. Hence,
\begin{equation}\label{controleff}
\eta_{\usf,K}^2\lesssim
\left\| \boldsymbol{\esf}_{\usf}  \right\|_{\boldsymbol{L}^2(K)}^2 + 
\|\boldsymbol{\esf}_{\wsf}\|_{\boldsymbol{L}^{2}(K)}^2.
\end{equation}

The following theorem then follows upon combining \eqref{ass2lb}--\eqref{controleff}.

\begin{theorem}[local efficiency]
If $\varrho\ne0$ and \textbf{Assumptions 3}, \textbf{4}, \textbf{5}, \textbf{6} and \textbf{7} hold, then
\begin{equation*}
\begin{split}
\Upsilon_{K}^{2}\lesssim\tilde{\Upsilon}_{K}^{2}\lesssim&
\|\boldsymbol{\esf}_{\wsf}\|_{\boldsymbol{L}^{2}(K)}^2
+
\sum_{K'\in \tilde{\Omega}_K}
\bigg(
\norm{(\boldsymbol{\esf}_{\ysf},\esf_{\psf},\boldsymbol{\esf}_{\wsf},\esf_{\qsf},\boldsymbol{\esf}_{\usf})}_{K'}^2
\\
&+h_{K'}^{2}\left(
\|\boldsymbol{\esf}_{\usf}\|_{\boldsymbol{L}^{2}(K')}^2 +
\|\boldsymbol{\esf}_{\ysf}\|_{\boldsymbol{L}^{2}(K')}^2
+
\|\textbf{\textsf{osc}}_{K'}^{\textsf{st}}\|_{\boldsymbol{L}^{2}(K')}^2 +
\|\textbf{\textsf{osc}}_{K'}^{\textsf{ad}}\|_{\boldsymbol{L}^{2}(K')}^2\right)\bigg),
\end{split}
\end{equation*}
with
$
\displaystyle\tilde{\Omega}_K=\hat{\T}_K \cup\bigcup_{\gamma\in\hat{\mathcal{F}}_K}\Omega_\gamma.
$
\end{theorem}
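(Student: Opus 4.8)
The proof is a collection step: every building block has already been established in \eqref{ass2lb}--\eqref{controleff}, so the plan is simply to insert each local bound into the master inequality \eqref{ass2lb} of \textbf{Assumption 7} and regroup. The first half of the claim, $\Upsilon_K^2\lesssim\tilde{\Upsilon}_K^2$, is immediate from \textbf{Assumption 5}, which compares the two indicator families term by term; it therefore suffices to dominate the right--hand side of \eqref{ass2lb}.

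I would proceed term by term. The two divergence contributions are disposed of at once: \eqref{divyTbound} and \eqref{divwTbound} bound $\|\nabla\cdot\bar\ysf_\T\|_{L^2(K)}^2$ and $\|\nabla\cdot\bar\wsf_\T\|_{L^2(K)}^2$ by $\normv{\boldsymbol{\esf}_{\ysf}}_{\mathbf{V},K}^2$ and $\normv{\boldsymbol{\esf}_{\wsf}}_{\mathbf{V},K}^2$, both of which are summands of the combined local norm $\norm{(\boldsymbol{\esf}_{\ysf},\esf_{\psf},\boldsymbol{\esf}_{\wsf},\esf_{\qsf},\boldsymbol{\esf}_{\usf})}_{K}^2$. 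For the element residuals I substitute \eqref{stelres} and \eqref{adelres}; the point is that the factor $h_K^2$ multiplying $\|\boldsymbol{\mathcal{R}}_{K'}^{\textsf{st}}\|_{\boldsymbol{L}^2(K')}^2$ cancels the $h_{K'}^{-2}$ produced by the bubble--function bound once shape regularity is used to write $h_K\simeq h_{K'}$ for the finitely many $K'\in\hat{\T}_K$. This yields the energy--norm pieces $\normv{\boldsymbol{\esf}_{\ysf}}_{\mathbf{V},K'}^2+\varrho\normp{\esf_{\psf}}_{Q,K'}^2$ from the state residual and $\normv{\boldsymbol{\esf}_{\wsf}}_{\mathbf{V},K'}^2+\varrho\normp{\esf_{\qsf}}_{Q,K'}^2$ from the adjoint residual, together with the higher--order remainders $h_{K'}^2\|\boldsymbol{\esf}_{\usf}\|_{\boldsymbol{L}^2(K')}^2$, $h_{K'}^2\|\boldsymbol{\esf}_{\ysf}\|_{\boldsymbol{L}^2(K')}^2$ and the oscillation terms. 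An identical cancellation, now of $h_K$ against $h_{K'}^{-1}$, applied to the jump bounds \eqref{stefres} and \eqref{adefres} generates the same families of terms summed over the face patches $\Omega_\gamma$, $\gamma\in\hat{\mathcal{F}}_K$.

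The only term deserving attention is the control indicator $\eta_{\usf,K}^2$, which \eqref{controleff} bounds by $\|\boldsymbol{\esf}_{\usf}\|_{\boldsymbol{L}^2(K)}^2+\|\boldsymbol{\esf}_{\wsf}\|_{\boldsymbol{L}^2(K)}^2$. The first summand is once more part of $\norm{(\boldsymbol{\esf}_{\ysf},\esf_{\psf},\boldsymbol{\esf}_{\wsf},\esf_{\qsf},\boldsymbol{\esf}_{\usf})}_{K}^2$, but $\|\boldsymbol{\esf}_{\wsf}\|_{\boldsymbol{L}^2(K)}^2$ cannot be absorbed into the combined norm: the local energy norm $\normv{\cdot}_{\mathbf{V},K}^2$ controls only $\varepsilon\|\nabla\cdot\|_K^2+\kappa\|\cdot\|_K^2$, and when $\kappa=0$ there is no local Poincar\'e inequality converting this into $L^2$ control on $K$. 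This is exactly why $\|\boldsymbol{\esf}_{\wsf}\|_{\boldsymbol{L}^2(K)}^2$ survives as the isolated leading term of the asserted estimate --- and it is the single step I would flag as the genuine obstacle, the rest being the routine cancellation of mesh--size powers under shape regularity. Gathering all the contributions over $\tilde{\Omega}_K=\hat{\T}_K\cup\bigcup_{\gamma\in\hat{\mathcal{F}}_K}\Omega_\gamma$ and folding the energy--norm pieces back into $\norm{(\boldsymbol{\esf}_{\ysf},\esf_{\psf},\boldsymbol{\esf}_{\wsf},\esf_{\qsf},\boldsymbol{\esf}_{\usf})}_{K'}^2$ then produces the stated inequality.
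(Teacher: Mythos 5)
Your proposal is correct and follows exactly the paper's route: the paper's proof is literally the statement that the theorem ``follows upon combining \eqref{ass2lb}--\eqref{controleff}'', and your term-by-term substitution of \eqref{divyTbound}, \eqref{divwTbound}, \eqref{stelres}, \eqref{stefres}, \eqref{adelres}, \eqref{adefres} and \eqref{controleff} into the right-hand side of \eqref{ass2lb}, with the mesh-size cancellations under shape regularity, is precisely that combination spelled out. Your observation that the $\|\boldsymbol{\esf}_{\wsf}\|_{\boldsymbol{L}^{2}(K)}^2$ contribution from \eqref{controleff} cannot be folded into the combined local norm (absent a $\kappa$-independent local bound) correctly explains why it survives as the isolated leading term in the statement.
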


The following corollary follows upon using \eqref{C_Omega} and the fact that $\Omega$ is bounded.

\begin{corollary}[global efficiency]
If $\varrho\ne0$ and \textbf{Assumptions 3}, \textbf{4}, \textbf{5}, \textbf{6} and \textbf{7} hold, then
\begin{equation*}
\sum_{K\in\T}\Upsilon_{K}^{2}\lesssim\tilde{\Upsilon}^{2}\lesssim
\norm{(\boldsymbol{\esf}_{\ysf},\esf_{\psf},\boldsymbol{\esf}_{\wsf},\esf_{\qsf},\boldsymbol{\esf}_{\usf})}_{\Omega}^2
+\sum_{K\in\T}h_K^2\left(
\|\textbf{\textsf{osc}}_{K}^{\textsf{st}}\|_{\boldsymbol{L}^{2}(K)}^2 +
\|\textbf{\textsf{osc}}_{K}^{\textsf{ad}}\|_{\boldsymbol{L}^{2}(K)}^2\right).
\end{equation*}
\end{corollary}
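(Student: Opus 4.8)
The plan is to obtain the global estimate by summing the local efficiency bound of the preceding theorem over all $K\in\T$ and reorganizing the resulting sums, using only the global Poincar\'e--type inequality \eqref{C_Omega} and the boundedness of $\Omega$. Summing $\Upsilon_K^2\lesssim\tilde\Upsilon_K^2\lesssim(\cdots)$ over $K\in\T$ yields $\sum_{K\in\T}\Upsilon_K^2\lesssim\tilde\Upsilon^2\lesssim(\text{sum of the right-hand sides})$, and I would split this right-hand side into three groups: the free term $\sum_{K\in\T}\|\boldsymbol{\esf}_{\wsf}\|_{\boldsymbol{L}^2(K)}^2$, the patch sums of the combined local norm $\norm{(\boldsymbol{\esf}_{\ysf},\esf_{\psf},\boldsymbol{\esf}_{\wsf},\esf_{\qsf},\boldsymbol{\esf}_{\usf})}_{K'}^2$, and the patch sums carrying the weight $h_{K'}^2$.

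For the patch sums $\sum_{K\in\T}\sum_{K'\in\tilde\Omega_K}(\cdots)$ I would interchange the order of summation. Since the partitions belong to a shape regular family, every element $K'$ lies in a uniformly bounded number of patches $\tilde\Omega_K$, so this finite--overlap property converts each double sum into the single sum $\sum_{K'\in\T}(\cdots)$ up to a constant absorbed by $\lesssim$. Applied to the combined--norm term, this gives $\sum_{K'\in\T}\norm{(\cdots)}_{K'}^2=\norm{(\cdots)}_\Omega^2$ directly from the definition \eqref{eq:global_error}. For the weighted terms, the boundedness of $\Omega$ gives $h_{K'}\le\mathrm{diam}(\Omega)$, hence $h_{K'}^2\lesssim1$; therefore $\sum_{K'}h_{K'}^2\|\boldsymbol{\esf}_{\usf}\|_{\boldsymbol{L}^2(K')}^2\lesssim\|\boldsymbol{\esf}_{\usf}\|_{\boldsymbol{L}^2(\Omega)}^2$, which is already one of the summands of $\norm{(\cdots)}_\Omega^2$, while $\sum_{K'}h_{K'}^2\|\boldsymbol{\esf}_{\ysf}\|_{\boldsymbol{L}^2(K')}^2\lesssim\|\boldsymbol{\esf}_{\ysf}\|_{\boldsymbol{L}^2(\Omega)}^2\le\texttt{C}_\Omega^2\norm{\boldsymbol{\esf}_{\ysf}}_{\mathbf{V},\Omega}^2$ after invoking the global part of \eqref{C_Omega}, and this is again dominated by $\norm{(\cdots)}_\Omega^2$. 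The oscillation contributions retain their weight $h_{K'}^2$ and, after the overlap step, assemble into the global oscillation sum $\sum_{K\in\T}h_K^2(\|\textbf{\textsf{osc}}_K^{\textsf{st}}\|_{\boldsymbol{L}^2(K)}^2+\|\textbf{\textsf{osc}}_K^{\textsf{ad}}\|_{\boldsymbol{L}^2(K)}^2)$ appearing on the right-hand side of the corollary.

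The only step that is more than reindexing is the free term $\sum_{K\in\T}\|\boldsymbol{\esf}_{\wsf}\|_{\boldsymbol{L}^2(K)}^2=\|\boldsymbol{\esf}_{\wsf}\|_{\boldsymbol{L}^2(\Omega)}^2$: this $\boldsymbol{L}^2$ norm of the adjoint velocity error is not controlled element by element by the local energy norm, so here I would again use the global inequality in \eqref{C_Omega} to write $\|\boldsymbol{\esf}_{\wsf}\|_{\boldsymbol{L}^2(\Omega)}^2\le\texttt{C}_\Omega^2\norm{\boldsymbol{\esf}_{\wsf}}_{\mathbf{V},\Omega}^2\le\texttt{C}_\Omega^2\norm{(\cdots)}_\Omega^2$. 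Collecting the three groups then produces the claimed bound. The main (though mild) obstacle is precisely this globalization of the $\boldsymbol{L}^2$ velocity terms: because the energy norm carries no zeroth--order contribution when $\kappa=0$, passing from $\boldsymbol{L}^2$ to the energy norm forces the use of the global Poincar\'e constant $\texttt{C}_\Omega$, which is exactly why \eqref{C_Omega} together with the boundedness of $\Omega$ are the ingredients needed and why the estimate cannot be localized any further.
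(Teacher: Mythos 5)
Your proposal is correct and follows essentially the same route as the paper, which proves the corollary in one line by summing the local efficiency bound, invoking \eqref{C_Omega} to convert the $\boldsymbol{L}^2$ velocity error terms into the energy norm, and using the boundedness of $\Omega$ to absorb the factors $h_K^2$. Your explicit remarks on the finite overlap of the patches $\tilde{\Omega}_K$ (from shape regularity) and on why the $\kappa=0$ case forces the global Poincar\'e constant are exactly the implicit ingredients behind the paper's terse argument.
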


\section{A particular example}\label{particular}

Henceforth, we shall consider a particular case of the approximation scheme \eqref{fem_control}. 
We set $\mathbf{V}(\T)=\mathbf{V}\cap\mathbb{P}_{1}(\T)$, $Q(\T)=Q\cap\mathbb{P}_{0}(\T)$, $\mathbf{U}_{ad}(\T)=\mathbf{U}_{ad}\cap\mathbb{P}_{0}(\T)$,
\begin{equation}
\mathcal{S}(\bar{\ysf}_{\T},\bar{\psf}_{\T},\fsf+\bar{\usf}_{\T};\boldsymbol{\xisf})  =
\sum_{K\in\T}\mathcal{S}_K(\bar{\ysf}_{\T},\bar{\psf}_{\T},\fsf+\bar{\usf}_{\T};\boldsymbol{\xisf}),
\end{equation}
\begin{equation}
\mathcal{H}(\bar{\ysf}_{\T},\bar{\psf}_{\T},\fsf+\bar{\usf}_{\T};\phi)  =
\tau_{\gamma}\sum_{\gamma\in \mathcal{F}_{I}}h_{\gamma}\left([\bar{\psf}_{\T}],[\phi]\right)_{L^{2}(\gamma)},
\end{equation}
\begin{equation}
\mathcal{Q}(\bar{\wsf}_{\T},\bar{\qsf}_{\T},\bar{\ysf}_{\T}-\ysf_{\Omega};\zetasf)  =
\sum_{K\in\T}\mathcal{Q}_K(\bar{\wsf}_{\T},\bar{\qsf}_{\T},\bar{\ysf}_{\T}-\ysf_{\Omega};\zetasf),
\end{equation}
and
\begin{equation}
\mathcal{K}(\bar{\wsf}_{\T},\bar{\qsf}_{\T},\bar{\ysf}_{\T}-\ysf_{\Omega};\psi)  =
-\tau_{\gamma}\sum_{\gamma\in \mathcal{F}_{I}}h_{\gamma}\left([\bar{\qsf}_{\T}],[\psi]\right)_{L^{2}(\gamma)},
\end{equation}
where
\[
\mathcal{S}_K(\bar{\ysf}_{\T},\bar{\psf}_{\T},\fsf+\bar{\usf}_{\T};\boldsymbol{\xisf})=\tau_{K}(\left(\csf\cdot\nabla\right)\bar{\ysf}_{\T}+\kappa \bar{\ysf}_{\T}-(\fsf+\bar{\usf}_{\T}),\left(\csf\cdot\nabla\right)\boldsymbol{\xisf})_{L^{2}(K)},
\]
\[
\mathcal{Q}_K(\bar{\wsf}_{\T},\bar{\qsf}_{\T},\bar{\ysf}_{\T}-\ysf_{\Omega};\zetasf)=\tau_{K}(\left(\csf\cdot\nabla\right)\bar{\wsf}_{\T}-\kappa \bar{\wsf}_{\T}+\bar{\ysf}_{\T}-\ysf_{\Omega},\left(\csf\cdot\nabla\right)\zetasf)_{L^{2}(K)}
\]
and $[v]$ denotes the jumps in $v$. The stabilization parameters $\tau_\gamma$ and $\tau_{K}$ are such that $\tau_\gamma>0$ and $0<\tau_{K}\lesssim h_K^2$. Note that these choices correspond to solving the state equations using a particular case of the method given by \cite[equation (3.6)]{MR2454024} and are such that \textbf{Assumption 6} is satisfied.

We note that alternative methods for solving the state equations can be found in \cite{braack2007stabilized} but we restrict our attention to the method described above in order to simplify the presentation.

\subsection{Fully computable a posteriori error estimators}\label{sec:fully}

In this section we obtain a posteriori error estimators that satisfy the assumptions of Section \ref{A_posteriori} and are fully computable if the value of a $\beta$ satisfying \eqref{inf-sup} is known. We first define some quantities that the estimators will be defined in terms of.

For $\varsigma=\textsf{st}$ and $\varsigma=\textsf{ad}$, let the equilibrated fluxes $\boldsymbol{g}_{\gamma,K}^{\varsigma}\in\mathbb{P}_1(\gamma)^d$ be such that
\begin{equation}\label{fluxsum}
\boldsymbol{g}_{\gamma,K}^{\varsigma}+\boldsymbol{g}_{\gamma,K'}^{\varsigma}=0\mbox{, if }\gamma\in\mathcal{F}_{K}\cap\mathcal{F}_{K'}\mbox{, }K,K'\in\T\mbox{, }K\neq K',
\end{equation}
\begin{eqnarray*}
(\fsf+\bar{\usf}_{\T},\boldsymbol{\lambda})_{\boldsymbol{L}^{2}(K)}
-
\varepsilon(\nabla\bar{\ysf}_{\T},\nabla\boldsymbol{\lambda})_{\underset{\approx}{\boldsymbol{L}}^{2}(K)}
-
(\kappa\bar{\ysf}_{\T} + \left(\csf \cdot\nabla\right)\bar{\ysf}_{\T} ,\boldsymbol{\lambda})_{\boldsymbol{L}^{2}(K)}&&
\\
+
(\bar{\psf}_{\T},\nabla\cdot\boldsymbol{\lambda})_{L^{2}(K)}
-
\mathcal{S}_K(\bar{\ysf}_{\T},\bar{\psf}_{\T},\fsf+\bar{\usf}_{\T};\boldsymbol{\lambda})
+
\sum_{\gamma\in\mathcal{F}_K}(\boldsymbol{g}_{\gamma,K}^{\textsf{st}},\boldsymbol{\lambda})_{\boldsymbol{L}^{2}(\gamma)}
&&=0
\end{eqnarray*}
for all $\boldsymbol{\lambda}\in\mathbb{P}_1(K)^d$ and all $K\in\mathcal{P}$,
\begin{eqnarray*}
(\bar{\ysf}_{\T}-\ysf_{\Omega},\boldsymbol{\lambda})_{\boldsymbol{L}^{2}(K)}
-
\varepsilon(\nabla\bar{\wsf}_{\T},\nabla\boldsymbol{\lambda})_{\underset{\approx}{\boldsymbol{L}}^{2}(K)}
-
(\kappa\bar{\wsf}_{\T} - \left(\csf \cdot\nabla\right)\bar{\wsf}_{\T} ,\boldsymbol{\lambda})_{\boldsymbol{L}^{2}(K)}&&
\\
-
(\bar{\qsf}_{\T},\nabla\cdot\boldsymbol{\lambda})_{L^{2}(K)}
-
\mathcal{Q}_K(\bar{\wsf}_{\T},\bar{\qsf}_{\T},\bar{\ysf}_{\T}-\ysf_{\Omega};\boldsymbol{\lambda})
+
\sum_{\gamma\in\mathcal{F}_K}(\boldsymbol{g}_{\gamma,K}^{\textsf{ad}},\boldsymbol{\lambda})_{\boldsymbol{L}^{2}(\gamma)}
&&=0
\end{eqnarray*}
for all $\boldsymbol{\lambda}\in\mathbb{P}_1(K)^d$ and all $K\in\mathcal{P}$, and
\begin{equation}\label{fluxbound}
\sum_{\gamma\in\mathcal{F}_K}h_K\|\boldsymbol{g}_{\gamma,K}^{\varsigma}+\boldsymbol{\mathcal{R}}_{\gamma,K}^{\varsigma}\|_{L^{2}(\gamma)}^{2}
\lesssim
\sum_{K'\in\hat{\T}_K}h_K^{2}\|\boldsymbol{\mathcal{R}}_{K'}^{\varsigma}\|_{\boldsymbol{L}^{2}(K')}^{2}
+\sum_{\gamma\in\hat{\mathcal{F}}_K}h_K\|\llbracket \boldsymbol{\mathcal{R}}_{\gamma}^{\varsigma} \rrbracket\|_{\boldsymbol{L}^{2}(\gamma)}^{2}
\end{equation}
for all $K\in\mathcal{P}$, where
\begin{equation*}
\hat{\T}_K=\{K'\in\T:~ \mathcal{V}_{K}\cap\mathcal{V}_{K'}\neq\emptyset\} \quad \textrm{and} \quad
\displaystyle\hat{\mathcal{F}}_K=\bigcup_{\gamma\in\mathcal{F_K}}\{\gamma'\in\mathcal{F}_I:~ \mathcal{V}_{\gamma}\cap\mathcal{V}_{\gamma'}\neq\emptyset\}
\end{equation*}
with $\mathcal{V}_{K}$ denoting the set containing the vertices of element $K$ and $\mathcal{V}_{\gamma}$ denoting the set containing the vertices of the edge/face $\gamma$. For information that will help with the construction of such $\boldsymbol{g}_{\gamma,K}^{\varsigma}$ we refer the reader to \cite[Chapter 6]{AObook} and \cite{MR3556402,allendes2016adaptive}.

For $\varsigma=\textsf{st}$ and $\varsigma=\textsf{ad}$, we also define $\boldsymbol{\sigma}_K^{\varsigma}\in\mathbb{P}_2(K)^{d\times d}$ to be such that
\begin{equation*}
\left\{
\begin{array}{l}
-\mathbf{div}\,\boldsymbol{\sigma}_K^{\varsigma}  = \boldsymbol{\mathcal{R}}_{K}^{\varsigma}\textrm{ in }K, 
\\
\boldsymbol{\sigma}_K^{\varsigma}\boldsymbol{n}_{\gamma}^{K} = \boldsymbol{g}_{\gamma,K}^{\varsigma}+\boldsymbol{\mathcal{R}}_{\gamma,K}^{\varsigma}\textrm{ on }\gamma,~\forall~\gamma\in\mathcal{F}_{K},
\end{array}
\right.
\end{equation*}
and $\|\boldsymbol{\sigma}_K^{\varsigma}\|_{\boldsymbol{L}^{2}(K)}$ is minimized. We note that the $\boldsymbol{g}_{\gamma,K}^{\varsigma}$ are such that the data in the above problem are compatible in the sense that $\boldsymbol{\sigma}_K^{\varsigma}$ exists. Moreover, for all $K\in\T$,
\begin{equation}\label{sigmaidentity}
(\boldsymbol{\sigma}_K^{\varsigma},\nabla\xisf)_{\underset{\approx}{\boldsymbol{L}}^{2}(K)}
=
(\boldsymbol{\mathcal{R}}_{K}^{\varsigma} ,\xisf)_{\boldsymbol{L}^{2}(K)}
+\sum_{\gamma\in\mathcal{F}_{K}}
(\boldsymbol{g}_{\gamma,K}^{\varsigma}+\boldsymbol{\mathcal{R}}_{\gamma,K}^{\varsigma},\xisf)_{\boldsymbol{L}^{2}(\gamma)}\quad\forall~\xisf\in \mathbf{V}
\end{equation}
and
\begin{equation}\label{sigmabound}
\|\boldsymbol{\sigma}_K^{\varsigma}\|_{\boldsymbol{L}^{2}(K)}^2\lesssim
h_K^{2}\|\boldsymbol{\mathcal{R}}_{K'}^{\varsigma}\|_{\boldsymbol{L}^{2}(K)}^{2}
+\sum_{\gamma\in\mathcal{F}_K}h_K\|\boldsymbol{g}_{\gamma,K}^{\varsigma}+\boldsymbol{\mathcal{R}}_{\gamma,K}^{\varsigma}\|_{\boldsymbol{L}^{2}(\gamma)}^{2}.
\end{equation}
For information on the construction of such $\boldsymbol{\sigma}_K^{\varsigma}$ we refer the reader to \cite{ABR2017,MR3556402}.

Finally, for $\varsigma=\textsf{st}$ and $\varsigma=\textsf{ad}$, we define
\begin{equation}
\Psi_{\varsigma,K}=\frac{1}{\sqrt{\varepsilon}}\|\boldsymbol{\sigma}_K^{\varsigma}\|_{\boldsymbol{L}^{2}(K)}+
\texttt{C}_{K}
\|\boldsymbol{\mathsf{osc}}_{K}^{\varsigma}\|_{\boldsymbol{L}^{2}(K)}.
\end{equation}

We thus have the following result.

\begin{theorem}\label{th:fcyp}
\textbf{Assumption 3} holds with
\begin{equation}\label{etayK}
\eta_{\ysf,K}^2=
3\Psi_{\textup{\textsf{st}},K}^2
+
\textup{\texttt{C}}_{\textup{\textsf{is}}}^2\left(1+2\textup{\texttt{C}}_{\textup{\textsf{ct}}}^2\right)\|\nabla\cdot\bar\ysf_\T\|_{L^2(K)}^2
\end{equation}
and
\begin{equation}\label{etapK}
\eta_{\psf,K}^2=
2\textup{\texttt{C}}_{\textup{\textsf{is}}}^2\left(\left(1+
3\textup{\texttt{C}}_{\textup{\textsf{ct}}}^2\right)\Psi_{\textup{\textsf{st}},K}^2
+
\textup{\texttt{C}}_{\textup{\textsf{is}}}^2\textup{\texttt{C}}_{\textup{\textsf{ct}}}^2\left(1+2\textup{\texttt{C}}_{\textup{\textsf{ct}}}^2\right)\|\nabla\cdot\bar\ysf_\T\|_{L^2(K)}^2
\right).
\end{equation}
Moreover, \textbf{Assumption 1} holds with
\begin{equation}\label{etayp}
\eta_{\ysf}=\left(\sum_{K\in\mathcal{P}}\eta_{\ysf,K}^2\right)^{1/2}
\mbox{ and }
\eta_{\psf}=\left(\sum_{K\in\mathcal{P}}\eta_{\psf,K}^2\right)^{1/2}.
\end{equation}
\end{theorem}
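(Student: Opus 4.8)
The plan is to prove both inequalities of \textbf{Assumption 3}, after which \eqref{etayp} and hence \textbf{Assumption 1} are immediate, since the quantities there are just the square roots of $\sum_K\eta_{\ysf,K}^2$ and $\sum_K\eta_{\psf,K}^2$. Throughout write $\boldsymbol{e}_{\ysf}:=\hat{\ysf}-\bar{\ysf}_{\T}$ and $e_{\psf}:=\hat{\psf}-\bar{\psf}_{\T}$, with $(\hat{\ysf},\hat{\psf})$ the solution to \eqref{eq:y_hat-p_hat}, and set $\Psi^2:=\sum_{K\in\T}\Psi_{\textsf{st},K}^2$. First I would derive a residual identity: subtracting the elementwise integration by parts of $\bia(\bar{\ysf}_{\T},\xisf)-\bib(\xisf,\bar{\psf}_{\T})$ from the first equation of \eqref{eq:y_hat-p_hat} and recognising the residuals and oscillation in \eqref{eq:int_res_st}--\eqref{eq:jump_st}, exactly as in the computation preceding \eqref{stelres}, gives for every $\xisf\in\mathbf{V}$
\begin{equation*}
\bia(\boldsymbol{e}_{\ysf},\xisf)-\bib(\xisf,e_{\psf})=\sum_{K\in\T}(\boldsymbol{\mathcal{R}}_{K}^{\textsf{st}}+\textbf{\textsf{osc}}_{K}^{\textsf{st}},\xisf)_{\boldsymbol{L}^{2}(K)}+\sum_{\gamma\in\mathcal{F}_{I}}(\llbracket \boldsymbol{\mathcal{R}}_{\gamma}^{\textsf{st}} \rrbracket,\xisf)_{\boldsymbol{L}^{2}(\gamma)}.
\end{equation*}
The stabilization term $\mathcal{S}$ plays no role here: it only entered the definition of $\bar{\ysf}_{\T}$, not the defect measured against the continuous problem \eqref{eq:y_hat-p_hat}. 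Summing \eqref{sigmaidentity} over $K$ and invoking the equilibration property \eqref{fluxsum} together with the continuity of $\xisf\in\mathbf{V}$ to cancel the $\boldsymbol{g}_{\gamma,K}^{\textsf{st}}$ contributions on interior faces, the right-hand side collapses, yielding the working identity, which I denote $(\star)$,
\begin{equation*}
\bia(\boldsymbol{e}_{\ysf},\xisf)-\bib(\xisf,e_{\psf})=\sum_{K\in\T}(\boldsymbol{\sigma}_K^{\textsf{st}},\nabla\xisf)_{\underset{\approx}{\boldsymbol{L}}^{2}(K)}+\sum_{K\in\T}(\textbf{\textsf{osc}}_{K}^{\textsf{st}},\xisf)_{\boldsymbol{L}^{2}(K)}\qquad\forall\xisf\in\mathbf{V}.
\end{equation*}

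Next, to accommodate a non-solenoidal $\bar{\ysf}_{\T}$, I would lift off its divergence. Since $\bar{\ysf}_{\T}\in\mathbf{V}$ vanishes on $\partial\Omega$ we have $\nabla\cdot\bar{\ysf}_{\T}\in Q$, so the inf--sup condition \eqref{inf-sup_2} (surjectivity of the divergence, see \cite{MR2050138}) furnishes a $\boldsymbol{z}\in\mathbf{V}$ with $\nabla\cdot\boldsymbol{z}=\nabla\cdot\bar{\ysf}_{\T}$ and $\normv{\boldsymbol{z}}_{\mathbf{V},\Omega}\le\texttt{C}_{\textsf{is}}\|\nabla\cdot\bar{\ysf}_{\T}\|_{L^{2}(\Omega)}$. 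Then $\boldsymbol{e}_{\ysf}^{0}:=\boldsymbol{e}_{\ysf}+\boldsymbol{z}$ is divergence free, because $\nabla\cdot\hat{\ysf}=0$. Testing $(\star)$ with $\xisf=\boldsymbol{e}_{\ysf}^{0}$ makes $\bib(\boldsymbol{e}_{\ysf}^{0},e_{\psf})=0$ vanish, which eliminates the unknown pressure error; using \eqref{eq:a_coercive} to write $\bia(\boldsymbol{e}_{\ysf},\boldsymbol{e}_{\ysf}^{0})=\normv{\boldsymbol{e}_{\ysf}}_{\mathbf{V},\Omega}^{2}+\bia(\boldsymbol{e}_{\ysf},\boldsymbol{z})$ gives $\normv{\boldsymbol{e}_{\ysf}}_{\mathbf{V},\Omega}^{2}=-\bia(\boldsymbol{e}_{\ysf},\boldsymbol{z})+\sum_K(\boldsymbol{\sigma}_K^{\textsf{st}},\nabla\boldsymbol{e}_{\ysf}^{0})_{\underset{\approx}{\boldsymbol{L}}^{2}(K)}+\sum_K(\textbf{\textsf{osc}}_{K}^{\textsf{st}},\boldsymbol{e}_{\ysf}^{0})_{\boldsymbol{L}^{2}(K)}$. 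I bound the first term by continuity \eqref{eq:a_continuous}; for the stress and oscillation terms I use $\|\nabla\boldsymbol{e}_{\ysf}^{0}\|\le\varepsilon^{-1/2}\normv{\boldsymbol{e}_{\ysf}^{0}}_{\mathbf{V},K}$, the orthogonality of $\textbf{\textsf{osc}}_{K}^{\textsf{st}}$ to constants (so that $(\textbf{\textsf{osc}}_{K}^{\textsf{st}},\boldsymbol{e}_{\ysf}^{0})_{K}=(\textbf{\textsf{osc}}_{K}^{\textsf{st}},\boldsymbol{e}_{\ysf}^{0}-\Pi_{K,0}\boldsymbol{e}_{\ysf}^{0})_{K}$) with the local estimate \eqref{C_Omega}, and Cauchy--Schwarz over the mesh to collect the combined bound $\Psi\,\normv{\boldsymbol{e}_{\ysf}^{0}}_{\mathbf{V},\Omega}$. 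Inserting $\normv{\boldsymbol{e}_{\ysf}^{0}}_{\mathbf{V},\Omega}\le\normv{\boldsymbol{e}_{\ysf}}_{\mathbf{V},\Omega}+\normv{\boldsymbol{z}}_{\mathbf{V},\Omega}$ leaves the three cross terms $\Psi\normv{\boldsymbol{e}_{\ysf}}_{\mathbf{V},\Omega}$, $\Psi\normv{\boldsymbol{z}}_{\mathbf{V},\Omega}$ and $\texttt{C}_{\textsf{ct}}\normv{\boldsymbol{e}_{\ysf}}_{\mathbf{V},\Omega}\normv{\boldsymbol{z}}_{\mathbf{V},\Omega}$; a weighted Young's inequality applied termwise (with weights chosen so that the $\normv{\boldsymbol{e}_{\ysf}}_{\mathbf{V},\Omega}^{2}$ contributions total $\tfrac12\normv{\boldsymbol{e}_{\ysf}}_{\mathbf{V},\Omega}^{2}$, which is then absorbed) produces exactly the constants $3$ on $\Psi^{2}$ and $(1+2\texttt{C}_{\textsf{ct}}^{2})\normv{\boldsymbol{z}}_{\mathbf{V},\Omega}^{2}$. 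Bounding $\normv{\boldsymbol{z}}_{\mathbf{V},\Omega}^{2}\le\texttt{C}_{\textsf{is}}^{2}\|\nabla\cdot\bar{\ysf}_{\T}\|_{L^{2}(\Omega)}^{2}$ and localising gives \eqref{etayK}.

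For the pressure I would use \eqref{inf-sup_2} on $e_{\psf}$: rearranging $(\star)$ yields $\bib(\xisf,e_{\psf})=\bia(\boldsymbol{e}_{\ysf},\xisf)-\sum_K(\boldsymbol{\sigma}_K^{\textsf{st}},\nabla\xisf)_{\underset{\approx}{\boldsymbol{L}}^{2}(K)}-\sum_K(\textbf{\textsf{osc}}_{K}^{\textsf{st}},\xisf)_{\boldsymbol{L}^{2}(K)}$, and the same three estimates give $\bib(\xisf,e_{\psf})\le(\texttt{C}_{\textsf{ct}}\normv{\boldsymbol{e}_{\ysf}}_{\mathbf{V},\Omega}+\Psi)\normv{\xisf}_{\mathbf{V},\Omega}$, whence $\normp{e_{\psf}}_{Q,\Omega}\le\texttt{C}_{\textsf{is}}(\texttt{C}_{\textsf{ct}}\normv{\boldsymbol{e}_{\ysf}}_{\mathbf{V},\Omega}+\Psi)$. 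Squaring, using $(a+b)^{2}\le2a^{2}+2b^{2}$, and substituting the velocity bound just obtained delivers \eqref{etapK} after localisation. Finally, \eqref{etayp} and \textbf{Assumption 1} follow at once because $\sum_K\eta_{\ysf,K}^{2}$ and $\sum_K\eta_{\psf,K}^{2}$ are precisely the global bounds on $\normv{\boldsymbol{e}_{\ysf}}_{\mathbf{V},\Omega}^{2}$ and $\normp{e_{\psf}}_{Q,\Omega}^{2}$.

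The main obstacle I anticipate is the non-solenoidal discrete velocity: without the divergence lifting $\boldsymbol{z}$, testing $(\star)$ with $\boldsymbol{e}_{\ysf}$ itself leaves the term $-(e_{\psf},\nabla\cdot\bar{\ysf}_{\T})$, which cannot be controlled without already knowing the pressure error. Making the constants match \eqref{etayK}--\eqref{etapK} exactly then hinges on the careful, weighted splitting of the three cross terms rather than a crude triangle-inequality bound on $\normv{\boldsymbol{e}_{\ysf}^{0}}_{\mathbf{V},\Omega}$, which would inflate the constants and fail to reproduce the stated $3$ and $(1+2\texttt{C}_{\textsf{ct}}^{2})$.
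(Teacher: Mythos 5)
Your argument is correct and it does reproduce the stated constants exactly, but the key step---extracting the velocity bound from the residual identity when $\bar{\ysf}_{\T}$ is not solenoidal---is handled by a genuinely different device than in the paper. The paper introduces $\boldsymbol{E}_{\ysf}\in\mathbf{V}$ as the Riesz representative, in the $\normv{\cdot}_{\mathbf{V},\Omega}$ inner product, of the residual functional $\xisf\mapsto\bia(\hat{\ysf}-\bar{\ysf}_{\T},\xisf)-\bib(\xisf,\hat{\psf}-\bar{\psf}_{\T})$; it then tests with $\hat{\ysf}-\bar{\ysf}_{\T}$ itself, controls the leftover term $\bib(\hat{\ysf}-\bar{\ysf}_{\T},\hat{\psf}-\bar{\psf}_{\T})\le\|\nabla\cdot\bar{\ysf}_{\T}\|_{L^2(\Omega)}\normp{\hat{\psf}-\bar{\psf}_{\T}}_{Q,\Omega}$ by substituting the pressure bound (which itself contains $\normv{\hat{\ysf}-\bar{\ysf}_{\T}}_{\mathbf{V},\Omega}$) back into the velocity estimate, and untangles the resulting self-referential inequality with Young's inequality; only at the very end is $\normv{\boldsymbol{E}_{\ysf}}_{\mathbf{V},\Omega}^2\le\sum_{K}\Psi_{\textsf{st},K}^2$ established via \eqref{sigmaidentity} and \eqref{Poincare}. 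You instead build a divergence lifting $\boldsymbol{z}$ with $\nabla\cdot\boldsymbol{z}=\nabla\cdot\bar{\ysf}_{\T}$ and $\normv{\boldsymbol{z}}_{\mathbf{V},\Omega}\le\texttt{C}_{\textsf{is}}\|\nabla\cdot\bar{\ysf}_{\T}\|_{L^2(\Omega)}$ and test with the solenoidal corrected error, so the pressure never enters the velocity estimate at all. Both routes rest on the same equilibrated-flux machinery, and your choice of weights in Young's inequality (namely $\tfrac12,\tfrac12,1$ on the three cross terms) does yield exactly the factors $3$ and $1+2\texttt{C}_{\textsf{ct}}^2$, after which your pressure estimate coincides with the paper's. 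What your decoupling buys is a conceptually cleaner velocity bound with no bootstrap through the pressure; what it costs is an appeal to the bounded right inverse of the divergence operator with constant $\texttt{C}_{\textsf{is}}$, which is equivalent to, but one abstraction step beyond, the inf--sup inequality \eqref{inf-sup_2} actually invoked in the paper.
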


\begin{proof}
Let $\boldsymbol{E}_{\ysf}\in\mathbf{V}$ be the solution to
\begin{equation}\label{eq:Ey}
\varepsilon(\nabla \boldsymbol{E}_{\ysf},\nabla \xisf)_{\underset{\approx}{\boldsymbol{L}}^{2}(\Omega)} + 
\kappa(\boldsymbol{E}_{\ysf},\xisf)_{\boldsymbol{L}^{2}(\Omega)}
=
\bia(\hat{\ysf}-\bar\ysf_\T,\xisf)-\bib(\xisf,\hat{\psf}-\bar\psf_\T)
\quad\forall~\xisf\in \mathbf{V}.
\end{equation}

Letting $\phi=\hat{\psf}-\psf_\T$ in \eqref{inf-sup_2} yields that
\[
\norm{\hat{\psf}-\bar\psf_\T}_{Q,\Omega} 
\leq \texttt{C}_{\textsf{is}}\sup_{\xisf\in \mathbf{V} \setminus\{\boldsymbol{0}\}}
\frac{\bib(\xisf,\hat{\psf}-\bar\psf_\T)}{\norm{\xisf}_{\mathbf{V},\Omega}}.
\]
To control the right--hand side of the previous estimate we use \eqref{eq:Ey} and obtain that
\begin{eqnarray*}
\bib(\xisf,\hat{\psf}-\bar\psf_\T)
&=&
\bia(\hat{\ysf}-\bar\ysf_\T,\xisf)
-\varepsilon(\nabla \boldsymbol{E}_{\ysf},\nabla \xisf)_{\underset{\approx}{\boldsymbol{L}}^{2}(\Omega)}
-\kappa(\boldsymbol{E}_{\ysf},\xisf)_{\boldsymbol{L}^{2}(\Omega)}
\\
&\le&
\texttt{C}_{\textsf{ct}}\norm{\hat{\ysf}-\bar\ysf_\T}_{\mathbf{V},\Omega}\norm{\xisf}_{\mathbf{V},\Omega}
+
\norm{\boldsymbol{E}_{\ysf}}_{\mathbf{V},\Omega}\norm{\xisf}_{\mathbf{V},\Omega},
\end{eqnarray*}
upon using \eqref{eq:a_continuous}. Hence,
\begin{equation}\label{eq:pbound}
\norm{\hat{\psf}-\bar\psf_\T}_{Q,\Omega}
\leq\texttt{C}_{\textsf{is}}\left(\norm{\boldsymbol{E}_{\ysf}}_{\mathbf{V},\Omega}+\texttt{C}_{\textsf{ct}}\norm{\hat{\ysf}-\ysf_\T}_{\mathbf{V},\Omega}\right).
\end{equation}

We now estimate $\norm{\hat{\ysf}-\bar{\ysf}_\T}_{\mathbf{V},\Omega}$. Since $\hat{\psf}-\bar\psf_\T\in Q$, by using the second equation of \eqref{eq:y_hat-p_hat} we have that
\begin{equation*}
\bib(\hat{\ysf}-\bar\ysf_\T,\hat{\psf}-\bar\psf_\T)=-\bib(\bar\ysf_\T,\hat{\psf}-\bar\psf_\T)\le\|\nabla\cdot\bar\ysf_\T\|_{L^2(\Omega)}\norm{\hat{\psf}-\bar\psf_\T}_{Q,\Omega}.
\end{equation*}
Thus, by using the previous estimate and letting $\xisf=\hat{\ysf}-\bar\ysf_\T$ in \eqref{eq:Ey}, we arrive at
\begin{eqnarray*}
\norm{\hat{\ysf}-\bar\ysf_\T}_{\mathbf{V},\Omega}^2
&=&
\varepsilon(\nabla \boldsymbol{E}_{\ysf},\nabla (\hat{\ysf}-\bar\ysf_\T))_{\underset{\approx}{\boldsymbol{L}}^{2}(\Omega)} + 
\kappa(\boldsymbol{E}_{\ysf},\hat{\ysf}-\bar\ysf_\T)_{\boldsymbol{L}^{2}(\Omega)}
+
\bib(\hat{\ysf}-\bar\ysf_\T,\hat{\psf}-\bar\psf_\T)
\\
&\le&
\norm{\boldsymbol{E}_{\ysf}}_{\mathbf{V},\Omega}\norm{\hat{\ysf}-\bar\ysf_\T}_{\mathbf{V},\Omega}
+
\|\nabla\cdot\bar\ysf_\T\|_{L^2(\Omega)}\norm{\hat{\psf}-\bar\psf_\T}_{Q,\Omega}.
\end{eqnarray*}
This, in view of \eqref{eq:pbound}, then yields that
\begin{eqnarray*}
\norm{\hat{\ysf}-\bar\ysf_\T}_{\mathbf{V},\Omega}^2
&\le&
\texttt{C}_{\textsf{is}}\|\nabla\cdot\bar\ysf_\T\|_{L^2(\Omega)}\norm{\boldsymbol{E}_{\ysf}}_{\mathbf{V},\Omega}
\\
&&+
\left(\norm{\boldsymbol{E}_{\ysf}}_{\mathbf{V},\Omega}
+
\texttt{C}_{\textsf{is}}\texttt{C}_{\textsf{ct}}\|\nabla\cdot\bar\ysf_\T\|_{L^2(\Omega)}\right)
\norm{\hat{\ysf}-\bar\ysf_\T}_{\mathbf{V},\Omega}
\\
&\le&
\tfrac{\texttt{C}_{\textsf{is}}^2}{2}\|\nabla\cdot\bar\ysf_\T\|_{L^2(\Omega)}^2+\tfrac{1}{2}\norm{\boldsymbol{E}_{\ysf}}_{\mathbf{V},\Omega}^2
\\
&&+
\tfrac{1}{2}\left(
\norm{\boldsymbol{E}_{\ysf}}_{\mathbf{V},\Omega}
+
\texttt{C}_{\textsf{is}}\texttt{C}_{\textsf{ct}}\|\nabla\cdot\bar\ysf_\T\|_{L^2(\Omega)}\right)^2
+
\tfrac{1}{2}\norm{\hat{\ysf}-\bar\ysf_\T}_{\mathbf{V},\Omega}^2
\end{eqnarray*}
from which it follows that
\[
\norm{\hat{\ysf}-\bar\ysf_\T}_{\mathbf{V},\Omega}^2
\le
\texttt{C}_{\textsf{is}}^2\|\nabla\cdot\bar\ysf_\T\|_{L^2(\Omega)}^2+\norm{\boldsymbol{E}_{\ysf}}_{\mathbf{V},\Omega}^2
+
\left(
\norm{\boldsymbol{E}_{\ysf}}_{\mathbf{V},\Omega}
+
\texttt{C}_{\textsf{is}}\texttt{C}_{\textsf{ct}}\|\nabla\cdot\bar\ysf_\T\|_{L^2(\Omega)}\right)^2.
\]
Hence, upon observing that
\[
\left(
\norm{\boldsymbol{E}_{\ysf}}_{\mathbf{V},\Omega}
+
\texttt{C}_{\textsf{is}}\texttt{C}_{\textsf{ct}}\|\nabla\cdot\bar\ysf_\T\|_{L^2(\Omega)}\right)^2\le
2\norm{\boldsymbol{E}_{\ysf}}_{\mathbf{V},\Omega}^2
+
2\texttt{C}_{\textsf{is}}^2\texttt{C}_{\textsf{ct}}^2\|\nabla\cdot\bar\ysf_\T\|_{L^2(\Omega)}^2,
\]
we can arrive at
\begin{equation}\label{ymytbound}
\norm{\hat{\ysf}-\bar\ysf_\T}_{\mathbf{V},\Omega}^2
\le
3\norm{\boldsymbol{E}_{\ysf}}_{\mathbf{V},\Omega}^2
+
\textup{\texttt{C}}_{\textsf{is}}^2\left(1+2\textup{\texttt{C}}_{\textsf{ct}}^2\right)\|\nabla\cdot\bar\ysf_\T\|_{L^2(\Omega)}^2.
\end{equation}

Furthermore, \eqref{eq:pbound} allows us to conclude that
\[
\norm{\hat{\psf}-\bar\psf_\T}_{Q,\Omega}^2
\leq 2\texttt{C}_{\textsf{is}}^2\left(\norm{\boldsymbol{E}_{\ysf}}_{\mathbf{V},\Omega}^2+\texttt{C}_{\textsf{ct}}^2\norm{\hat{\ysf}-\bar\ysf_\T}_{\mathbf{V},\Omega}^2\right).
\]
Applying \eqref{ymytbound} then yields that
\begin{equation}\label{pmptbound}
\norm{\hat{\psf}-\bar\psf_\T}_{Q,\Omega}^2
\leq 2\textup{\texttt{C}}_{\textsf{is}}^2\left(\left(1+
3\textup{\texttt{C}}_{\textsf{ct}}^2\right)\norm{\boldsymbol{E}_{\ysf}}_{\mathbf{V},\Omega}^2
+
\textup{\texttt{C}}_{\textsf{is}}^2\textup{\texttt{C}}_{\textsf{ct}}^2\left(1+2\textup{\texttt{C}}_{\textsf{ct}}^2\right)\|\nabla\cdot\bar\ysf_\T\|_{L^2(\Omega)}^2
\right).
\end{equation}

Now, letting $\xisf=\boldsymbol{E}_{\ysf}$ in \eqref{eq:Ey} yields that
\begin{eqnarray*}
\norm{\boldsymbol{E}_{\ysf}}_{\mathbf{V},\Omega}^2
&=&
\bia(\hat{\ysf}-\bar\ysf_\T,\boldsymbol{E}_{\ysf})-\bib(\boldsymbol{E}_{\ysf},\hat{\psf}-\bar\psf_\T)
\\
&=&\sum_{K\in\T}\left(
(\boldsymbol{\mathcal{R}}_{K}^{\textsf{st}} ,\boldsymbol{E}_{\ysf})_{\boldsymbol{L}^{2}(K)}
+\sum_{\gamma\in\mathcal{F}_{K}}
(\boldsymbol{g}_{\gamma,K}^{\textsf{st}}+\boldsymbol{\mathcal{R}}_{\gamma,K}^{\textsf{st}},\boldsymbol{E}_{\ysf})_{\boldsymbol{L}^{2}(\gamma)}+(\textbf{\textsf{osc}}_{K}^{\textsf{st}},\boldsymbol{E}_{\ysf})_{\boldsymbol{L}^{2}(K)}
\right)
\end{eqnarray*}
by \eqref{eq:y_hat-p_hat}, integration by parts, \eqref{eq:int_res_st}, \eqref{eq:osc_st}, \eqref{eq:jump_st} and \eqref{fluxsum}. Applying \eqref{sigmaidentity} and \eqref{projectionL2K} then yields that
\begin{eqnarray*}
\norm{\boldsymbol{E}_{\ysf}}_{\mathbf{V},\Omega}^2
&=&\sum_{K\in\T}\left(
(\boldsymbol{\sigma}_K^{\textsf{st}},\nabla\boldsymbol{E}_{\ysf})_{\underset{\approx}{\boldsymbol{L}}^{2}(K)}
+
(\textbf{\textsf{osc}}_{K}^{\textsf{st}},\boldsymbol{E}_{\ysf}-\Pi_{K,0}(\boldsymbol{E}_{\ysf}))_{\boldsymbol{L}^{2}(K)}
\right)
\\
&\le&\left(\sum_{K\in\T}\Psi_{\textsf{st},K}^2\right)^{1/2}
\norm{\boldsymbol{E}_{\ysf}}_{\mathbf{V},\Omega}
\end{eqnarray*}
by the Cauchy--Schwarz inequality and \eqref{Poincare}. Consequently,
\begin{equation}\label{Eybound}
\norm{\boldsymbol{E}_{\ysf}}_{\mathbf{V},\Omega}^2
\le
\sum_{K\in\T}\Psi_{\textsf{st},K}^2.
\end{equation}
The theorem then follows upon combining \eqref{ymytbound}, \eqref{pmptbound} and \eqref{Eybound}.
\end{proof}

We note that the above theorem is an improvement and adaptation to the case considered in this section of the results from \cite{MR3556402}. The below theorem can be proved similarly to how the above theorem was proved.

\begin{theorem}\label{th:fcwq}
\textbf{Assumption 4} holds with
\begin{equation}\label{etawK}
\eta_{\wsf,K}^2=
3\Psi_{\textup{\textsf{ad}},K}^2
+
\textup{\texttt{C}}_{\textup{\textsf{is}}}^2\left(1+2\textup{\texttt{C}}_{\textup{\textsf{ct}}}^2\right)\|\nabla\cdot\bar\wsf_\T\|_{L^2(K)}^2
\end{equation}
and
\begin{equation}\label{etaqK}
\eta_{\qsf,K}^2=
2\textup{\texttt{C}}_{\textup{\textsf{is}}}^2\left(\left(1+
3\textup{\texttt{C}}_{\textup{\textsf{ct}}}^2\right)\Psi_{\textup{\textsf{ad}},K}^2
+
\textup{\texttt{C}}_{\textup{\textsf{is}}}^2\textup{\texttt{C}}_{\textup{\textsf{ct}}}^2\left(1+2\textup{\texttt{C}}_{\textup{\textsf{ct}}}^2\right)\|\nabla\cdot\bar\wsf_\T\|_{L^2(K)}^2
\right).
\end{equation}
Moreover, \textbf{Assumption 2} holds with
\begin{equation}\label{etawq}
\eta_{\wsf}=\left(\sum_{K\in\mathcal{P}}\eta_{\wsf,K}^2\right)^{1/2}
\mbox{ and }
\eta_{\qsf}=\left(\sum_{K\in\mathcal{P}}\eta_{\qsf,K}^2\right)^{1/2}.
\end{equation}
\end{theorem}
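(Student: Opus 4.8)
The plan is to mirror, line for line, the proof of Theorem~\ref{th:fcyp}, replacing every state quantity by its adjoint counterpart and keeping careful track of the two sign changes that distinguish the adjoint system \eqref{eq:w_hat-q_hat} from the state system \eqref{eq:y_hat-p_hat}: the convective term now enters through $\bic$ (with the sign of $\csf$ reversed) and the pressure coupling appears as $+\bib(\zetasf,\hat{\qsf})$ rather than $-\bib(\xisf,\hat{\psf})$. First I would introduce the auxiliary field $\boldsymbol{E}_{\wsf}\in\mathbf{V}$, the exact analogue of \eqref{eq:Ey}, defined by
\[
\varepsilon(\nabla\boldsymbol{E}_{\wsf},\nabla\zetasf)_{\underset{\approx}{\boldsymbol{L}}^{2}(\Omega)}+\kappa(\boldsymbol{E}_{\wsf},\zetasf)_{\boldsymbol{L}^{2}(\Omega)}=\bic(\hat{\wsf}-\bar{\wsf}_{\T},\zetasf)+\bib(\zetasf,\hat{\qsf}-\bar{\qsf}_{\T})\quad\forall\zetasf\in\mathbf{V}.
\]

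Next I would bound $\normp{\hat{\qsf}-\bar{\qsf}_{\T}}_{Q,\Omega}$ by taking $\phi=\hat{\qsf}-\bar{\qsf}_{\T}$ in the inf--sup condition \eqref{inf-sup_2}, using the definition of $\boldsymbol{E}_{\wsf}$ to rewrite $\bib(\zetasf,\hat{\qsf}-\bar{\qsf}_{\T})$, and invoking the continuity \eqref{eq:a_continuous} of $\bic$; this reproduces the adjoint analogue of \eqref{eq:pbound}, namely $\normp{\hat{\qsf}-\bar{\qsf}_{\T}}_{Q,\Omega}\le\texttt{C}_{\textsf{is}}(\normv{\boldsymbol{E}_{\wsf}}_{\mathbf{V},\Omega}+\texttt{C}_{\textsf{ct}}\normv{\hat{\wsf}-\bar{\wsf}_{\T}}_{\mathbf{V},\Omega})$. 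To control $\normv{\hat{\wsf}-\bar{\wsf}_{\T}}_{\mathbf{V},\Omega}$ I would set $\zetasf=\hat{\wsf}-\bar{\wsf}_{\T}$ in the defining relation for $\boldsymbol{E}_{\wsf}$, use the coercivity \eqref{eq:a_coercive} of $\bic$, and exploit the second equation of \eqref{eq:w_hat-q_hat} together with $\hat{\qsf}-\bar{\qsf}_{\T}\in Q$ to replace $\bib(\hat{\wsf}-\bar{\wsf}_{\T},\hat{\qsf}-\bar{\qsf}_{\T})$ by $-\bib(\bar{\wsf}_{\T},\hat{\qsf}-\bar{\qsf}_{\T})$, which Cauchy--Schwarz bounds by $\|\nabla\cdot\bar{\wsf}_{\T}\|_{L^{2}(\Omega)}\normp{\hat{\qsf}-\bar{\qsf}_{\T}}_{Q,\Omega}$. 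The same Young's inequality manipulations as those leading to \eqref{ymytbound} and \eqref{pmptbound} then furnish \eqref{etawK} and \eqref{etaqK}, provided $\normv{\boldsymbol{E}_{\wsf}}_{\mathbf{V},\Omega}$ is controlled by $\Psi_{\textsf{ad},K}$.

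The remaining and most technical step is the adjoint analogue of \eqref{Eybound}, i.e. $\normv{\boldsymbol{E}_{\wsf}}_{\mathbf{V},\Omega}^{2}\le\sum_{K\in\T}\Psi_{\textsf{ad},K}^{2}$. Here I would take $\zetasf=\boldsymbol{E}_{\wsf}$ in the definition of $\boldsymbol{E}_{\wsf}$, use the first equation of \eqref{eq:w_hat-q_hat} to eliminate $(\hat{\wsf},\hat{\qsf})$, and integrate by parts element by element, so that the interior terms assemble into $\boldsymbol{\mathcal{R}}_{K}^{\textsf{ad}}$, the face terms into $\boldsymbol{g}_{\gamma,K}^{\textsf{ad}}+\boldsymbol{\mathcal{R}}_{\gamma,K}^{\textsf{ad}}$ (via the flux balance \eqref{fluxsum}), and the data perturbation into $\textbf{\textsf{osc}}_{K}^{\textsf{ad}}$. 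Invoking the equilibration identity \eqref{sigmaidentity} with $\varsigma=\textsf{ad}$ converts the residual contributions into $(\boldsymbol{\sigma}_{K}^{\textsf{ad}},\nabla\boldsymbol{E}_{\wsf})_{\underset{\approx}{\boldsymbol{L}}^{2}(K)}$; the $\boldsymbol{L}^{2}(K)$--orthogonality of $\textbf{\textsf{osc}}_{K}^{\textsf{ad}}$ to $\mathbb{P}_{0}(K)^{d}$ allows me to subtract $\Pi_{K,0}(\boldsymbol{E}_{\wsf})$ and then bound the oscillation term with the Poincar\'e inequality \eqref{Poincare} through \eqref{C_Omega}--\eqref{CK}. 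A final Cauchy--Schwarz inequality over the elements, together with the definition of $\Psi_{\textsf{ad},K}$, closes the estimate, and combining it with the two preceding bounds establishes \eqref{etawK}--\eqref{etawq}.

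The part I expect to require the most care is purely the sign bookkeeping rather than any new idea: the reversed convection sign in $\bic$ and the opposite sign of the pressure coupling must be threaded consistently through the definition of $\boldsymbol{E}_{\wsf}$, the elementwise integration by parts, and in particular the jump term $\boldsymbol{\mathcal{R}}_{\gamma,K}^{\textsf{ad}}$, which carries $-\bar{\qsf}_{\T|K}\boldsymbol{n}_{\gamma}^{K}$ in place of $+\bar{\psf}_{\T|K}\boldsymbol{n}_{\gamma}^{K}$. Once one verifies that these signs cancel under the Cauchy--Schwarz estimates exactly as in the state case, every intermediate inequality has the identical algebraic form and the constants in \eqref{etawK}--\eqref{etaqK} coincide verbatim with those in \eqref{etayK}--\eqref{etapK}.
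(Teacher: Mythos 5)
Your proposal is correct and is exactly the argument the paper intends: the paper omits the proof of this theorem, stating only that it "can be proved similarly to how the above theorem was proved," and your line-by-line transcription of the proof of Theorem~\ref{th:fcyp} — introducing $\boldsymbol{E}_{\wsf}$ as the Ritz lift of the adjoint residual, running the inf--sup and coercivity estimates with $\bic$ in place of $\bia$, and closing with the equilibrated-flux identity \eqref{sigmaidentity} for $\varsigma=\textsf{ad}$ — is that proof, with the sign bookkeeping handled correctly (the sign flips are harmless because they disappear under the supremum in \eqref{inf-sup_2} and under Cauchy--Schwarz).
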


We note that, if the value of a $\beta$ satisfying \eqref{inf-sup} is known, then \textbf{Assumption 5} holds with $\tilde{\eta}_{\ysf,K}=\eta_{\ysf,K}$, $\tilde{\eta}_{\psf,K}=\eta_{\psf,K}$, $\tilde{\eta}_{\wsf,K}=\eta_{\wsf,K}$ and $\tilde{\eta}_{\qsf,K}=\eta_{\qsf,K}$. Furthermore, by \eqref{fluxbound} and \eqref{sigmabound} we have that
\begin{align}
\eta_{\ysf,K}^2+\eta_{\psf,K}^2\lesssim &
\|\nabla\cdot\bar\ysf_\T\|_{L^{2}(K)}^{2}
+
\sum_{\gamma\in\hat{\mathcal{F}}_K}h_K\|\llbracket \boldsymbol{\mathcal{R}}_{\gamma}^{\textsf{st}} \rrbracket\|_{\boldsymbol{L}^{2}(\gamma)}^{2}\nonumber
\\
&
+
\sum_{K'\in\hat{\T}_K}h_K^{2}\left(\|\boldsymbol{\mathcal{R}}_{K'}^{\textsf{st}}\|_{\boldsymbol{L}^{2}(K')}^{2}+\|\textbf{\textsf{osc}}_{K'}^{\textsf{st}}\|_{\boldsymbol{L}^{2}(K')}^{2}\right)\label{ypeff}
\end{align}
and
\begin{align}
\eta_{\wsf,K}^2+\eta_{\qsf,K}^2\lesssim &
\|\nabla\cdot\bar\wsf_\T\|_{L^{2}(K)}^{2}
+
\sum_{\gamma\in\hat{\mathcal{F}}_K}h_K\|\llbracket \boldsymbol{\mathcal{R}}_{\gamma}^{\textsf{ad}} \rrbracket\|_{\boldsymbol{L}^{2}(\gamma)}^{2}\nonumber
\\
&
+
\sum_{K'\in\hat{\T}_K}h_K^{2}\left(\|\boldsymbol{\mathcal{R}}_{K'}^{\textsf{ad}}\|_{\boldsymbol{L}^{2}(K')}^{2}+\|\textbf{\textsf{osc}}_{K'}^{\textsf{ad}}\|_{\boldsymbol{L}^{2}(K')}^{2}\right)\label{wqeff}
\end{align}
from which it follows that \textbf{Assumption 7} is also satisfied. We note that it also follows that
\begin{align}
\normv{\hat{\ysf}-\bar{\ysf}_{\T}}_{\mathbf{V},\Omega}^2+\normp{\hat{\psf}-\bar{\psf}_{\T}}_{Q,\Omega}^2
\lesssim &
\sum_{K\in\T}\bigg{(}\|\nabla\cdot\bar\ysf_\T\|_{L^{2}(K)}^{2}
+\sum_{\gamma\in\mathcal{F}_K}h_K\|\llbracket \boldsymbol{\mathcal{R}}_{\gamma}^{\textsf{st}} \rrbracket\|_{\boldsymbol{L}^{2}(\gamma)}^{2}\nonumber
\\
&+
h_K^{2}\left(\|\boldsymbol{\mathcal{R}}_{K}^{\textsf{st}}\|_{\boldsymbol{L}^{2}(K)}^{2}+\|\textbf{\textsf{osc}}_{K}^{\textsf{st}}\|_{\boldsymbol{L}^{2}(K)}^{2}\right)\bigg{)}\label{ypglobaleff}
\end{align}
and
\begin{align}
\normv{\hat{\wsf}-\bar{\wsf}_{\T}}_{\mathbf{V},\Omega}^2+\normp{\hat{\qsf}-\bar{\qsf}_{\T}}_{Q,\Omega}^2
\lesssim &
\sum_{K\in\T}\bigg{(}\|\nabla\cdot\bar\wsf_\T\|_{L^{2}(K)}^{2}
+\sum_{\gamma\in\mathcal{F}_K}h_K\|\llbracket \boldsymbol{\mathcal{R}}_{\gamma}^{\textsf{ad}} \rrbracket\|_{\boldsymbol{L}^{2}(\gamma)}^{2}\nonumber
\\
&+
h_K^{2}\left(\|\boldsymbol{\mathcal{R}}_{K}^{\textsf{ad}}\|_{\boldsymbol{L}^{2}(K)}^{2}+\|\textbf{\textsf{osc}}_{K}^{\textsf{ad}}\|_{\boldsymbol{L}^{2}(K)}^{2}\right)\bigg{)}\label{wqglobaleff}
\end{align}

\subsection{Residual--based a posteriori error estimators}\label{sec:alternative}

From \eqref{ypglobaleff} and \eqref{wqglobaleff} the following result follows.

\begin{theorem}\label{th:rbypwq}
Let
\begin{align}
\tilde{\eta}_{\ysf,K}^2=\tilde{\eta}_{\psf,K}^2=&
\|\nabla\cdot\bar\ysf_\T\|_{L^{2}(K)}^{2}
+\sum_{\gamma\in\mathcal{F}_K}h_K\|\llbracket \boldsymbol{\mathcal{R}}_{\gamma}^{\textup{\textsf{st}}} \rrbracket\|_{\boldsymbol{L}^{2}(\gamma)}^{2}\nonumber
\\
&+
h_K^{2}\left(\|\boldsymbol{\mathcal{R}}_{K}^{\textup{\textsf{st}}}\|_{\boldsymbol{L}^{2}(K)}^{2}+\|\textbf{\textsf{osc}}_{K}^{\textup{\textsf{st}}}\|_{\boldsymbol{L}^{2}(K)}^{2}\right),
\end{align}
\begin{align}
\tilde{\eta}_{\wsf,K}^2=\tilde{\eta}_{\qsf,K}^2=&
\|\nabla\cdot\bar\wsf_\T\|_{L^{2}(K)}^{2}
+\sum_{\gamma\in\mathcal{F}_K}h_K\|\llbracket \boldsymbol{\mathcal{R}}_{\gamma}^{\textup{\textsf{ad}}} \rrbracket\|_{\boldsymbol{L}^{2}(\gamma)}^{2}\nonumber
\\
&+
h_K^{2}\left(\|\boldsymbol{\mathcal{R}}_{K}^{\textup{\textsf{ad}}}\|_{\boldsymbol{L}^{2}(K)}^{2}+\|\textbf{\textsf{osc}}_{K}^{\textup{\textsf{ad}}}\|_{\boldsymbol{L}^{2}(K)}^{2}\right),
\end{align}
$\eta_{\ysf,K}=C\tilde{\eta}_{\ysf,K}$, $\eta_{\psf,K}=C\tilde{\eta}_{\psf,K}$, $\eta_{\wsf,K}=C\tilde{\eta}_{\wsf,K}$, $\eta_{\qsf,K}=C\tilde{\eta}_{\qsf,K}$,
\begin{equation*}
\eta_{\ysf}=\eta_{\psf}=\sum_{K\in\T}\eta_{\ysf,K}^2=\sum_{K\in\T}\eta_{\psf,K}^2,
\quad
\eta_{\wsf}=\eta_{\qsf}=\sum_{K\in\T}\eta_{\wsf,K}^2=\sum_{K\in\T}\eta_{\qsf,K}^2,
\end{equation*}
and $\hat{\T}_K=\hat{\mathcal{F}}_K=\{K\}$, where $C$ is a positive constant that is independent of the size of the elements in the mesh. Then \textbf{Assumptions 1}, \textbf{2}, \textbf{3}, \textbf{4}, \textbf{5} and \textbf{7} hold.
\end{theorem}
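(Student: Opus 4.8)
The plan is to assemble the statement from the efficiency--type bounds \eqref{ypglobaleff} and \eqref{wqglobaleff}, which have already been established for the particular scheme of this section, together with the elementary observation that the residual indicators are computable without any reference to the inf--sup constant. First I would verify \textbf{Assumptions 3} and \textbf{4}. The key point is that, by the very definitions of $\tilde{\eta}_{\ysf,K}$ and $\tilde{\eta}_{\wsf,K}$ in the statement, the right--hand sides of \eqref{ypglobaleff} and \eqref{wqglobaleff} are exactly $\sum_{K\in\T}\tilde{\eta}_{\ysf,K}^2$ and $\sum_{K\in\T}\tilde{\eta}_{\wsf,K}^2$. Taking $C$ to be the larger of the square roots of the hidden constants in these two estimates (so that a single $C$ serves all four indicators), I obtain
\begin{equation*}
\normv{\hat{\ysf}-\bar{\ysf}_{\T}}_{\mathbf{V},\Omega}^2
\leq
\normv{\hat{\ysf}-\bar{\ysf}_{\T}}_{\mathbf{V},\Omega}^2 + \normp{\hat{\psf}-\bar{\psf}_{\T}}_{Q,\Omega}^2
\leq
C^2\sum_{K\in\T}\tilde{\eta}_{\ysf,K}^2
=
\sum_{K\in\T}\eta_{\ysf,K}^2 ,
\end{equation*}
and the same inequality with $\ysf$ replaced by $\psf$; the adjoint pair is treated identically using \eqref{wqglobaleff}. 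Because $\tilde{\eta}_{\ysf,K}=\tilde{\eta}_{\psf,K}$ and $\tilde{\eta}_{\wsf,K}=\tilde{\eta}_{\qsf,K}$, the same $C$ works for both members of each pair, which is precisely what permits a common constant in the definitions. This gives \textbf{Assumptions 3} and \textbf{4}.

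Next, \textbf{Assumptions 1} and \textbf{2} follow by aggregation, taking $\eta_{\ysf}=(\sum_{K\in\T}\eta_{\ysf,K}^2)^{1/2}$ and analogously for the remaining quantities, exactly as in \eqref{etayp} and \eqref{etawq}. \textbf{Assumption 5} is then immediate: the equality $\eta_{\ysf,K}=C\tilde{\eta}_{\ysf,K}$ (and its analogues) yields $\eta_{\ysf,K}\lesssim\tilde{\eta}_{\ysf,K}$, while $\tilde{\eta}_{\ysf,K},\dots,\tilde{\eta}_{\qsf,K}$ are built only from element residuals, face jumps, the divergence of the discrete velocities, and oscillation terms, all of which are fully computable once $(\bar{\ysf}_{\T},\bar{\psf}_{\T},\bar{\wsf}_{\T},\bar{\qsf}_{\T},\bar{\usf}_{\T})$ is in hand; in particular, in contrast to the estimators of Section \ref{sec:fully}, no lower bound for the inf--sup constant is required.

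Finally, for \textbf{Assumption 7} I would inspect \eqref{eq:indicator2}. Using $\tilde{\eta}_{\ysf,K}=\tilde{\eta}_{\psf,K}$, $\tilde{\eta}_{\wsf,K}=\tilde{\eta}_{\qsf,K}$ and the convention $\tilde{\eta}_{\usf,K}=\eta_{\usf,K}$ of \eqref{eq:indicator_control}, one gets $\tilde{\Upsilon}_K^2=2\tilde{\eta}_{\ysf,K}^2+2\tilde{\eta}_{\wsf,K}^2+\eta_{\usf,K}^2$, and each summand reproduces, up to the fixed factor $2$, precisely the divergence, element--residual, jump and oscillation contributions on the right--hand side of \eqref{ass2lb} once $\hat{\T}_K=\{K\}$ and $\hat{\mathcal{F}}_K$ is taken to be the interior faces of $K$, with the $\eta_{\usf,K}^2$ term matching verbatim. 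The whole argument is thus bookkeeping, and I do not anticipate a genuine obstacle; the only points needing care are confirming that the jump sum $\sum_{\gamma\in\mathcal{F}_K}$ in the definitions is read over interior faces so that it lines up with $\sum_{\gamma\in\hat{\mathcal{F}}_K}$ in \eqref{ass2lb}, and checking that one common constant $C$ can indeed be used across each state/pressure and adjoint--velocity/pressure pair, which holds because the two members of each pair share the same residual indicator.
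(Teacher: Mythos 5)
Your proposal is correct and follows essentially the same route as the paper, which simply asserts that the theorem "follows from \eqref{ypglobaleff} and \eqref{wqglobaleff}"; you have filled in exactly the bookkeeping the authors leave implicit (identifying the right--hand sides of \eqref{ypglobaleff} and \eqref{wqglobaleff} with $\sum_K\tilde{\eta}_{\ysf,K}^2$ and $\sum_K\tilde{\eta}_{\wsf,K}^2$, absorbing the hidden constants into $C$, and matching \eqref{ass2lb} term by term with $\hat{\T}_K=\{K\}$). Your two points of care --- reading the face sums over interior faces only, and reading the global quantities as $\eta_{\ysf}=(\sum_K\eta_{\ysf,K}^2)^{1/2}$ rather than the literal (typographically garbled) equalities in the statement --- are the right resolutions of minor notational slips in the paper.
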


\section{Numerical examples}\label{sec:numex}

We performed numerical examples using the approximation method described in section \ref{particular} with $\tau_{K}=h_K^2$ for all $K\in\T$ and $\tau_{\gamma}=1$ for all $\gamma\in\mathcal{F}_{I}$. We considered $\vartheta=1$ and $\varrho=1$. The number of degrees of freedom $\textrm{Ndof}=2dN_v+(d+2)N_e$, where $N_v$ is the number of vertices in the mesh and $N_e$ is the number of elements in the mesh.

\subsection{Two dimensional examples}

We perform two dimensional examples on polygonal domains for which the value of a $\beta$ satisfying \eqref{inf-sup} is known. After obtaining the approximate solution, the a posteriori error estimator $\Upsilon$ from Theorem \ref{th:global_reliability} was computed with the aid of Theorems \ref{th:fcyp} and \ref{th:fcwq}. We note that the estimator $\Upsilon$ provides a guaranteed upper bound on $\norm{(\boldsymbol{\esf}_{\ysf},\esf_{\psf},\boldsymbol{\esf}_{\wsf},\esf_{\qsf},\boldsymbol{\esf}_{\usf})}_{\Omega}$. The local error indicators $\Upsilon_K$ from Theorem \ref{th:global_reliability2} were also computed, again with the aid of Theorems \ref{th:fcyp} and \ref{th:fcwq}. Each mesh $\T$ was adaptively refined by marking for refinement the elements $K\in\T$ that were such that $\Upsilon_K^2\ge N_e^{-1} \sum_{K'\in\T}\Upsilon_{K'}^2$. In this way a sequence of adaptively refined meshes was generated from the initial meshes shown in Figure \ref{Fig:initial2D}.

\begin{figure}[!htbp]
\begin{center}
\scalebox{0.2}{\includegraphics{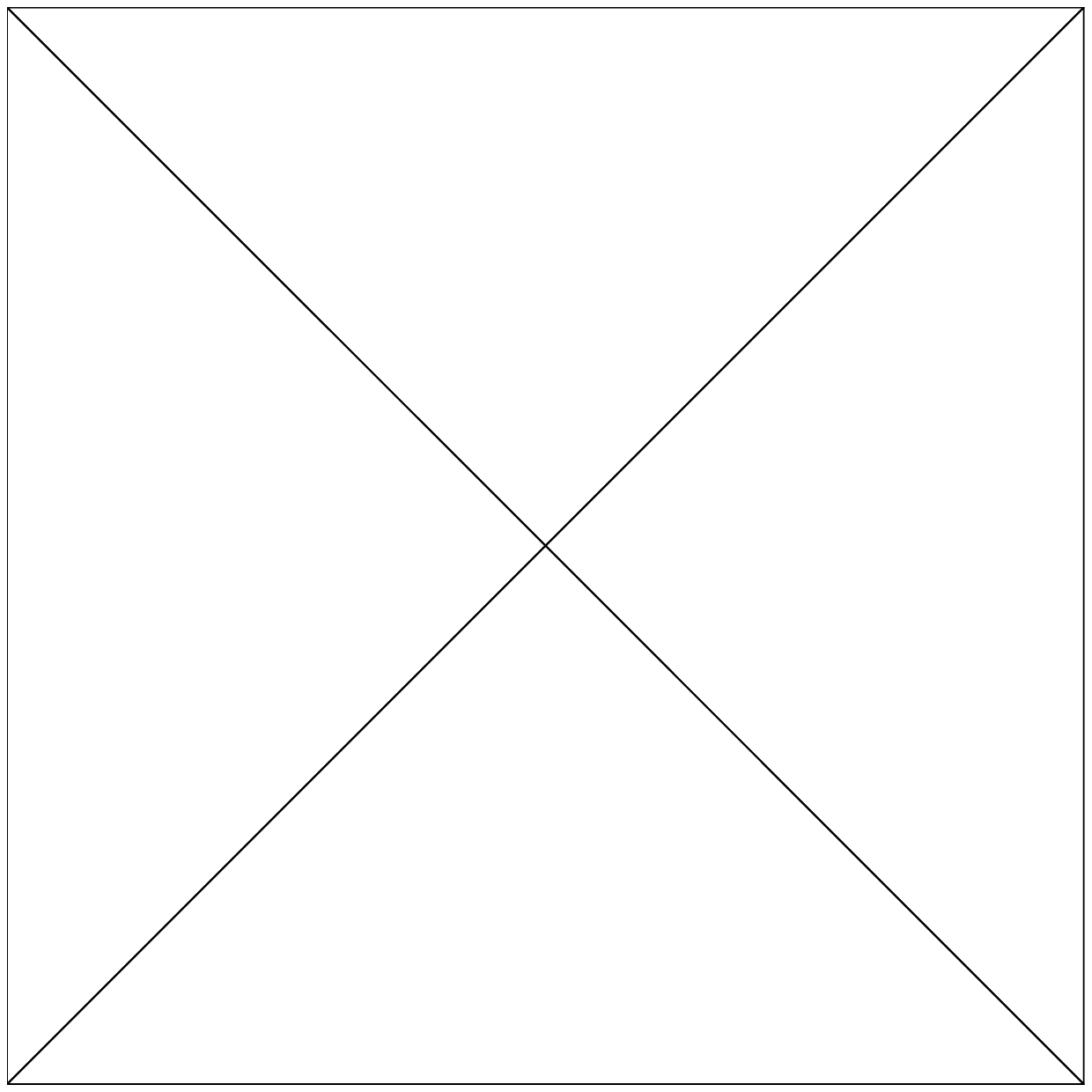}}
\scalebox{0.2}{\includegraphics{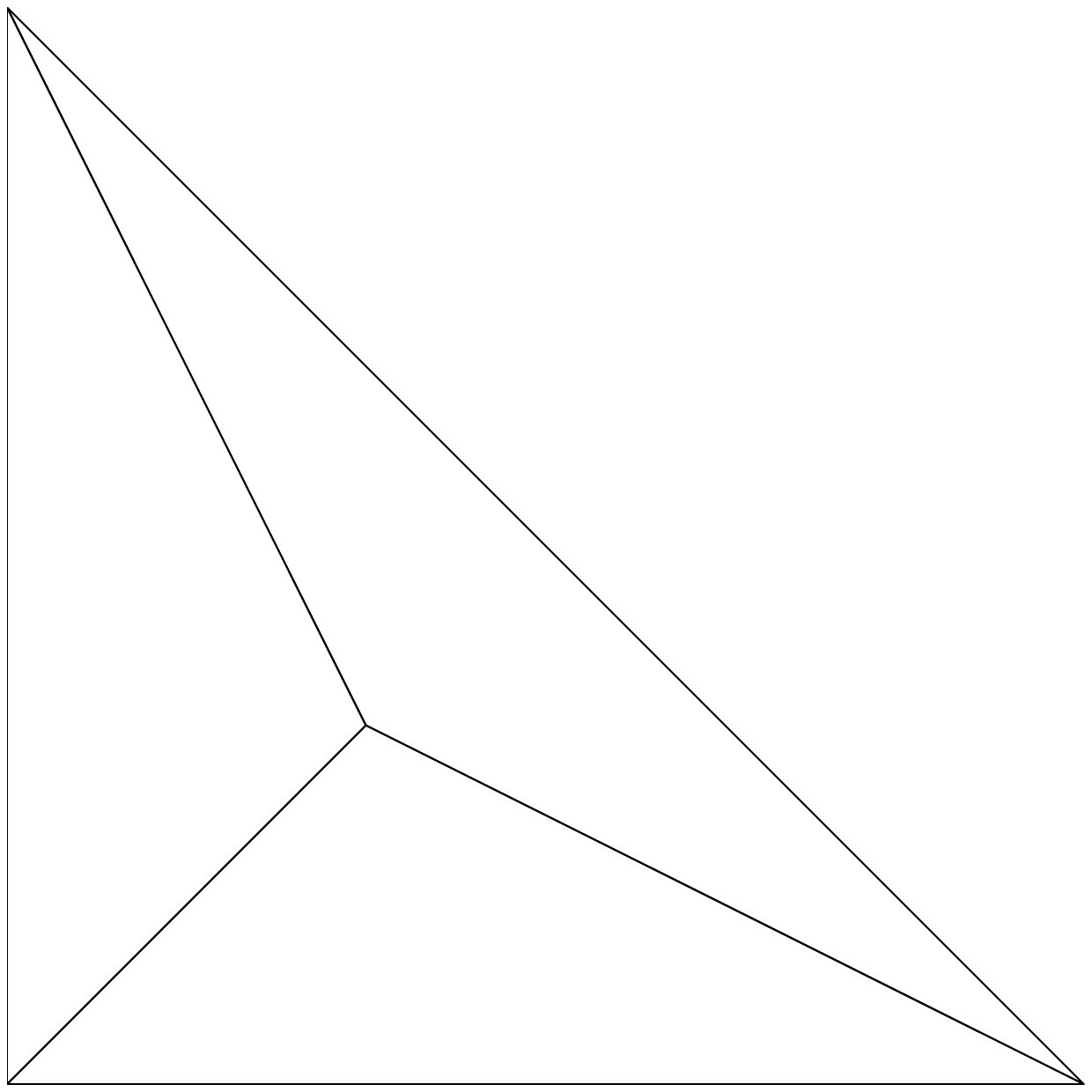}}
\scalebox{0.2}{\includegraphics{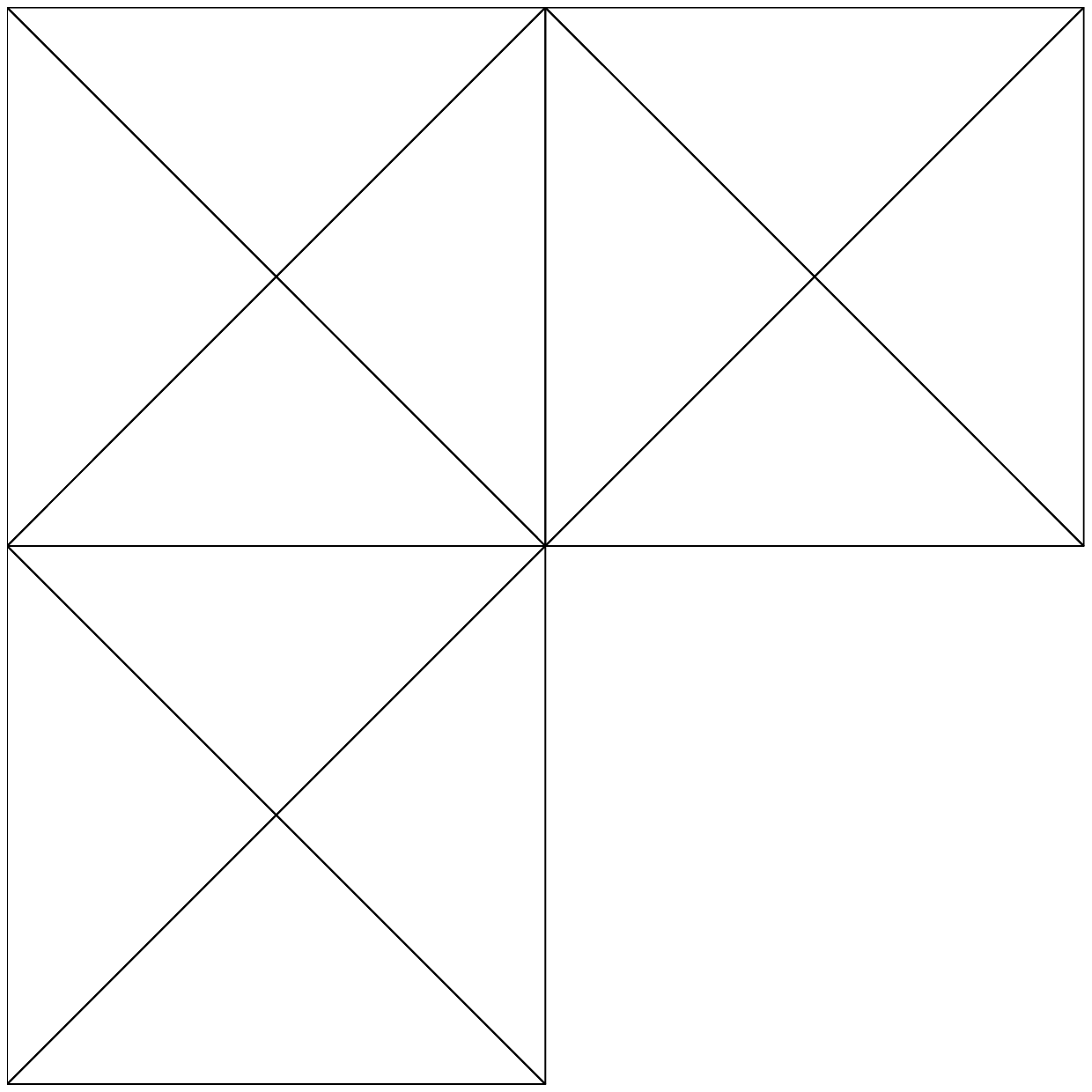}}
\scalebox{0.2}{\includegraphics{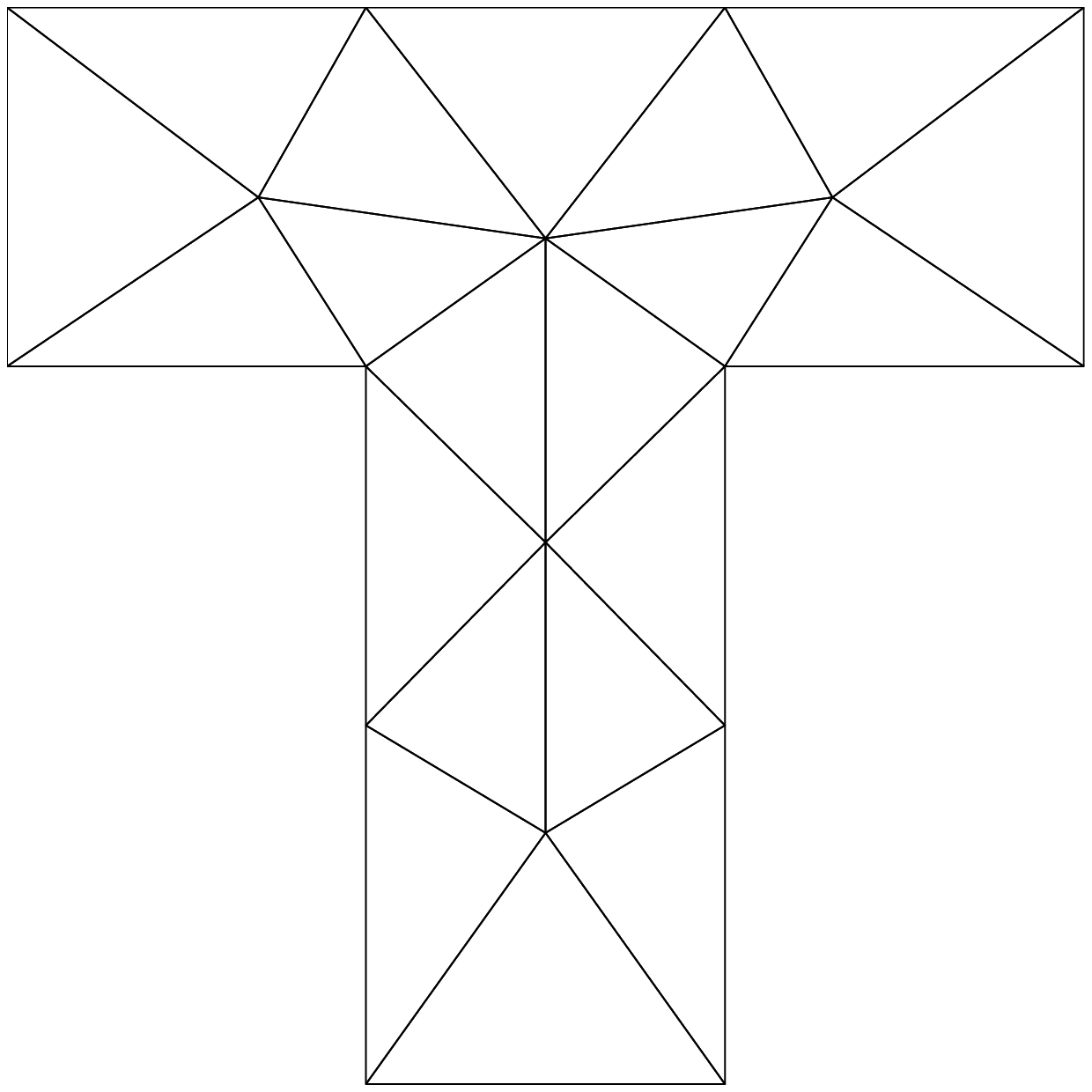}}
\end{center}
\caption{The initial meshes used for Examples \ref{bubble2D}, \ref{layer2D}, \ref{L2D} and \ref{T2D}.}
\label{Fig:initial2D}
\end{figure}

\begin{example}\label{bubble2D}
We consider the square domain $\Omega=(0,1)^2$. From \cite{MR1713178} we have that \eqref{inf-sup} holds with $\beta=\sin(\pi/8)$. We took $\varepsilon=1$, $\csf(x_1,x_2)=(x_2,-x_1)$, $\kappa=1$, $\aasf=(-0.5,-0.5)$ and $\bbsf=(0.5,0.5)$. The data $\fsf$ and $\ysf_{\Omega}$ were chosen to be such that
\[
\bar{\ysf}(x_1,x_2)=\mathbf{curl}\left((x_1(1-x_1)x_2(1-x_2))^2\right), \quad \bar{\psf}(x_1,x_2)=\cos(2\pi x_1)\cos(2\pi x_2),
\]
\[
\bar{\wsf}(x_1,x_2)=\mathbf{curl}\left((\sin(2\pi x_1)\sin(2\pi x_2))^2\right), \quad \bar{\qsf}(x_1,x_2)=\sin(2\pi x_1)\sin(2\pi x_2).
\]
The results are shown in Figure \ref{Fig:bubble2D}. We observe that the error $\norm{(\boldsymbol{\esf}_{\ysf},\esf_{\psf},\boldsymbol{\esf}_{\wsf},\esf_{\qsf},\boldsymbol{\esf}_{\usf})}_{\Omega}$ and the estimator $\Upsilon$ are decreasing at the optimal rate.
\end{example}

\begin{figure}[!htbp]
\begin{center}
\psfrag{Ndof}{\huge Ndof}
\psfrag{Total error eyepeweqeu}{\LARGE $\norm{(\boldsymbol{\esf}_{\ysf},\esf_{\psf},\boldsymbol{\esf}_{\wsf},\esf_{\qsf},\boldsymbol{\esf}_{\usf})}_{\Omega}$}
\psfrag{Total estimator}{\LARGE $\Upsilon$}
\psfrag{Optimal rate}{\LARGE Ndof$^{-1/2}$}
\scalebox{0.4}{\includegraphics{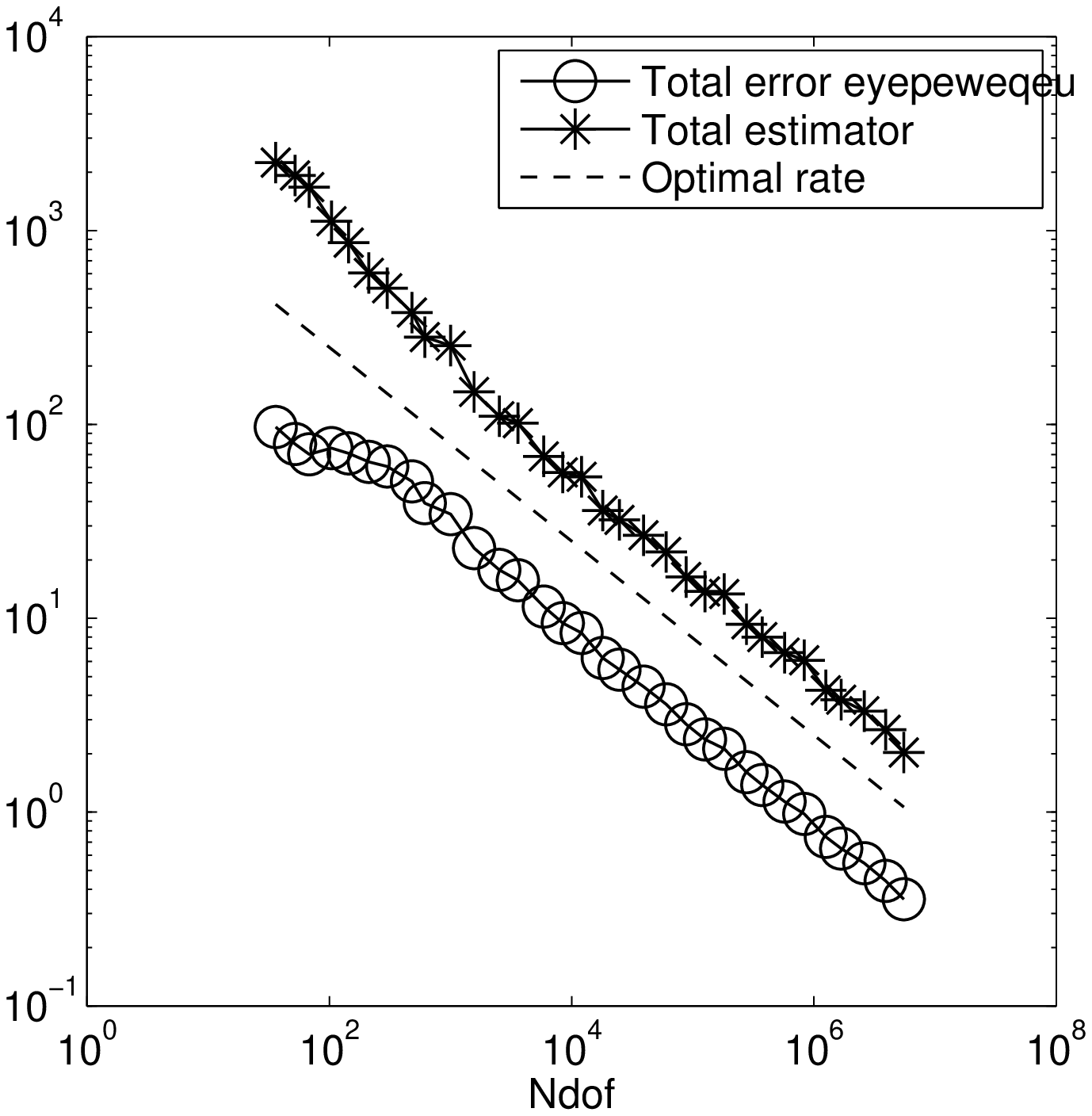}}
\scalebox{0.4}{\includegraphics{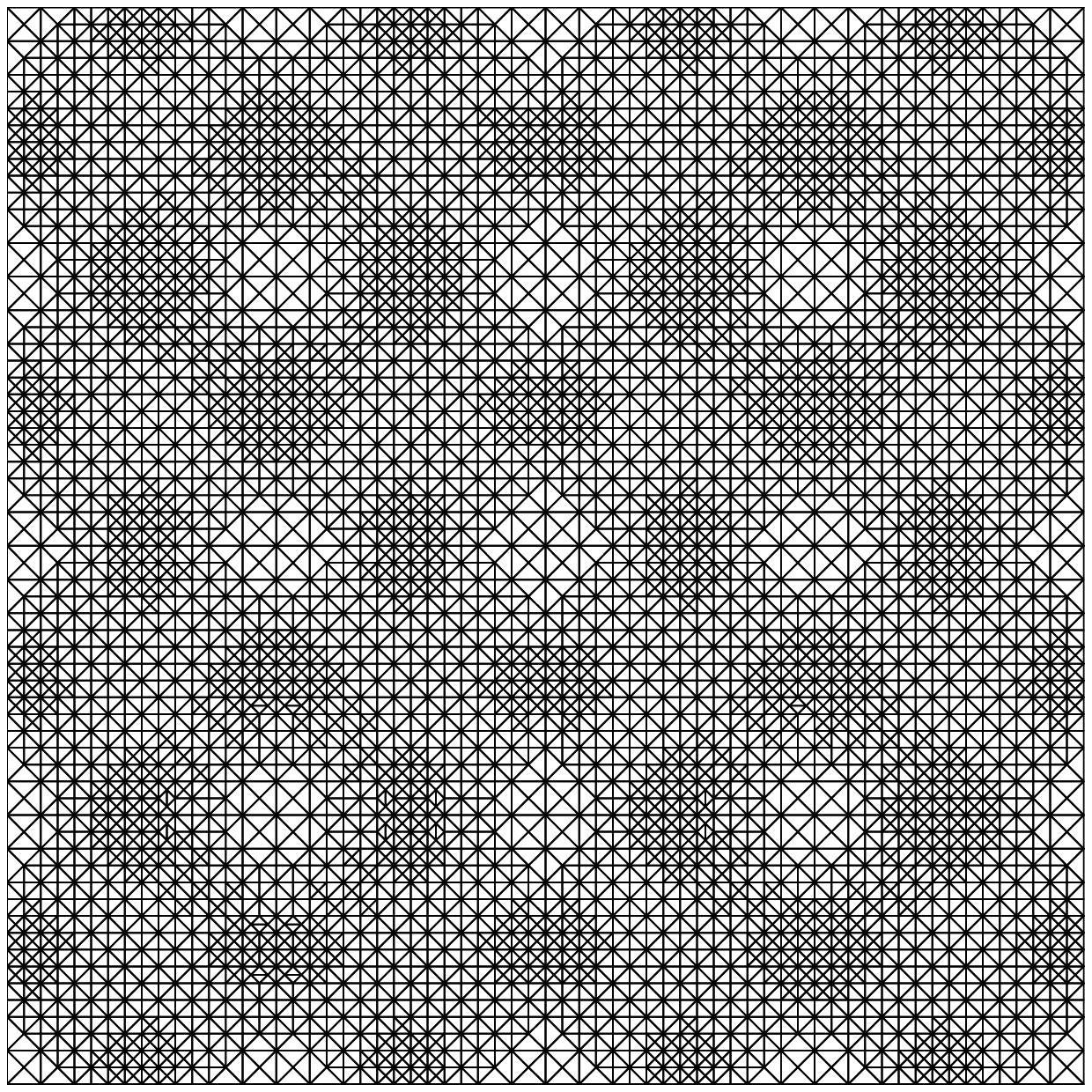}}
\end{center}
\caption{Example \ref{bubble2D}: The error $\norm{(\boldsymbol{\esf}_{\ysf},\esf_{\psf},\boldsymbol{\esf}_{\wsf},\esf_{\qsf},\boldsymbol{\esf}_{\usf})}_{\Omega}$ and estimator $\Upsilon$ (left) and the 19th adaptively refined mesh (right).}
\label{Fig:bubble2D}
\end{figure}

\begin{example}\label{layer2D}
We consider the triangular domain $\Omega=\{(x_1,x_2):x_1>0,x_2>0,x_1+x_2<1\}$. From \cite{MR1713178} we have that \eqref{inf-sup} holds with $\beta=\sin(\pi/16)$. We took $\varepsilon=0.01$, $\csf=(0,0)$, $\kappa=1$, $\aasf=(0,0)$ and $\bbsf=(0.1,0.1)$. The data $\fsf$ and $\ysf_{\Omega}$ were chosen to be such that
\[
\bar{\ysf}(x_1,x_2)=\mathbf{curl}\left(x_1x_2^2(1-x_1-x_2)^2
\left(1-x_1-\frac{\exp(-100x_1)-\exp(-100)}{1-\exp(-100)}\right)\right),
\]
\[
\bar{\psf}(x_1,x_2)=\cos(2\pi x_2)/1024,
\]
\[
\bar{\wsf}(x_1,x_2)=\mathbf{curl}\left(x_1^2x_2(1-x_1-x_2)^2
\left(1-x_2-\frac{\exp(-100x_2)-\exp(-100)}{1-\exp(-100)}\right)\right),
\]
and
\[
\bar{\qsf}(x_1,x_2)=\cos(2\pi x_1)/1024.
\]
The results are shown in Figure \ref{Fig:layer2D}. We observe that, once the mesh has been sufficiently refined, the error $\norm{(\boldsymbol{\esf}_{\ysf},\esf_{\psf},\boldsymbol{\esf}_{\wsf},\esf_{\qsf},\boldsymbol{\esf}_{\usf})}_{\Omega}$ and the estimator $\Upsilon$ decrease at the optimal rate. We also observe that more refinement has been performed in the regions where the solution has boundary layers.
\end{example}

\begin{figure}[!htbp]
\begin{center}
\psfrag{Ndof}{\huge Ndof}
\psfrag{Total error eyepeweqeu}{\LARGE $\norm{(\boldsymbol{\esf}_{\ysf},\esf_{\psf},\boldsymbol{\esf}_{\wsf},\esf_{\qsf},\boldsymbol{\esf}_{\usf})}_{\Omega}$}
\psfrag{Total estimator}{\LARGE $\Upsilon$}
\psfrag{Optimal rate}{\LARGE Ndof$^{-1/2}$}
\scalebox{0.4}{\includegraphics{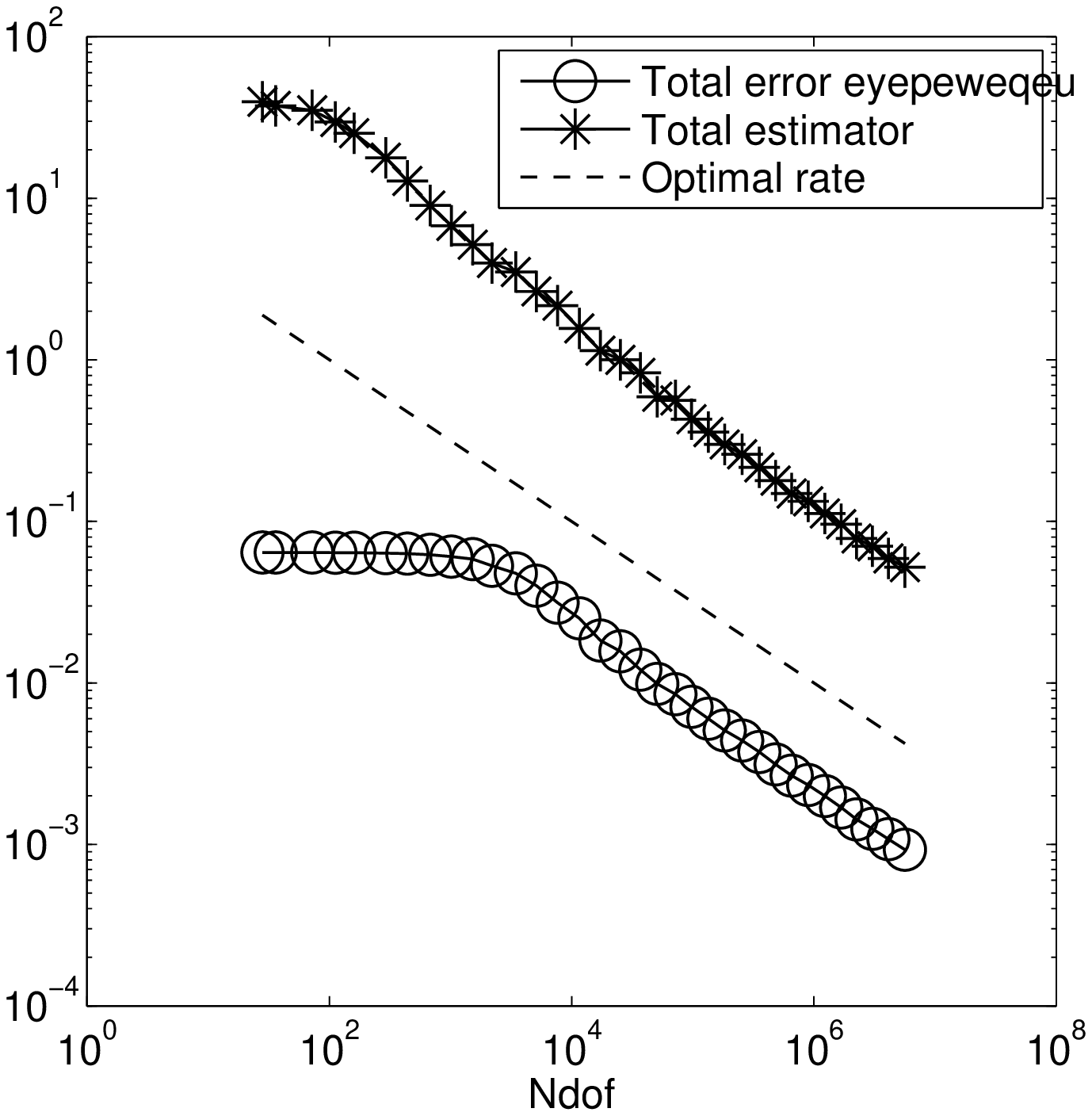}}
\scalebox{0.4}{\includegraphics{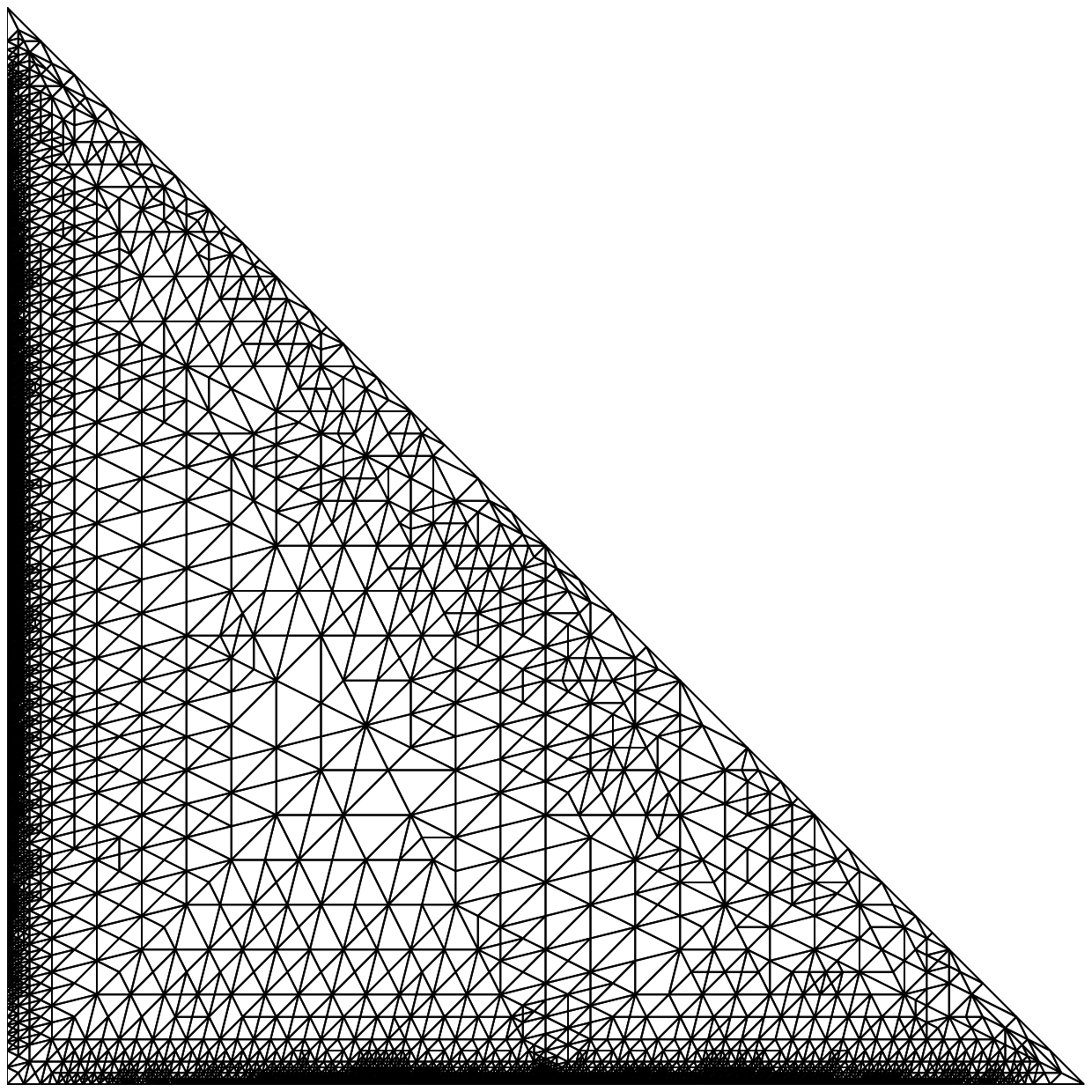}}
\end{center}
\caption{Example \ref{layer2D}: The error $\norm{(\boldsymbol{\esf}_{\ysf},\esf_{\psf},\boldsymbol{\esf}_{\wsf},\esf_{\qsf},\boldsymbol{\esf}_{\usf})}_{\Omega}$ and estimator $\Upsilon$ (left) and the 19th adaptively refined mesh (right).}
\label{Fig:layer2D}
\end{figure}

\begin{example}\label{L2D}
We consider the L-shaped domain $\Omega=(-1,1)^2\setminus([0,1)\times(-1,0])$. From \cite{MR1713178} we have that \eqref{inf-sup} holds with $\beta=0.1601$. We took $\varepsilon=1$, $\csf=(0,0)$, $\kappa=0$, $\aasf=(0,0)$, $\bbsf=(1,1)$, $\fsf=(1,1)$ and $\ysf_{\Omega}(x_1,x_2)=(x_2,-x_1)$. The results are shown in Figure \ref{Fig:L2D}. We observe that the estimator $\Upsilon$ decreases at the optimal rate and that more refinement is being performed in regions close to the reentrant corner. The true solution to this problem is unknown and hence we cannot compute $\norm{(\boldsymbol{\esf}_{\ysf},\esf_{\psf},\boldsymbol{\esf}_{\wsf},\esf_{\qsf},\boldsymbol{\esf}_{\usf})}_{\Omega}$. However, from Theorem \ref{th:global_reliability} we know that $\norm{(\boldsymbol{\esf}_{\ysf},\esf_{\psf},\boldsymbol{\esf}_{\wsf},\esf_{\qsf},\boldsymbol{\esf}_{\usf})}_{\Omega}\le\Upsilon$.
\end{example}

\begin{figure}[!htbp]
\begin{center}
\psfrag{Ndof}{\huge Ndof}
\psfrag{Estimator}{\LARGE $\Upsilon$}
\psfrag{Optimal rate}{\LARGE Ndof$^{-1/2}$}
\scalebox{0.4}{\includegraphics{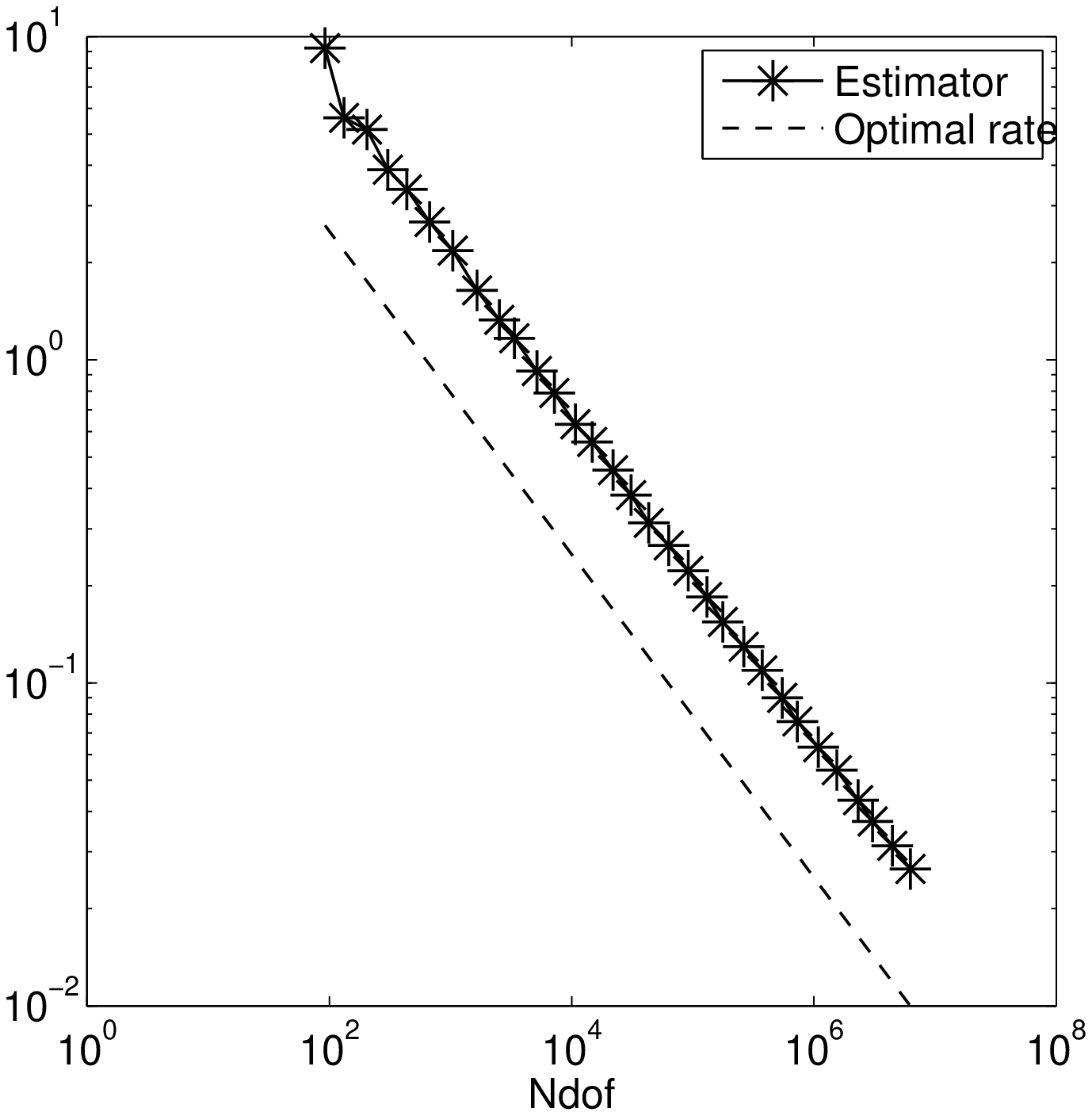}}
\scalebox{0.4}{\includegraphics{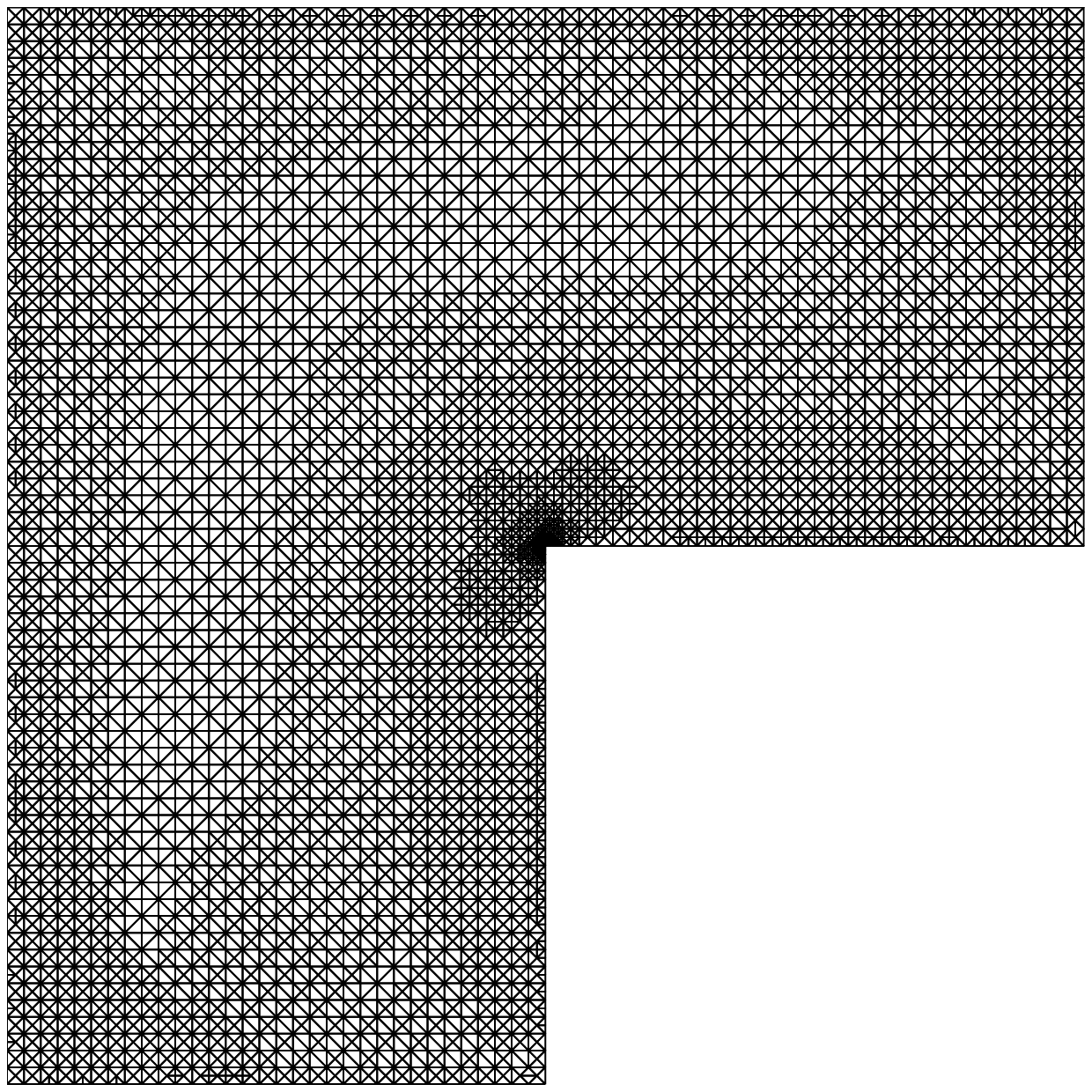}}
\end{center}
\caption{Example \ref{L2D}: The estimator $\Upsilon$ (left) and the 17th adaptively refined mesh (right).}
\label{Fig:L2D}
\end{figure}

\begin{example}\label{T2D}
We considered the same problem as in the previous example with the exception that we took the domain to be the T-shaped domain $\Omega=((-1.5,1.5)\times(0,1))\cup((-0.5,0.5)\times(-2,0])$ on which we have that \eqref{inf-sup} holds with $\beta=0.1076$ from \cite{MR1713178}. The results are shown in Figure \ref{Fig:T2D}. Similar observations to those made about the previous example can be made.
\end{example}

\begin{figure}[!htbp]
\begin{center}
\psfrag{Ndof}{\huge Ndof}
\psfrag{Estimator}{\LARGE $\Upsilon$}
\psfrag{Optimal rate}{\LARGE Ndof$^{-1/2}$}
\scalebox{0.4}{\includegraphics{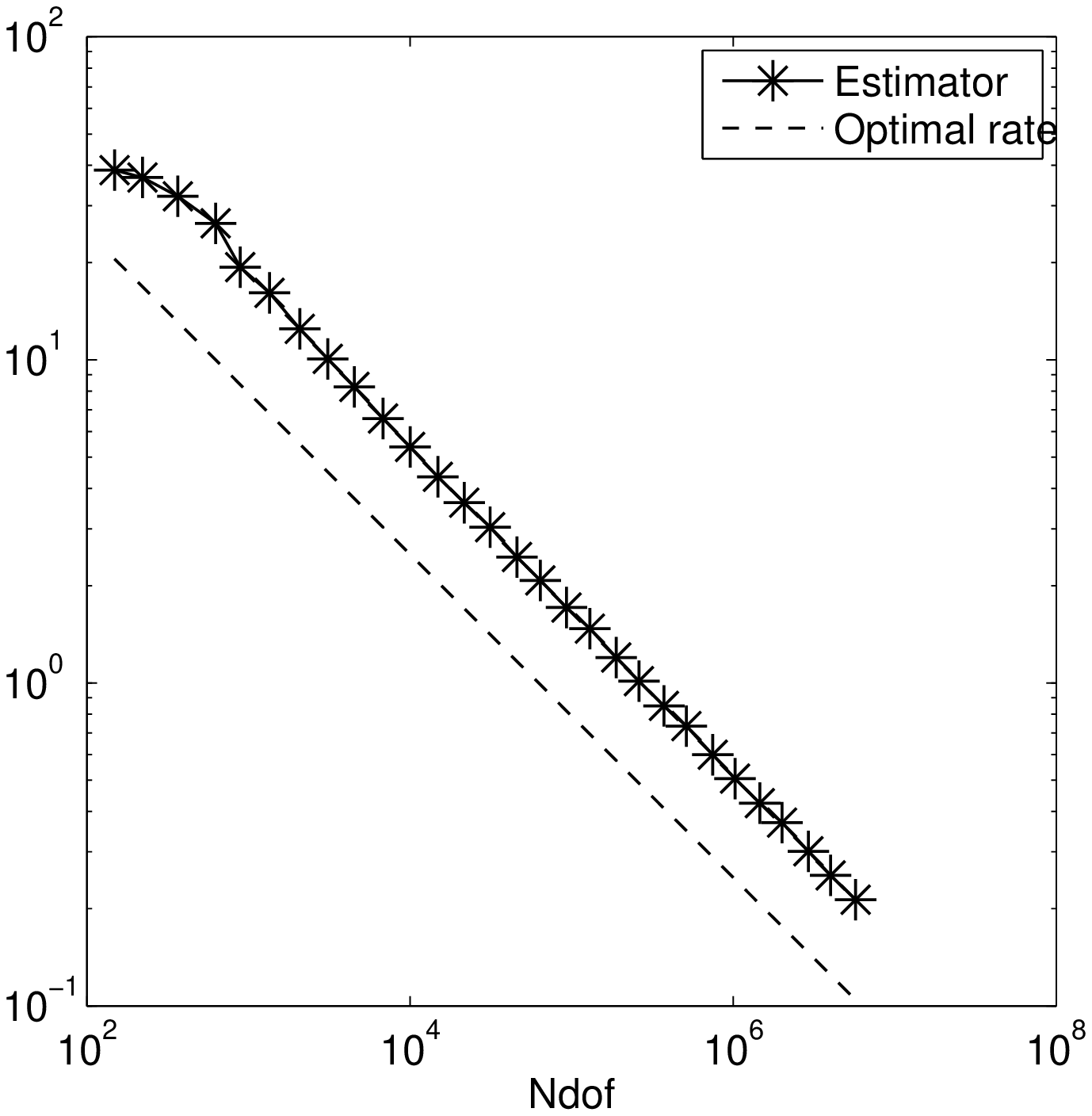}}
\scalebox{0.4}{\includegraphics{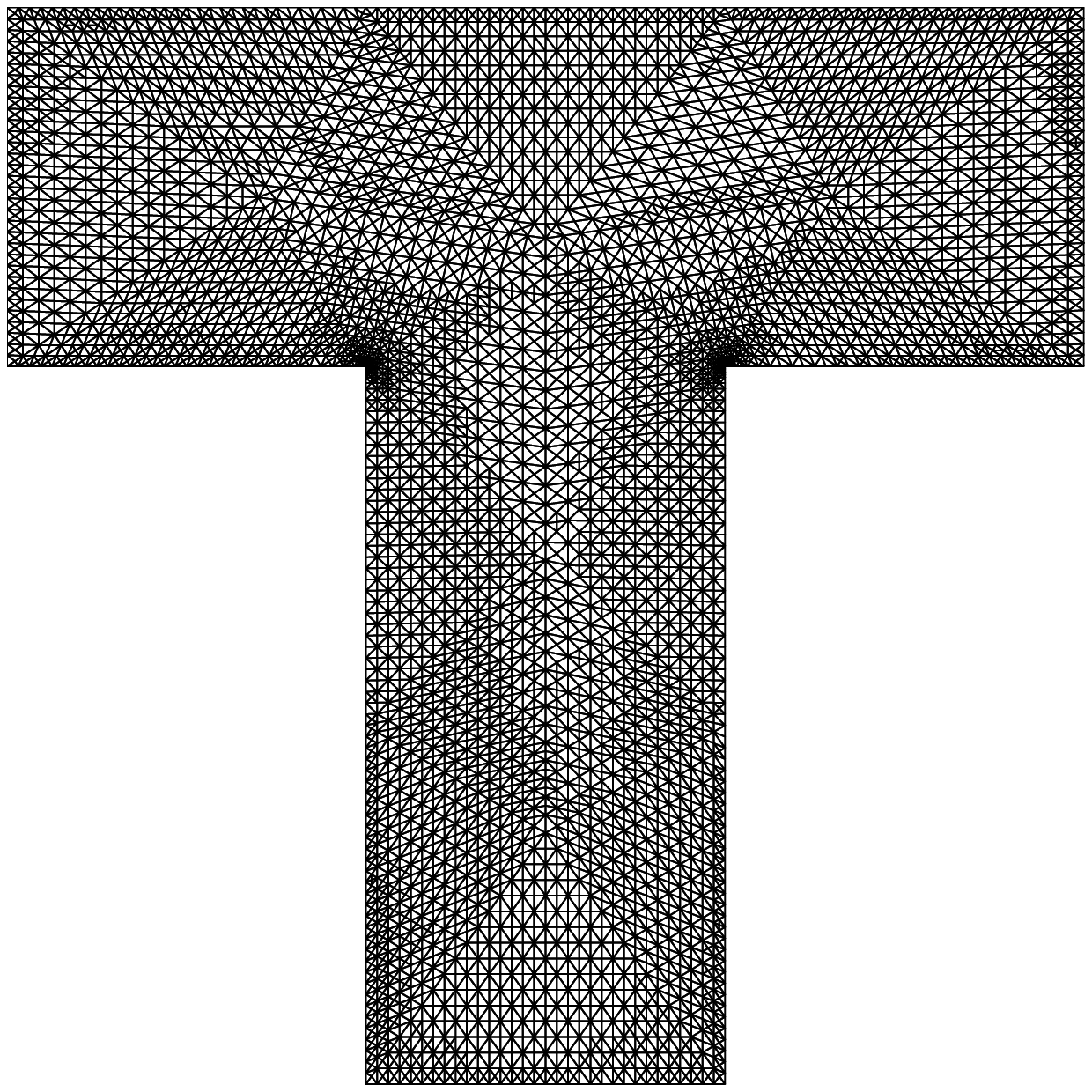}}
\end{center}
\caption{Example \ref{T2D}: The estimator $\Upsilon$ (left) and the 15th adaptively refined mesh (right).}
\label{Fig:T2D}
\end{figure}

\subsection{Three dimensional examples}

Unfortunately, we are not aware of any polyhedral domains for which the value of a $\beta$ satisfying \eqref{inf-sup} is known. Hence, when the domain is three dimensional, the estimator $\Upsilon$ from Theorem \ref{th:global_reliability} and the local error indicators $\Upsilon_K$ from Theorem \ref{th:global_reliability2} are not computable. Consequently, after obtaining the approximate solution, the a posteriori error estimator $\tilde{\Upsilon}$ and the local error indicators $\tilde{\Upsilon}_K$ from Theorem \ref{th:global_reliability3} were computed, with the aid of Theorem \ref{th:rbypwq}. Each mesh $\T$ was adaptively refined by marking for refinement the elements $K\in\T$ that were such that $\tilde{\Upsilon}_K^2\ge N_e^{-1} \sum_{K'\in\T}\tilde{\Upsilon}_{K'}^2$. In this way a sequence of adaptively refined meshes was generated from the initial meshes shown in Figure \ref{Fig:initial3D}. We note that we have not proved that the estimator $\tilde{\Upsilon}$ provides a guaranteed upper bound on $\norm{(\boldsymbol{\esf}_{\ysf},\esf_{\psf},\boldsymbol{\esf}_{\wsf},\esf_{\qsf},\boldsymbol{\esf}_{\usf})}_{\Omega}$. However, from Theorem \ref{th:global_reliability3} we know that $\norm{(\boldsymbol{\esf}_{\ysf},\esf_{\psf},\boldsymbol{\esf}_{\wsf},\esf_{\qsf},\boldsymbol{\esf}_{\usf})}_{\Omega}\lesssim\tilde{\Upsilon}$.

\begin{figure}[!htbp]
\begin{center}
\scalebox{0.2}{\includegraphics{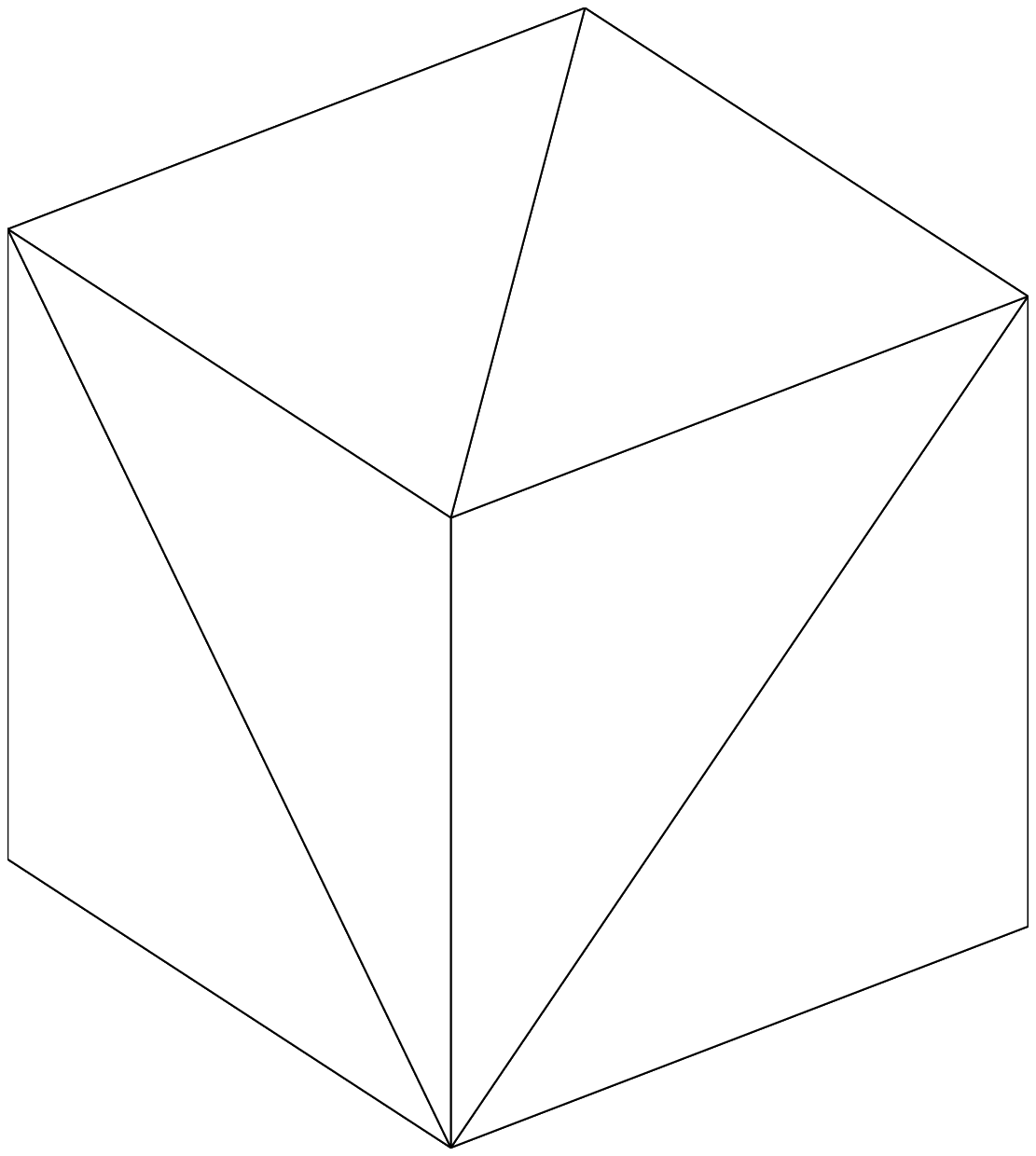}}
\scalebox{0.2}{\includegraphics{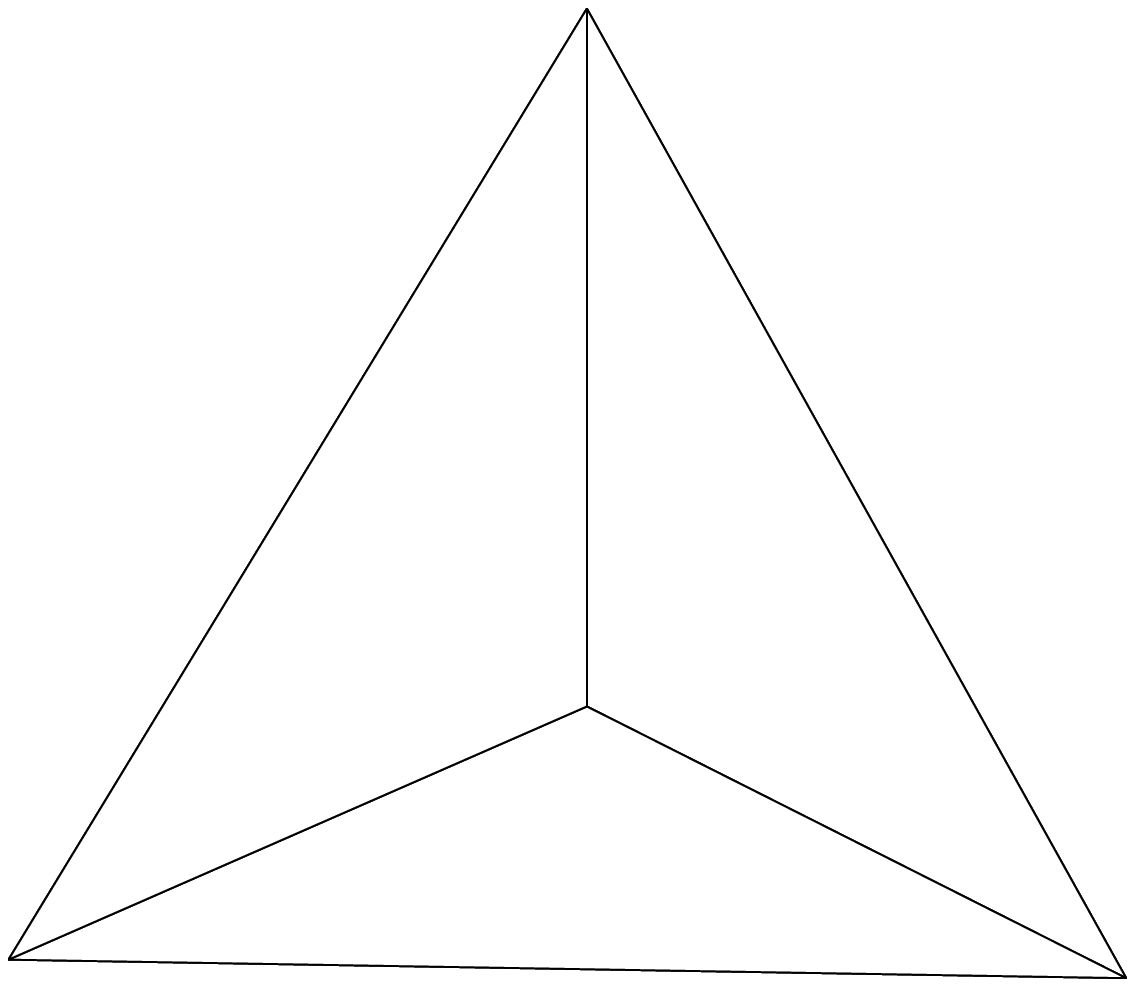}}
\end{center}
\caption{Exterior views of the initial meshes used for Examples \ref{bubble3D} and \ref{layer3D}.}
\label{Fig:initial3D}
\end{figure}

\begin{example}\label{bubble3D}
We consider the cuboidal domain $\Omega=(0,1)^3$. We took $\varepsilon=1$, $\csf(x_1,x_2,x_3)=(x_2-x_3, x_3-x_1, x_1-x_2)$, $\kappa=1$, $\aasf=(-0.5,-0.5,-0.5)$ and $\bbsf=(0.5,0.5,0.5)$. The data $\fsf$ and $\ysf_{\Omega}$ were chosen to be such that
\[
\bar{\ysf}(x_1,x_2,x_3)=\mathbf{curl}\left((x_1(1-x_1)x_2(1-x_2)x_3(1-x_3))^2\right), \, \,  \bar{\psf}(x_1,x_2,x_3)=\cos(2\pi x_3),
\]
\[
\bar{\wsf}(x_1,x_2,x_3)=\mathbf{curl}\left((\sin(2\pi x_1)\sin(2\pi x_2)\sin(2\pi x_3))^2\right), \, \,  \bar{\qsf}(x_1,x_2,x_3)=\sin(2\pi x_3).
\]
The results are shown in Figure \ref{Fig:bubble3D}. We observe that the error $\norm{(\boldsymbol{\esf}_{\ysf},\esf_{\psf},\boldsymbol{\esf}_{\wsf},\esf_{\qsf},\boldsymbol{\esf}_{\usf})}_{\Omega}$ and the estimator $\tilde{\Upsilon}$ are decreasing at the optimal rate.
\end{example}

\begin{figure}[!htbp]
\begin{center}
\psfrag{Ndof}{\huge Ndof}
\psfrag{Errors: eyepeweqeu}{\LARGE $\norm{(\boldsymbol{\esf}_{\ysf},\esf_{\psf},\boldsymbol{\esf}_{\wsf},\esf_{\qsf},\boldsymbol{\esf}_{\usf})}_{\Omega}$}
\psfrag{Total estimator}{\LARGE $\tilde{\Upsilon}$}
\psfrag{Optimal rate}{\LARGE Ndof$^{-1/3}$}
\scalebox{0.4}{\includegraphics{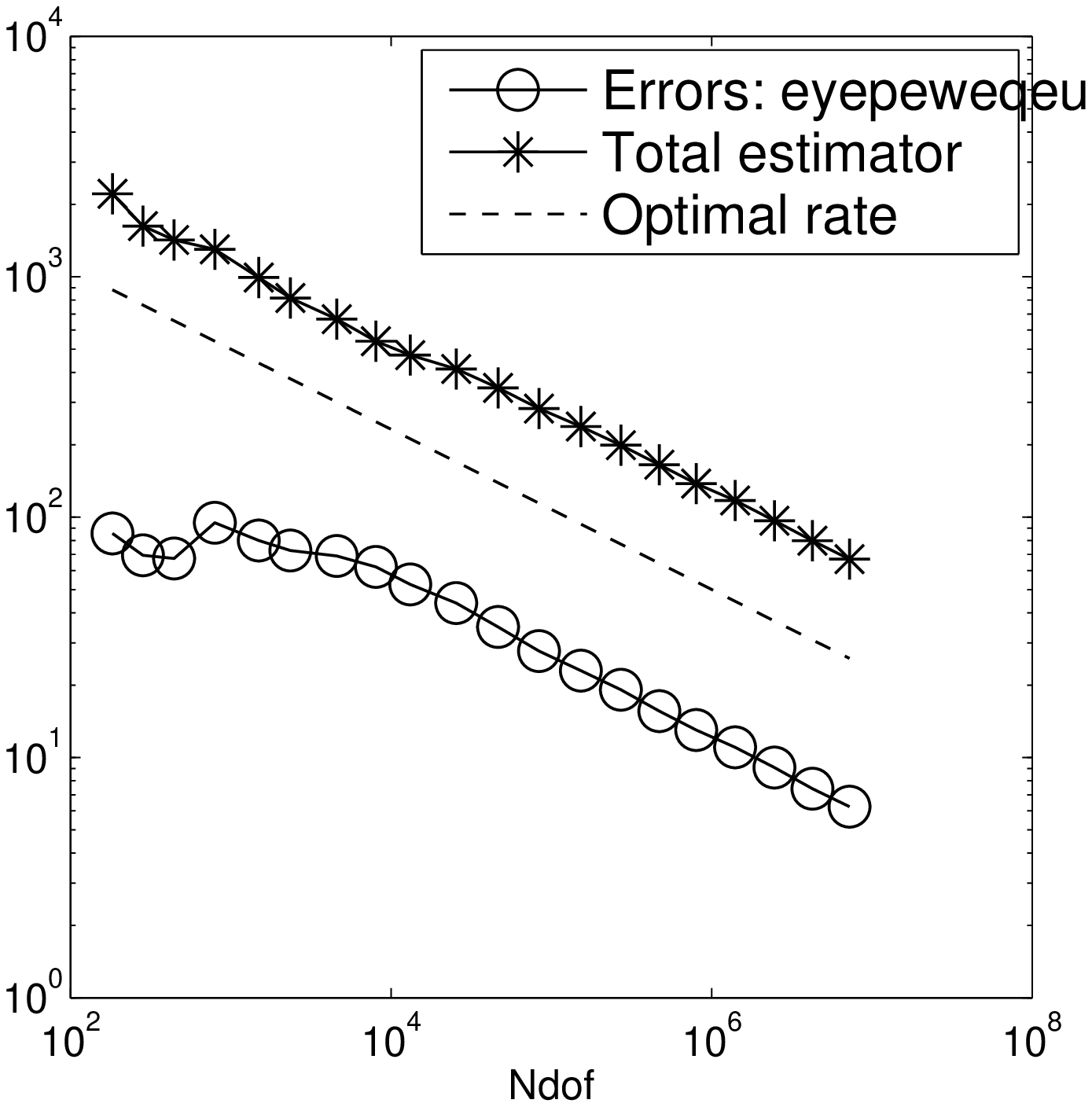}}
\scalebox{0.4}{\includegraphics{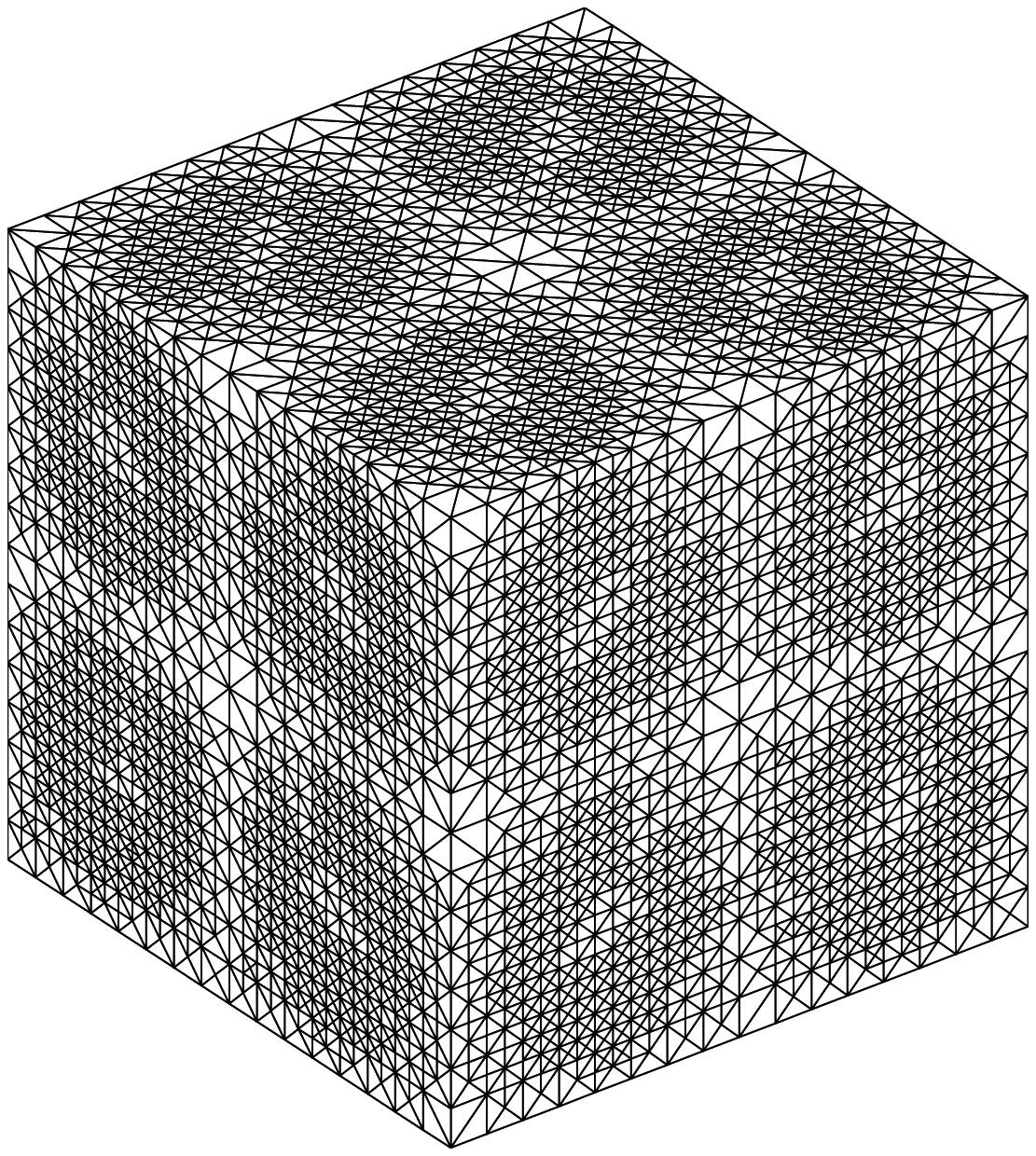}}
\end{center}
\caption{Example \ref{bubble3D}: The error $\norm{(\boldsymbol{\esf}_{\ysf},\esf_{\psf},\boldsymbol{\esf}_{\wsf},\esf_{\qsf},\boldsymbol{\esf}_{\usf})}_{\Omega}$ and estimator $\tilde{\Upsilon}$ (left) and an exterior view of the 19th adaptively refined mesh (right).}
\label{Fig:bubble3D}
\end{figure}

\begin{example}\label{layer3D}
We consider the tetrahedral domain $\Omega=\{(x_1,x_2,x_3):x_1>0,x_2>0,x_3>0,x_1+x_2+x_3<1\}$. We took $\varepsilon=0.01$, $\csf=(1,1,1)$, $\kappa=0$, $\aasf=(0,0,0)$ and $\bbsf=(0.1,0.1,0.1)$. The data $\fsf$ and $\ysf_{\Omega}$ were chosen to be such that
\[
\bar{\ysf}(x_1,x_2,x_3)=\mathbf{curl}\left(x_1x_2^2\chi \left(1-x_1-\frac{\exp(-100x_1)-\exp(-100)}{1-\exp(-100)}\right)\right),
\]
\[
\bar{\psf}(x_1,x_2,x_3)=\left(\cos(2\pi  z)-3/(2\pi^2)\right)/1024,
\]
\[
\bar{\wsf}(x_1,x_2,x_3)=\mathbf{curl}\left(x_1^2x_2\chi \left(1-x_2-\frac{\exp(-100x_2)-\exp(-100)}{1-\exp(-100)}\right)\right),
\]
and
\[
\bar{\qsf}(x_1,x_2,x_3)=\left(\sin(2\pi z)-3/(2\pi)\right)/1024,
\]
where $\chi=x_3^2(1-x_1-x_2-x_3)^2$. The results are shown in Figure \ref{Fig:layer3D}. We observe that, once the mesh has been sufficiently refined, the error $\norm{(\boldsymbol{\esf}_{\ysf},\esf_{\psf},\boldsymbol{\esf}_{\wsf},\esf_{\qsf},\boldsymbol{\esf}_{\usf})}_{\Omega}$ and the estimator $\tilde{\Upsilon}$ decrease at the optimal rate. We also observe that more refinement has been performed in the regions where the solution has boundary layers.
\end{example}

\begin{figure}[!htbp]
\begin{center}
\psfrag{Ndof}{\huge Ndof}
\psfrag{Errors: eyepeweqeu}{\LARGE $\norm{(\boldsymbol{\esf}_{\ysf},\esf_{\psf},\boldsymbol{\esf}_{\wsf},\esf_{\qsf},\boldsymbol{\esf}_{\usf})}_{\Omega}$}
\psfrag{Total estimator}{\LARGE $\tilde{\Upsilon}$}
\psfrag{Optimal rate}{\LARGE Ndof$^{-1/3}$}
\scalebox{0.4}{\includegraphics{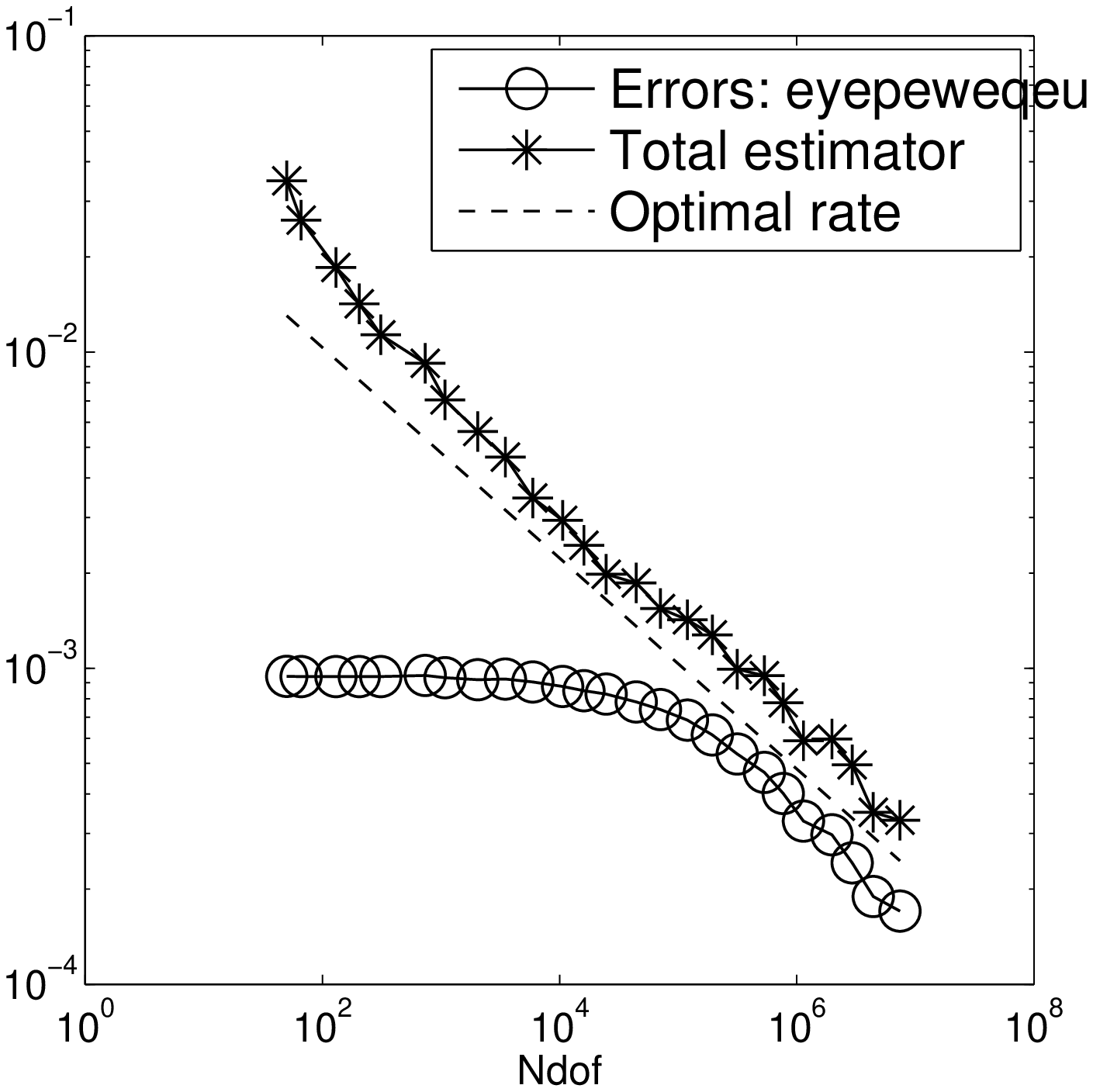}}
\scalebox{0.4}{\includegraphics{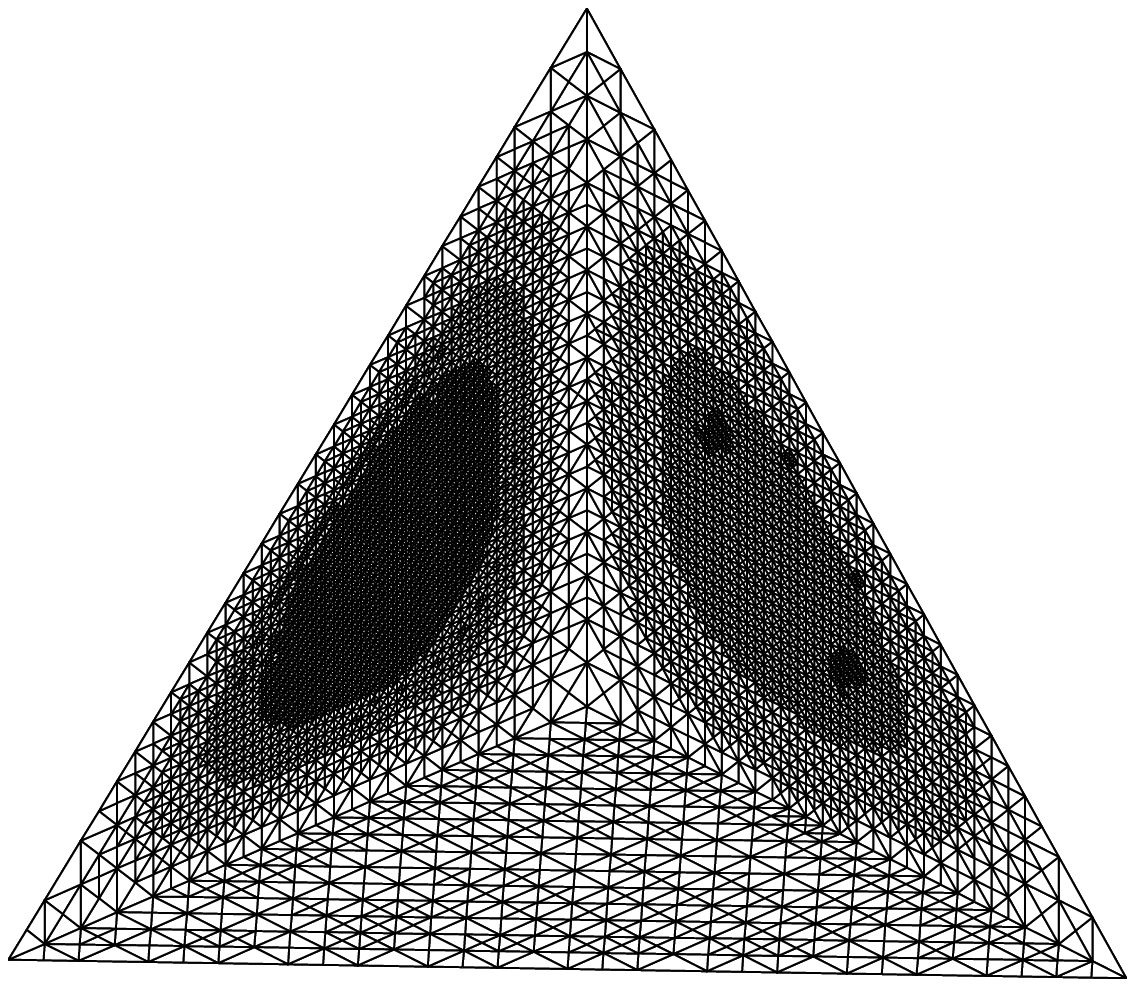}}
\end{center}
\caption{Example \ref{layer3D}: The error $\norm{(\boldsymbol{\esf}_{\ysf},\esf_{\psf},\boldsymbol{\esf}_{\wsf},\esf_{\qsf},\boldsymbol{\esf}_{\usf})}_{\Omega}$ and estimator $\tilde{\Upsilon}$ (left) and an exterior view of the 19th adaptively refined mesh (right).}
\label{Fig:layer3D}
\end{figure}

\bibliographystyle{siamplain}
\bibliography{bibliography}

\end{document}